\setlist[itemize]{leftmargin=2em}
\newcommand{\N}{\mathbb{N}}
\newcommand{\Z}{\mathbb{Z}}
\newcommand{\R}{\mathbb{R}}
\newcommand{\Q}{\mathbb{Q}}
\newcommand{\al}{\alpha}
\newcommand{\T}{\mathbb{T}}
\newtheorem{Theorem}{Theorem}
\newtheorem{Definition}{Definition}[section]
\newtheorem{Proposition}[Definition]{Proposition}
\newtheorem{Lemma}[Definition]{Lemma}
\theoremstyle{definition}
\newtheorem{Remark}[Definition]{Remark}
\newcommand{\bthm}{\begin{Theorem}}
	\newcommand{\ethm}{\end{Theorem}}
\newcommand{\bpr}{\begin{Proposition}}
	\newcommand{\epr}{\end{Proposition}}
\newcommand{\blm}{\begin{Lemma}}
	\newcommand{\elm}{\end{Lemma}}
\newcommand{\bex}{\begin{Exercise}}
	\newcommand{\eex}{\end{Exercise}}
\newcommand{\be}{\begin{equation}}
	\newcommand{\ee}{\end{equation}}
\newcommand{\beal}{\begin{aligned}}
	\newcommand{\enal}{\end{aligned}}
\newcommand{\brm}{\begin{Remark}}
	\newcommand{\erm}{\end{Remark}}
\begin{document}

		\title[Herman's converse KAM mechanism revisited]{Herman's converse KAM mechanism revisited}
		
		\author{Yi Liu}
		\address{School of Mathematics and Statistics, Beijing Institute of Technology, Beijing 100081, China}
		\email{yiliu111@foxmail.com}
		
		\author{Lin Wang}
		\address{School of Mathematics and Statistics, Beijing Institute of Technology, Beijing 100081, China}
		\email{lwang@bit.edu.cn}

		\subjclass[2010]{Primary 37J40; Secondary 37E40}
		
		\keywords{Herman's converse KAM mechanism, Gevrey regulairy, Invariant circles}

		\begin{abstract}
			In his celebrated counterexample to the KAM theorem, Herman introduced a perturbation of an integrable system consisting of two components: a hyperbolic term and a bump function. He also remarked that it was unclear whether the bump function was truly necessary \cite[p.~78, Remark~4.7.3]{H1}. In this note, we prove that the bump function is indeed necessary when more natural hyperbolic perturbations are considered. The proof of this necessity relies on an improved Siegel--Brjuno estimate and a parameter-dependent renormalization of resonances within the direct KAM method.
			
		\end{abstract}

		\maketitle

		\tableofcontents
		\section{\sc Introduction}
		
		Area-preserving twist maps of the cylinder, introduced by Poincar\'{e}, capture the essential dynamics of the restricted three-body problem. Aubry--Mather theory establishes that, under very general conditions, the invariant sets of such maps are compact and non-empty, and possess a well-defined ``order structure." These invariant sets, known as Aubry--Mather sets, are characterized by the rotation number (also called frequency)---or more generally, the rotation symbol of their orbits. When the frequency is irrational, an Aubry--Mather set may be either an invariant circle (i.e., a homotopically non-trivial invariant curve) or a Cantor set. A celebrated result by Birkhoff \cite{B20} states that for $C^1$ maps, any invariant circle must be the graph of a Lipschitz function. Due to the fundamental importance of invariant circles in dynamics, determining their existence remains one of the central problems in the theory of area-preserving twist maps.
		
		The most significant approach to this problem is the KAM theory, initiated by Kolmogorov, Arnold, and Moser. This theory applies to nearly integrable systems: Herman \cite{H33} proved that if the perturbation of an integrable system is sufficiently small in the $C^3$ topology, then invariant circles with frequencies of constant type persist. This result is optimal, since Herman \cite{H1} also constructed examples showing that for any invariant circle of an integrable system, an arbitrarily small perturbation in the $C^{3-\varepsilon}$ topology can destroy it. Constant-type frequencies are those which are approximated by rationals at the slowest possible rate; if one considers frequencies admitting faster approximation, the required topology for the perturbation can be made finer (see \cite{F, M4}).

		\subsection{Herman's converse KAM mechanism}
		The investigation of invariant circle non-existence constitutes a central theme in converse KAM theory, with Herman's mechanism representing a foundational contribution. Herman's method proceeds through two carefully designed perturbations of an integrable system. {\it The first perturbation generates a hyperbolic periodic orbit, while the second---implemented via a bump function---ensures transverse intersection of the stable and unstable manifolds, a phenomenon known as separatrix splitting. This process creates a stochastic layer of controlled width that destroys invariant circles with specific rotation numbers near the hyperbolic orbit. Through appropriate coordinate transformations, the analysis reduces to studying the dynamics near the hyperbolic fixed point and its homoclinic orbits.} The construction requires precise balancing between the perturbation sizes and the arithmetic properties of the invariant circle's rotation number.
		
		Subsequent work on the breakup of invariant circles for twist maps, and of invariant tori for Hamiltonian systems with multiple degrees of freedom, in the $C^r$ topology ($r \in [0, +\infty] \cup \{\omega\}$), essentially draws inspiration from Herman's mechanism (see, e.g., \cite{Bes,BF,CW,F,M4,W,W15,W22}).
		
		In \cite[Chapter II]{H1}, Herman defined the maps
		\[
		f_{n,a}(x,y) = \left(x + y,\ y + \varphi_{\delta_n}(x + y)\right), \quad \text{where } \delta_n = \frac{1}{n^a},\ a > 1,
		\]
		\[
		F_{n,a}(x,y) = \left(x + y,\ y + \varphi_{\delta_n}(x + y) + \eta_n(x + y)\right).
		\]
		
		In this construction, $a > 1$ is a parameter.  The function $\varphi_{\delta_n}$ is a $C^\infty$ hyperbolic perturbation whose $C^k$-norm is bounded by $\delta_n$ for any $k \geq 0$. To facilitate precise geometric estimates, Herman requires that $\varphi_{\delta_n}$ be linear in a neighborhood of the hyperbolic fixed point. See Fig. \ref{fi31}.  The function $\eta_n$ is a bump function.

		\begin{figure}[htbp]
			\small \centering
			\includegraphics[width=8.0cm]{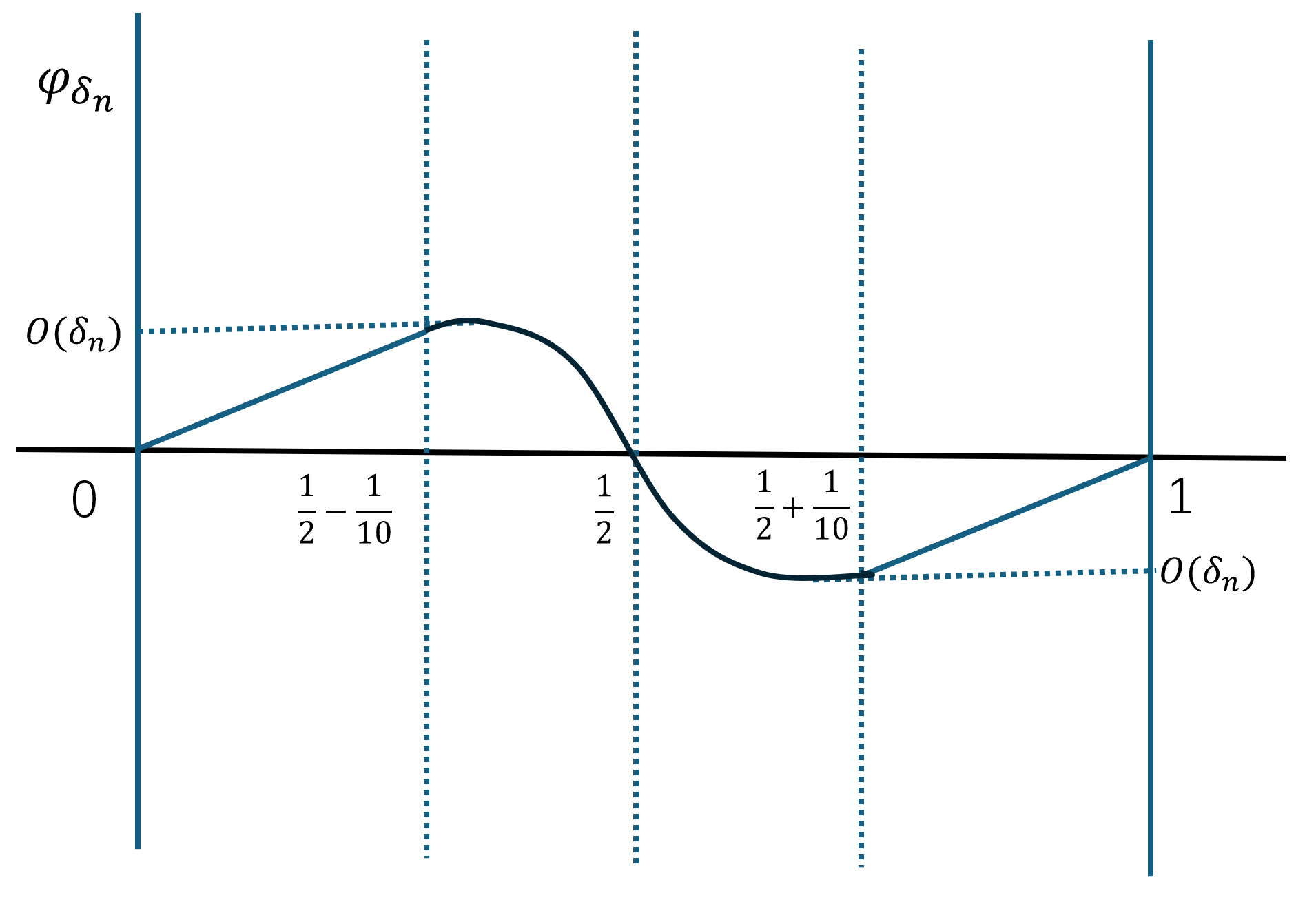}
			\caption{The hyperbolic perturbation in Herman's counterexample}
			\label{fi31}
		\end{figure}

		\begin{Remark}
			In Herman's counterexample, the parameter is denoted by $\lambda_n$ and defined as $\lambda_n := n^{-a/2}$. The parameter $\delta_n := \lambda_n^2$ is introduced here for notational simplicity in the subsequent exposition.
		\end{Remark}
		
		\subsection{On the necessity of the second perturbation}
		
		Herman proved \cite[p.71, Theorem 4.6]{H1} that for every
		\begin{equation}\label{omhe}
			0 < |\omega| < n^{-\frac{a}{2} - \varepsilon},
		\end{equation}there exists a sufficiently large $N_0$ such that for all $n \geq N_0$, the map $F_{n,a}$ admits no invariant circle  with rotation number $\omega$.
		Using the variational methods  from Aubry--Mather theory, one can  overcome the technical limitations of geometric approaches and consider replacing $\varphi_{\delta_n}$ with a more natural hyperbolic perturbation, for example $\delta_n\sin 2\pi x$ (see \cite{W}).
		\vspace{1ex}
		
		In \cite[p.~78, Remark~4.7.3]{H1}, Herman noted: \textbf{\textit{C'est parce que je ne sais pas si le diff\'eomorphisme \( f_{n,a} \) v\'erifie le Th\'eor\`eme 4.6 (avec \( \omega \neq 0 \)) qu'il a \'et\'e n\'ecessaire de perturber \( f_{n,a} \) en \( F_{n,a} \).}} (It is because I do not know whether the diffeomorphism \( f_{n,a} \) satisfies Theorem 4.6 (for \( \omega \neq 0 \)) that it was necessary to perturb \( f_{n,a} \) into \( F_{n,a} \).)
		
		\vspace{1ex}
		
		In this note, we aim to show that $f_{n,a}$ by itself cannot destroy the invariant circle of rotation number $\omega$ satisfying (\ref{omhe}) upon replacing $\varphi_{\delta_n}(x)$ with $\delta_n\sin2\pi x$. A detailed discussion of Herman's original counterexample is provided in Subsection~\ref{compaa}. In fact, it is evident from the proof that the conclusion also holds for perturbations of the form $\delta_n\sin (Mx)$, where $M$ is a constant independent of $n$. Specifically, we prove that there exist a subsequence $\{N_m\}_{m\in\mathbb{N}}$ and a corresponding sequence of rotation numbers $\{\omega_m\}_{m\in\mathbb{N}}$ satisfying
		\[
		0 < |\omega_m| < N_m^{-\frac{a}{2}-\varepsilon},
		\]
		such that $f_{N_m,a}$ preserves the invariant circle of rotation number $\omega_m$, while $F_{N_m,a}$ destroys it. A precise formulation is given in Theorem~\ref{Mthm2} below.
		
		Loosely speaking, the difficulty in answering Herman's question---regardless of whether the perturbation is $\varphi_{\delta_n}(x)$ or $\delta_n\sin 2\pi x$---lies in the following aspects:
		\begin{itemize}
			\item For proving {\it persistence}, classical KAM methods are inapplicable since both \( f_{n,a} \) and \( F_{n,a} \) are perturbations of the same integrable system with small magnitudes in the same topology. However, for rotation numbers $\omega$ satisfying \eqref{omhe}, the perturbation sizes exceed the range where classical KAM theory applies. Hence, these methods cannot determine whether \( f_{n,a} \) preserves invariant circles of rotation number $\omega$.
			
			\item For proving {\it destruction}, variational methods from Aubry-Mather theory are inadequate. Although Mather's criterion characterizes the existence of an invariant circle with rotation number $\omega$ by the vanishing of the corresponding Peierls barrier, it remains undecidable---under only the hyperbolic perturbation---whether this barrier vanishes identically for such $\omega$.
		\end{itemize}
		
		Both $\varphi_{\delta_n}(x)$ and $\delta_n\sin 2\pi x$ produce hyperbolic fixed points, yet they exhibit essential differences: with $\varphi_{\delta_n}(x)$, the separatrix remains unbroken and homoclinic intersections are absent, making the persistence of nearby invariant circles with irrational frequencies plausible. By contrast, for $\delta_n\sin 2\pi x$, it is known from classical studies of the standard map (e.g., \cite{La}) that homoclinic intersections arise for any $\delta_n>0$. Hence, the preservation of invariant circles with irrational frequencies under this perturbation is a more subtle phenomenon. Further discussion of this comparison can be found in Remark~\ref{compaa}.

 Within the framework of Aubry-Mather theory, we prove the destruction of invariant circles with specific rotation numbers for the system $F_{N_m,a}$. For the system $f_{N_m,a}$, however, the persistence of invariant circles with corresponding rotation numbers requires tools that lie between Aubry-Mather theory and classical KAM theory; we establish this using the direct KAM method (see Subsection~\ref{dirkk}).
		Although it is commonly believed that invariant circles detected by the direct KAM method should also be accessible via classical KAM techniques, we demonstrate that for certain rotation numbers the direct method provides distinct advantages---despite its stricter requirements, such as real-analyticity and specific perturbation structures. The judicious choice of rotation numbers is essential to exploiting this advantage. A schematic overview of the theoretical tools used in analyzing $F_{N_m,a}$ and $f_{N_m,a}$ is provided in Fig.~\ref{fi32}. In the figure, the dashed line indicates uncertainty regarding whether the persistence of invariant circles for $f_{N_m,a}$ can be established via classical KAM methods, while the shaded region represents dynamical objects---such as non-KAM invariant circles (see~\cite{GLW})---that lie beyond the quantitative scope of both KAM theory and Aubry-Mather theory.
		\begin{figure}[htbp]
			\small \centering
			\includegraphics[width=8.5cm]{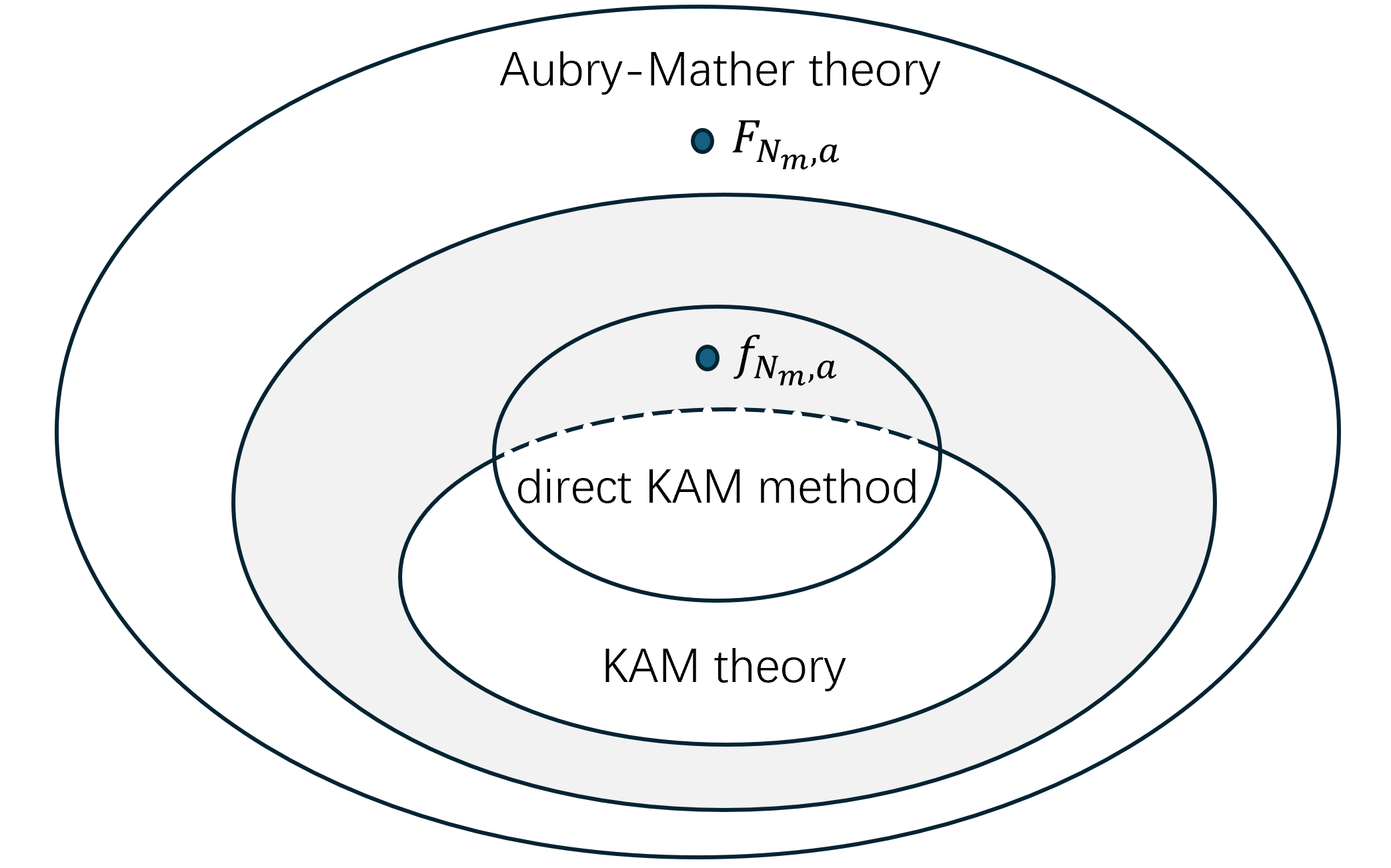}
			\caption{Theoretical tools for handling $F_{N_m,a}$ and $f_{N_m,a}$}
			\label{fi32}
		\end{figure}
		\subsection{Statement of main results}
		
		Let $\al>1$. Consider the subsequence defined by
		\begin{equation}\label{n_m}
			n_m: = \left\lfloor \exp\left(\frac{b}{a} m^{\frac{1}{\al}}\right) \right\rfloor,
		\end{equation}where $\lfloor x\rfloor$ denotes the greatest integer less than or equal to $x$, the letter $b$ is also a parameter and  the requirement $b > 4$ is needed by the Siegel-Brjuno estimate (see Lemma \ref{Lemma 5} below). Define the perturbation size as
		\begin{equation}\label{bval}
			\delta_m :=\exp(-b m^{\frac{1}{\al}}).
		\end{equation}
		Now, consider the generating functions:
		\begin{equation}\label{h2}
			h_m(x, x'): = \frac{1}{2}(x - x')^2 + \delta_m (1 - \cos 2\pi x').
		\end{equation}
		\begin{equation}\label{h}
			H_m(x, x'): = \frac{1}{2}(x - x')^2 + \delta_m (1 - \cos 2\pi x') + \xi_m(x'),
		\end{equation}
		The map $F_{m,a}:\T\times\R\to\T\times\R$ is generated by $H_m$, while $f_{m,a}:\T\times\R\to\T\times\R$ is generated by $h_m$, where $\eta_m$ denotes the derivative of $\xi_m$. We
		denote, once and for all, $\T:=\R/\Z$ be a flat circle. Explicitly, the maps are given by:
		\begin{equation}\label{f}
			f_{m,a}(x, y) = \left(x + y,\ y + \delta_m \sin 2\pi(x + y)\right),
		\end{equation}
		\[
		F_{m,a}(x, y) = \left(x + y,\ y + \delta_m \sin 2\pi (x + y) + \eta_m(x + y)\right).
		\]
		Let us also define the integrable twist map
		\begin{equation}\label{intst}
			T(x, y) = (x + y, y).
		\end{equation}
		
		\subsubsection{\bf On the destruction}
		By applying the variational framework of Aubry-Mather theory together with refined estimates on the Peierls barrier, we obtain the following result:
		
		\begin{Theorem}\label{Mthm1}
			Fix $a > 1$, $b > 0$ and $0 < \varepsilon \ll 1$. There exists $M_0 > 0$ such that for every $m \geq M_0$ and every rotation number $\omega$ satisfying
\begin{equation}\label{omb}
0 < |\omega| < n_m^{-\frac{a}{2} - \varepsilon},
\end{equation}
where $n_m$ is defined in \eqref{n_m}, the map $F_{m,a}$ admits no invariant circle  with rotation number $\omega$. Moreover, for any $\alpha > 1 + \frac{a}{\varepsilon}$ and $L \in (0, b)$,
\[
\|F_{m,a} - T\|_{\alpha, L} \to 0 \quad \text{as } m \to \infty.
\]
		\end{Theorem}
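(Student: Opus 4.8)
The plan is to invoke Mather's criterion: for the exact area‑preserving twist map $F_{m,a}$ generated by $H_m$, an invariant circle with irrational rotation number $\omega$ exists if and only if the Peierls barrier $P_\omega$ of $H_m$ vanishes identically. Hence it suffices to produce, for every $\omega$ with $0<|\omega|<n_m^{-a/2-\varepsilon}$, a point $\xi_0=\xi_0(m)\in\T$ at which $P_\omega(\xi_0)>0$. The geometry guiding the estimate is Herman's: the term $\delta_m(1-\cos 2\pi x')$ produces a hyperbolic fixed point of $F_{m,a}$ at $x=0$ with hyperbolicity rate $\sim\sqrt{\delta_m}$, while $\xi_m$ (with $\eta_m=\xi_m'$ a bump) creates a separatrix splitting, and hence a region of instability whose width \emph{in rotation numbers} we will force to exceed $n_m^{-a/2-\varepsilon}$. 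The difference from Herman's original setting is that, since $\delta_m\sin 2\pi x$ is genuinely nonlinear near $x=0$ (unlike his piecewise‑linear $\varphi_{\delta_n}$), the relevant computations must be carried out variationally, via action comparisons, rather than via lobe‑area geometry.

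The core is a lower bound for $P_\omega(\xi_0)$, which I would obtain as follows. First, after a symplectic rescaling adapted to the rate $\sqrt{\delta_m}$, the dynamics of $F_{m,a}$ near $x=0$ is an $O(\delta_m)$‑perturbation of the separatrix (pendulum) map on a time scale $\sim\delta_m^{-1/2}$. Next, estimate $P_\omega(\xi_0)$ from below by comparing two families of configurations of rotation number $\omega$ through $\xi_0$: one that shadows the fixed point and then follows the quasi‑separatrix, and one forced through the support of the bump; the difference of their actions is, to leading order, a Melnikov‑type quantity
\[
\Delta_m \;\sim\; \Big| \sum_{n\in\Z} \eta_m\big( x_{\mathrm{sep}}(n) \big) \Big| ,
\]
where $x_{\mathrm{sep}}$ parametrizes the separatrix of the rescaled pendulum. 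Since $x_{\mathrm{sep}}$ varies on the time scale $\delta_m^{-1/2}$, Poisson summation controls $\Delta_m$ by the Fourier content of $\eta_m\circ x_{\mathrm{sep}}$ at frequency one; for a Gevrey‑$\alpha$ bump of height $h_m$ and width $w_m$ this yields
\[
\Delta_m \;\gtrsim\; h_m\, \exp\!\big( -c\, ( w_m\, \delta_m^{-1/2} )^{1/\alpha} \big) ,
\]
which, because $\alpha>1$, is \emph{much} larger than the background homoclinic action $\sim\exp(-c'\delta_m^{-1/2})$ already present in the pure standard map $f_{m,a}$. Finally I would convert $\Delta_m$ into a lower bound for $P_\omega(\xi_0)$ that is uniform over $0<|\omega|<n_m^{-a/2-\varepsilon}$: the sojourn time near the fixed point is $\sim\delta_m^{-1/2}\,|\log|\omega||$, the bump's contribution is essentially $\omega$‑independent in this range, and one checks $P_\omega(\xi_0)>0$ provided the destroyed band of rotation numbers, of width $\sim\sqrt{\delta_m}\,/\,\log(1/\Delta_m)$, exceeds $n_m^{-a/2-\varepsilon}$; with $\delta_m=\exp(-bm^{1/\alpha})$ and $n_m$ as in \eqref{n_m}, this is an inequality between elementary exponentials in $m^{1/\alpha}$, satisfied for $m\ge M_0$, and it fixes the admissible height and width of $\eta_m$.

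For the Gevrey bound, note that $F_{m,a}-T$ acts only in the $y$‑component, through $\delta_m\sin 2\pi(x+y)+\eta_m(x+y)$, and pre‑composition with the affine map $(x,y)\mapsto x+y$ does not increase the Gevrey‑$(\alpha,L)$ norm, so
$\|F_{m,a}-T\|_{\alpha,L}\le \delta_m\,\|\sin 2\pi(\cdot)\|_{\alpha,L}+\|\eta_m\|_{\alpha,L}$.
The first term tends to $0$ because $\sin$ is entire and $\delta_m\to 0$; for the second, the bump $\eta_m$ constructed above has width $w_m$ bounded below by a fixed fraction of $b$ and height $h_m\to 0$, whence $\|\eta_m\|_{\alpha,L}\lesssim h_m\to 0$ for every $L\in(0,b)$. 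The compatibility of the two requirements on $(h_m,w_m)$ — ``effective enough to destroy circles up to rotation number $n_m^{-a/2-\varepsilon}$'' versus ``Gevrey‑$(\alpha,L)$ norm $\to 0$ for all $L<b$'' — is precisely what forces $\alpha>1+\tfrac{a}{\varepsilon}$.

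I expect the main obstacle to be the middle step: converting the heuristic Melnikov/splitting picture into a rigorous lower bound for the Peierls barrier of $F_{m,a}$ that is simultaneously (i) uniform over the whole interval $0<|\omega|<n_m^{-a/2-\varepsilon}$, and (ii) strong enough to dominate the nonzero (but super‑exponentially small) homoclinic action intrinsic to $f_{m,a}$, while (iii) the bump still has vanishing Gevrey‑$(\alpha,L)$ norm. These are the ``refined estimates on the Peierls barrier'' that replace Herman's geometric computations, and getting the three competing scales — $\sqrt{\delta_m}$ (hyperbolicity), $w_m\delta_m^{-1/2}$ (Gevrey damping of the Melnikov term), and $n_m^{-a/2-\varepsilon}$ (target band) — to line up is where the condition $\alpha>1+\tfrac{a}{\varepsilon}$ is born.
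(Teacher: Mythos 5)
Your proposal correctly identifies the skeleton (Mather's criterion, need for a pointwise lower bound on the Peierls barrier, competition between the hyperbolicity scale $\sqrt{\delta_m}$, the Gevrey damping, and the target band $n_m^{-a/2-\varepsilon}$, and the role of $\alpha>1+\tfrac a\varepsilon$). However, the route you propose for the lower bound is genuinely different from the paper's, and as sketched it has a gap.

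\textbf{The paper's route.} The paper does not pass through a Melnikov/separatrix-splitting analysis or Poisson summation at all. Instead it (i) shows, by a simple action-comparison argument for the unperturbed map $f_{m,a}$, that the minimal configuration of $h_m$ at rotation symbol $0^+$ has gaps of size $\gtrsim\frac12\Delta_m=\frac12\sqrt{\delta_m}$ in the middle region (Lemma \ref{lowstep}); (ii) places the bump $\xi_m$, of support width $\sim\frac14\Delta_m$, entirely inside one such gap, so the $h_m$-minimizer misses $\operatorname{supp}\xi_m$; (iii) then reads off $P_{0^+}^{H_m}(\tau)\ge\xi_m(\tau)$ directly, because the $h_m$-minimizer serves as a competitor with zero bump contribution, while any configuration through $\tau$ pays at least $\xi_m(\tau)$. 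This is purely variational — no separatrix geometry, no Fourier analysis. (iv) Finally, to obtain positivity uniformly over $0<|\omega|<n_m^{-a/2-\varepsilon}$, the paper does not use a ``sojourn time'' estimate; it proves a quantitative modulus-of-continuity bound $|P_\omega^{H_m}(\tau)-P_{0^+}^{H_m}(\tau)|\le C\epsilon_m^2$ (Lemma \ref{pw}, following Wang's \cite{W} hyperbolicity-sharpened modulus of continuity), and compares the two exponentials. This is the step you flagged as the main obstacle, and the paper resolves it with a tool you did not mention.

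\textbf{Concrete gaps in your scheme.} First, your heuristic $\Delta_m\gtrsim h_m\exp(-c(w_m\delta_m^{-1/2})^{1/\alpha})$ misidentifies the scales. In the paper the bump width is $w_m\sim\Delta_m=\sqrt{\delta_m}$, so $w_m\delta_m^{-1/2}\sim 1$ and your exponent would be $O(1)$ — no damping at all. The actual damping in the lower bound is $\exp(-\lambda\,2^{3/(\alpha-1)+1/2}\Delta_m^{-1/(\alpha-1)})$; it comes from the Gevrey flatness of $f_\lambda$ evaluated near the edge of a shrinking support, and the exponent is $1/(\alpha-1)$, not $1/\alpha$. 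Getting this exponent right is exactly what produces the threshold $\alpha>1+\tfrac a\varepsilon$ when matched against the continuity estimate $\exp(-2\exp(\tfrac{b\varepsilon}{2a}m^\beta))$, so the discrepancy is not cosmetic. Second, your claim that the bump ``has width $w_m$ bounded below by a fixed fraction of $b$'' is false: $\operatorname{supp}\xi_m=[\tau-\tfrac18\Delta_m,\tau+\tfrac18\Delta_m]$ shrinks to a point; $b$ only controls the Gevrey radius $L\in(0,b)$, not the support. The Gevrey norm $\|v_m\|_{\alpha,L}\to0$ because $v_m=\Delta_m^2 f_\lambda(\cdot)f_\lambda(\cdot)$ is a $\Delta_m^2$-multiple of a product of translates of a fixed Gevrey-$\alpha$ function, and the Leibniz inequality \eqref{leib} is applied; it is not a consequence of the height alone with a fixed width. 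Third, a Melnikov-type ``difference of actions of two paths'' is not a lower bound on the Peierls barrier — the barrier is a difference of \emph{minima} over all configurations, so one needs the structural fact that the unperturbed minimizer is still a valid competitor for the perturbed system and that it avoids $\operatorname{supp}\xi_m$; that is what makes the estimate $P_{0^+}^{H_m}(\tau)\ge\xi_m(\tau)$ clean and rigorous, and it is absent from your scheme.
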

		In Theorem~\ref{Mthm1}, the norm $\|\cdot\|_{\alpha, L}$ is  defined in (\ref{gevr}) below. Bounemoura and F\'{e}joz \cite{BF} studied an analogous problem for Hamiltonian systems with $d$ degrees of freedom using Bessi's variational approach \cite{Bes}. Comparably, Theorem~\ref{Mthm1} offers a more explicit characterization of the parameter dependencies.

		\subsubsection{\bf On the persistence}\label{dirkk}
		Proving the persistence of invariant circles with rotation numbers satisfying \eqref{omb} in the absence of bump perturbations requires a more involved approach. Loosely speaking, classical KAM methods, which rely on iterative coordinate transformations, are insensitive to the specific structure of the perturbation and depend only on its magnitude in a given topology. Consequently, these techniques are inapplicable for establishing the persistence of invariant circles under perturbations without bumps within the same topology as in Theorem~\ref{Mthm1}.
		
		Inspired by Berretti and Gentile \cite{BG1}, we adopt the direct KAM method, initially introduced by Eliasson (following a reexamination of Siegel's method \cite{Sie}) and subsequently refined by Gallavotti, Chierchia, Falcolini and others (see, e.g., \cite{BG-1,BG1,E2,CF,Ga1,GM}). Specifically, we establish the following result:
		
		\bthm\label{Mthm2}
		Fix $a > 1$, $b > 4$ and $0 < \varepsilon \ll 1$. There exists a sequence $\{\omega_m\}_{m \in \mathbb{N}}$ satisfying
		\[
		0 < |\omega_m| < n_{j_m}^{-\frac{a}{2} - \varepsilon},
		\]
		where $n_{j_m}$ is defined by replacing $m$ with $j_m$ in  (\ref{n_m}), and a sufficiently large $M_1$ such that for all $m \geq M_1$, the map $f_{j_m,a}$ admits an invariant circle with rotation number $\omega_m$.
		\ethm
		
		In Theorem~\ref{Mthm2}, the specific choices of $\omega_m$ and $j_m$ are given in (\ref{oqqm}) below. The proof of Theorem~\ref{Mthm2} is primarily inspired by \cite{BG1}, with key innovations including an improved Siegel--Brjuno estimate (see \cite[Lemma 5]{BG1}), a refined notion of resonance, and an adapted multi-scale decomposition and renormalization scheme.

		\subsubsection{\bf On the arithmetics of the rotation number}
		For real analytic systems, R\"{u}ssmann showed in a series of papers, culminating in \cite{R3}, that the Diophantine condition on the rotation number \(\omega \in \mathbb{R} \setminus \mathbb{Q}\) can be replaced by a weaker condition that still guarantees the existence of invariant circles with rotation number $\omega$. Let \(\{p_n / q_n\}_{n \in \mathbb{N}}\) be the sequence of convergents in the continued fraction expansion of \(\omega\). The Diophantine condition can be expressed in terms of this expansion as
		\begin{equation}\label{dionu}
			\ln q_{n+1} \leq C \ln q_n, \quad n \in \mathbb{N}.
		\end{equation}
		
		R\"{u}ssmann proved that the condition
		\[
		\sum_{n \geq 0} \frac{\ln q_{n+1}}{q_n} < +\infty,
		\]
		previously introduced by Brjuno in the context of classical perturbation theory in Hamiltonian mechanics and known as the \textit{Brjuno condition}, suffices for the existence of invariant circles. For Gevrey-$\alpha$ systems, it is natural to introduce another arithmetic condition on the rotation number. For $\alpha \geq 1$, the number $\omega$ is called an $\alpha$-Brjuno--R\"{u}ssmann number if it satisfies the condition
		\begin{equation}\label{brcon}
			\sum_{n \geq 0} \frac{\ln q_{n+1}}{q_n^{1/\alpha}} < +\infty.
		\end{equation}
		In particular, when $\alpha = 1$, $\omega$ is referred to as a Brjuno number. We denote by $\mathcal{B}$ the set of Brjuno numbers, and by $\mathcal{BR}_\alpha$ the set of $\alpha$-Brjuno--R\"{u}ssmann numbers.
		
		Define the  maps
		\[
		\bar{F}_{m,a} := \left(x + y,\ y + \frac{\delta_{q_m}}{q_m} \sin(2\pi q_m(x + y)) + \frac{1}{q_m}\eta_{q_m}(q_m(x + y))\right),
		\]
		\[
		\bar{f}_{m,a} := \left(x + y,\ y + \frac{\delta_{q_m}}{q_m} \sin(2\pi q_m(x + y)) )\right).
		\]
		By a finite covering argument \cite[p.~78, 4.8]{H1} (see also Lemma~\ref{Herm} below), in combination with Theorems~\ref{Mthm1} and~\ref{Mthm2}, we obtain the following result:
		
		\begin{Theorem}\label{Mthm3}
			Fix $a > 1$, $b>4$ and $0 < \varepsilon \ll 1$. For any $\alpha > 1 + \frac{a}{\varepsilon}$, there exists ${\bar{\omega}} \in \mathcal{B} \setminus \mathcal{BR}_\alpha$ such that all sufficiently large $m$, the invariant circle  with rotation number $\bar{\omega}$ is destroyed by $\bar{F}_{m,a}$ but preserved by $\bar{f}_{m,a}$. Moreover, for any $L \in (0, b)$, we have
			\[
			\|\bar{F}_{m,a} - T\|_{\alpha, L} \to 0 \quad \text{as } m \to \infty,
			\]
			and the dynamics of $\bar{f}_{m,a}$ restricted to the invariant circle  is $C^\omega$-conjugate to the rigid rotation $R_{\bar{\omega}}(x):=x+\bar{\omega}$.
		\end{Theorem}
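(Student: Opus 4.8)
The plan is to transfer Theorems~\ref{Mthm1} and~\ref{Mthm2} from the ``resonant'' maps $F_{q_m,a},f_{q_m,a}$ to the renormalized maps $\bar F_{m,a},\bar f_{m,a}$ by Herman's finite covering argument (Lemma~\ref{Herm}). Let $\{p_m/q_m\}$ be the convergents of a number $\bar\omega$ to be fixed below. The affine map $(x,y)\mapsto(q_mx,\,q_my-p_m)$ is a $q_m$-fold self-covering of $\T\times\R$ intertwining $\bar f_{m,a}$ with $f_{q_m,a}$ and $\bar F_{m,a}$ with $F_{q_m,a}$; hence $\bar f_{m,a}$ (resp.\ $\bar F_{m,a}$) has an invariant circle of rotation number $\bar\omega$ if and only if $f_{q_m,a}$ (resp.\ $F_{q_m,a}$) has one of rotation number $\omega_m:=q_m\bar\omega-p_m$, and in the affirmative case an analytic conjugacy of the restricted dynamics to a rotation transports through the (linear) covering: an analytic conjugacy of $f_{q_m,a}$ on its circle to $R_{\omega_m}$ yields one of $\bar f_{m,a}$ on the corresponding circle to $R_{\bar\omega}$. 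Since $\tfrac1{q_{m+1}+q_m}<|\omega_m|<\tfrac1{q_{m+1}}$, we have $\ln(1/|\omega_m|)=\ln q_{m+1}+O(1)$. It thus suffices to produce one $\bar\omega$ whose convergents realize the sequences $j_m,\omega_m$ of Theorem~\ref{Mthm2}.

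This $\bar\omega$ is the number whose partial quotients are specified in (\ref{oqqm}): there $j_m=q_m$ and $\omega_m=q_m\bar\omega-p_m$, and the $a_{m+1}$ (equivalently the sizes $\ln q_{m+1}$) are chosen in the window
\[
\Bigl(\tfrac b2+\tfrac{b\varepsilon}{a}\Bigr)q_m^{1/\al}\;<\;\ln q_{m+1}\;<\;\frac{b}{C}\,q_m^{1/\al},
\]
$C$ being the constant furnished by the improved Siegel--Brjuno estimate. The left inequality gives $|\omega_m|<q_{m+1}^{-1}\le n_{q_m}^{-a/2-\varepsilon}$, i.e.\ condition (\ref{omb}) at index $q_m$; it puts $F_{q_m,a}$ into the destruction regime of Theorem~\ref{Mthm1} and $\omega_m$ into the range allowed by Theorem~\ref{Mthm2}. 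The right inequality is the requirement of the direct KAM method: the Brjuno-type sum of $\omega_m$ is dominated by its leading term $\ln(1/|\omega_m|)\approx\ln q_{m+1}$, which must stay below the threshold imposed by $\delta_{q_m}=\exp(-b\,q_m^{1/\al})$. Non-emptiness of the window is exactly what the improved Siegel--Brjuno estimate secures, and it is here that $b>4$ is needed.

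Granting this, the arithmetic of $\bar\omega$ is immediate: $\ln q_{m+1}\asymp q_m^{1/\al}$ while $q_m$ grows at least like $\exp(cq_{m-1}^{1/\al})$, so $\sum_m\ln q_{m+1}/q_m\asymp\sum_m q_m^{1/\al-1}<\infty$ and $\bar\omega\in\mathcal B$, whereas $\ln q_{m+1}/q_m^{1/\al}\ge\tfrac b2+\tfrac{b\varepsilon}{a}>0$, so $\sum_m\ln q_{m+1}/q_m^{1/\al}=+\infty$ and $\bar\omega\notin\mathcal{BR}_\al$. Applying Lemma~\ref{Herm}: for $q_m\ge M_0$, Theorem~\ref{Mthm1} forbids $F_{q_m,a}$ an invariant circle of rotation number $\omega_m$, so $\bar F_{m,a}$ has none of rotation number $\bar\omega$; for $q_m\ge M_1$, Theorem~\ref{Mthm2} (whose direct-method proof produces an analytic parametrization conjugating the restricted dynamics to $R_{\omega_m}$) gives $f_{q_m,a}$ an invariant circle of rotation number $\omega_m$, so $\bar f_{m,a}$ has one of rotation number $\bar\omega$ with $C^\omega$-linearizable internal dynamics. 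Lastly, $\|\bar F_{m,a}-T\|_{\al,L}\to0$ for $L\in(0,b)$ as in Theorem~\ref{Mthm1}, tracking the renormalization: the hyperbolic term $\tfrac{\delta_{q_m}}{q_m}\sin2\pi q_m(\cdot)$ has Gevrey norm $\asymp q_m^{-1}\exp((L-b)q_m^{1/\al})\to0$ --- the frequency $q_m$ contributing $\exp(Lq_m^{1/\al})$, which $\delta_{q_m}=\exp(-bq_m^{1/\al})$ absorbs --- and the renormalized bump term is controlled by the corresponding Gevrey estimate on $\eta_m$.

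The only real obstacle is the choice of $\bar\omega$: its convergents must grow \emph{fast enough} that the resonant renormalization near each $p_m/q_m$ drops $F_{q_m,a}$ into the destruction window (\ref{omb}), yet \emph{slow enough} that it keeps $f_{q_m,a}$ within reach of the direct KAM method, and $\bar\omega$ must simultaneously sit right at the $\mathcal{BR}_\al$ threshold so as to belong to $\mathcal B\setminus\mathcal{BR}_\al$. This triple balance is the one carried out in the proof of Theorem~\ref{Mthm2}; the present argument only repackages it, through the covering Lemma~\ref{Herm}, as a statement about a fixed rotation number $\bar\omega$ for $\bar f_{m,a}$ and $\bar F_{m,a}$.
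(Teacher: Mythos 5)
Your proposal correctly reproduces the paper's (brief) proof: Theorem~\ref{Mthm3} follows by combining the covering Lemma~\ref{Herm} with Theorem~\ref{Mthm1} (destruction for $F_{q_m,a}$) and Theorem~\ref{Mthm2} (persistence for $f_{q_m,a}$), together with the arithmetic of $\bar\omega$ in Lemma~\ref{arh}, the Gevrey smallness of the renormalized perturbation, and the $C^\omega$ conjugacy coming from the convergent Lindstedt series of Theorem~\ref{Mthm21}, which the paper in fact states directly for $\tilde f_{m,a}=\bar f_{m,a}$ so no further transport of the analytic conjugacy is needed. Your ``window'' $(\tfrac b2+\tfrac{b\varepsilon}{a})q_m^{1/\alpha}<\ln q_{m+1}<\tfrac bC q_m^{1/\alpha}$ is a repackaging not used in the paper, which simply sets $a_{m+1}=\lfloor\exp(bq_m^{1/\alpha})\rfloor$; also, $b>4$ enters through the comparison of $\varepsilon_m=\delta_{q_m}/q_m$ with the Lindstedt radius of convergence (end of Section~\ref{5}), not through non-emptiness of the window as you suggest, but these are presentational points that do not change the logic.
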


		\begin{Remark}
			By applying Jackson's approximation and Bernstein's estimate, the Gevrey-$\alpha$ bump function in $\bar{F}_{m,a}$ can be replaced by a trigonometric polynomial while preserving the validity of Theorem~\ref{Mthm3} without any modification to its statement. For technical details, we refer to \cite{W22,W23}.
		\end{Remark}

		\subsection{Remarks on Herman's counterexample}\label{compaa}
		In Herman's original construction, the following two conditions are required:
		\begin{itemize}
			\item [1.] The perturbation $\varphi_{\delta_n}$ is of class $C^\infty$, and linear in a neighborhood of the hyperbolic fixed point;
			\item [2.] The Aubry set with rotation symbol $0_{\pm}$ is an invariant circle.
		\end{itemize}
		
		Both requirements are designed to facilitate geometric estimates. The variational method employed to prove the non-existence of invariant circles with specific rotation numbers under the combined influence of hyperbolic and bump perturbations can be directly adapted to Herman's construction. This adaptability stems from the fact that Mather's variational approach depends mainly on the dynamics near the hyperbolic fixed point and is insensitive to whether the Aubry set with rotation symbol $0_{\pm}$ constitutes an invariant circle. Hence, Theorem \ref{Mthm1} still holds.
		
		Conversely, the direct KAM method for establishing the existence of invariant circles under purely hyperbolic perturbations (without bump perturbations) relies more substantially on the structural properties of the hyperbolic perturbation itself. In Herman's construction, the mode labels assigned to nodes in the tree expansion (see Section~\ref{tree}) exhibit greater complexity, and the renormalization procedure for resonances demands more intricate technical modifications, which we do not detail here (see \cite[Sections 2--5]{BG0} for relevant technical details).
		
		Under the more natural perturbation $\delta_n\sin 2\pi x$ we consider, the system (\ref{f}) becomes the Chirikov standard map, where the Aubry set with rotation symbol $0_{\pm}$ is already disrupted and fails to form an invariant circle \cite{La}. Specifically, transverse homoclinic intersections emerge, creating a complex orbital structure---commonly termed a stochastic layer---near the Aubry set of rotation symbol $0_{\pm}$. This complexity heightens the challenge of preserving invariant circles with rotation numbers close to $0$. Nevertheless, we have established that such invariant circles are indeed preserved. By analogy, it is reasonable to expect that in Herman's counterexample without bump perturbations, the invariant circles should similarly persist, though a rigorous proof remains under investigation due to technical obstacles. Fig. \ref{fi314} provides a schematic comparison of the dynamics for the integrable system (\ref{intst}) under perturbations $\varphi_{\delta_n}$ and $\delta_n\sin 2\pi x$, with the blue dashed line in the left panel indicating a plausible conjecture.

		\begin{figure}[htbp]
			\small \centering
			\includegraphics[width=9.0cm]{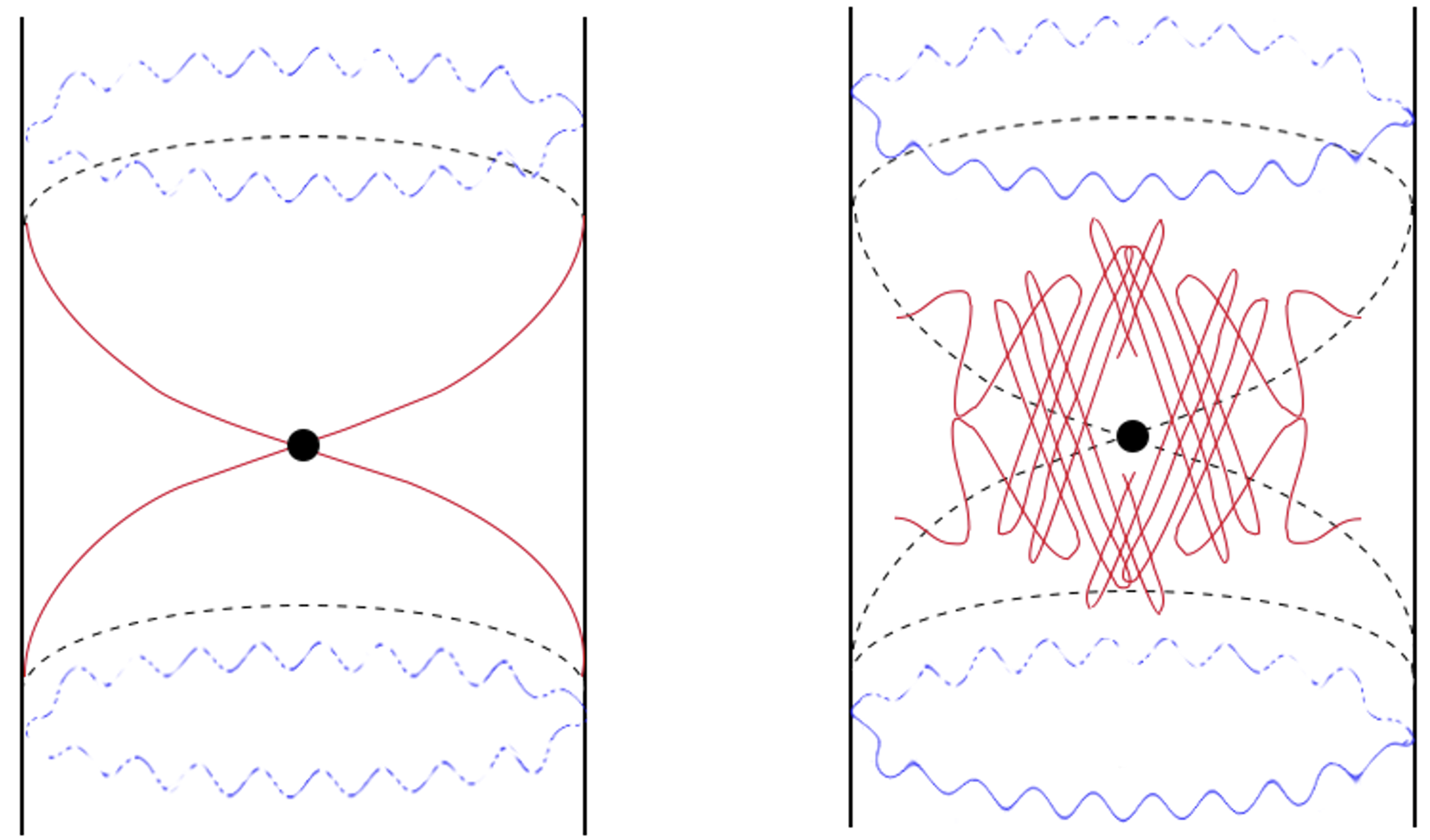}
			\caption{Schematic description of dynamics caused by two perturbations}
			\label{fi314}
		\end{figure}

		\vspace{1em}
		\noindent{\bf Organization of this note.}
		The structure of this note is as follows. Section~\ref{2} presents the construction of a sequence of Gevrey-$\alpha$ bump functions $\xi_m$ with $\|\cdot\|_{\alpha, L}$-norms decaying to zero as $m \to \infty$. In Section~\ref{3}, through lower-bound estimates of the perturbed Peierls barrier and an analysis of its dependence on the rotation symbol, we demonstrate the destruction of invariant circles at specified rotation numbers under combined hyperbolic and bump perturbations, thus establishing Theorem~\ref{Mthm1}. Section~\ref{4} develops an appropriate selection of rotation numbers and, via Herman's finite covering argument, reframes the proof of invariant circle existence. This reformulation enables a more effective application of the arithmetic properties of the rotation numbers. Section~\ref{5} employs the tree expansion method for Lindstedt series, incorporating an $m$-dependent multi-scale decomposition and resonance structure, which yields an enhanced Siegel-Brjuno estimate and refined control of non-resonant contributions in tree values, thereby concluding the proof of Theorem~\ref{Mthm2}.
		Appendix A provides technical details on the renormalization of resonances. The presentation adapts and mildly modifies the framework of \cite[Sec. 4-5]{BG1}; for the reader's convenience, we include a self-contained exposition in the appendix.
		
		\vspace{2em}
		
		\noindent\textbf{Acknowledgement.}
		This work was partially supported by the National Natural Science Foundation of China (Grant No. 12122109) and the Beijing Natural Science Foundation (Grant No. QY25241). The authors are also grateful to Professor Giovanni Forni for his invaluable suggestions and to Junhao Li for his assistance in preparing Fig. \ref{fi314}.
		
		\vspace{1em}

		\noindent\textbf{Data Availability Statement.}
		The authors state that this manuscript has no associated data and there is no conflict of interest.
		
		\vspace{1em}
		
		\section{Construction of the bump}\label{2}
		\subsection{Gevrey-$\alpha$ function}
		We fix $\alpha>1$.  Let
		\begin{equation}\label{gevr}
			\|\phi\|_{\alpha,L}:=\sum_{k\in\N}\frac{L^{k\alpha}}{k!^\alpha}\|\partial^k\phi\|_{C^0(\T)},
		\end{equation}
		\[G^{\alpha,L}(\T):=\{\phi\in C^\infty(\T)| \|\phi\|_{\alpha,L}<\infty\},\quad G^\alpha(\T):=\bigcup_{L>0}G^{\alpha,L}(\T).\]
		Following \cite{MS}, Gevrey-$\alpha$ function is defined as follow.
		\begin{Definition}\label{gev}
			A function $\phi$ is called Gevrey-$\alpha$ function on $\T$ if  $\phi\in G^\alpha(\T)$.
		\end{Definition}
		From the Leibniz rule, it follows that for $L>0$, $\phi,\psi\in
		G^{\alpha,L}(\T)$,
		\begin{equation}\label{leib}
			\|\phi\psi\|_{\alpha,L}\leq \|\phi\|_{\alpha,L}\|\psi\|_{\alpha,L}.
		\end{equation}


		We construct a Gevrey-$\alpha$ perturbation $\xi_m$ as follows. First, for each $\lambda > 0$, we define a function $f_\lambda \in C^\infty(\mathbb{R})$ by:
		\begin{equation}
			f_\lambda(x) =
			\begin{cases}
				0, & x \leq 0, \\
				\exp\left(-\lambda\sqrt{2} x^{-\frac{1}{\alpha-1}}\right), & x > 0.
			\end{cases}
		\end{equation}
		
		\begin{Lemma}\label{gealp}
			Let $p = \frac{1}{\alpha-1}$ and $\sigma := \frac{\pi}{4} \min\left\{1, \frac{1}{p}\right\}$. For any given $L > 0$, if
			\[
			\lambda > \frac{(2L^\alpha / \sin\sigma)^p}{p},
			\]
			then $f_\lambda(x)$ is a Gevrey-$\alpha$ function on $\mathbb{R}$.
		\end{Lemma}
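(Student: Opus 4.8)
The plan is to use the holomorphic extension of $f_\lambda$ to a sector about the positive real axis: there $f_\lambda$ is uniformly small near the vertex, and Cauchy's estimate on disks inscribed in the sector converts this into bounds on the real derivatives $\partial^k f_\lambda(x)$, which are then optimized in $x>0$ and matched against the Gevrey weights via Stirling's formula.

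First I would record the analytic setup. The principal branch of $z\mapsto z^{-p}$ is holomorphic on the slit plane $\C\setminus(-\infty,0]$, so $z\mapsto\exp(-\lambda\sqrt{2}\,z^{-p})$ is a holomorphic extension of $f_\lambda$ there; in particular $f_\lambda\in C^\infty(\R)$ with $\partial^k f_\lambda(x)=0$ for every $x\le 0$ and every $k$, so that $\|\partial^k f_\lambda\|_{C^0(\R)}=\sup_{x>0}|\partial^k f_\lambda(x)|$. Next, on the closed sector $S_\sigma:=\{z\ne 0:\ |\arg z|\le\sigma\}$ the choice $\sigma=\tfrac{\pi}{4}\min\{1,1/p\}$ guarantees $p\sigma\le \tfrac{\pi}{4}$, hence $\mathrm{Re}(z^{-p})=|z|^{-p}\cos(p\arg z)\ge \tfrac{1}{\sqrt{2}}|z|^{-p}$ on $S_\sigma$; this is exactly where the factor $\sqrt{2}$ is spent, and it yields the key bound $|f_\lambda(z)|\le\exp(-\lambda|z|^{-p})$ for $z\in S_\sigma$.

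Then, for fixed $x>0$, I would apply the Cauchy integral formula on the circle $|z-x|=x\sin\sigma$, which lies in $S_\sigma$ and avoids the vertex because $\sin\sigma<1$; since $|z|\le x(1+\sin\sigma)\le 2x$ on that circle, the previous bound gives
\[
|\partial^k f_\lambda(x)|\ \le\ \frac{k!}{(x\sin\sigma)^k}\,\exp\!\big(-c\,x^{-p}\big),\qquad c:=\lambda\,2^{-p}.
\]
An elementary one-variable computation shows that $\sup_{x>0}x^{-k}\exp(-c x^{-p})$ is attained at $x^{-p}=k/(cp)$ and equals $(k/(cp))^{k/p}e^{-k/p}$. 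Using $1/p=\alpha-1$ and the crude Stirling bound $k^k e^{-k}\le k!$ (so $(k/p)^{k/p}e^{-k/p}\le(k!)^{1/p}=(k!)^{\alpha-1}$), this gives $\|\partial^k f_\lambda\|_{C^0(\R)}\le (k!)^\alpha\big((\sin\sigma)\,(cp)^{\alpha-1}\big)^{-k}$, whence
\[
\|f_\lambda\|_{\alpha,L}\ \le\ \sum_{k\ge 0}\left(\frac{L^\alpha}{(\sin\sigma)\,(cp)^{\alpha-1}}\right)^{\!k}.
\]
This geometric series converges precisely when $(cp)^{\alpha-1}>L^\alpha/\sin\sigma$, equivalently $cp>(L^\alpha/\sin\sigma)^p$, equivalently $\lambda\,2^{-p}p>(L^\alpha/\sin\sigma)^p$, i.e. $\lambda>p^{-1}(2L^\alpha/\sin\sigma)^p$ — which is exactly the hypothesis. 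Hence $f_\lambda\in G^{\alpha,L}(\R)\subset G^\alpha(\R)$, as claimed.

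The main obstacle is calibrating the sector aperture $\sigma$. It must be small enough that $\mathrm{Re}(z^{-p})\ge\tfrac{1}{\sqrt{2}}\,|z|^{-p}$ holds on $S_\sigma$ — this forces $p\sigma\le\pi/4$ and hence the appearance of $\min\{1,1/p\}$ — but shrinking $\sigma$ worsens the Cauchy constant through the factor $(\sin\sigma)^{-k}$, and it is the balance between this loss, the super-exponential decay $\exp(-c x^{-p})$, and the Gevrey weight $L^{k\alpha}/k!^\alpha$ (handled by Stirling) that produces the explicit threshold on $\lambda$. Everything else is routine.
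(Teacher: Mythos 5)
Your argument is correct and matches the paper's proof almost verbatim: holomorphic extension to the slit plane, the sector $\Sigma_\sigma$ with $p\sigma\le\pi/4$ giving $\mathrm{Re}(z^{-p})\ge|z|^{-p}/\sqrt{2}$, Cauchy on the inscribed disk of radius $x\sin\sigma$, the one-variable maximization, and Stirling — you merely make the final geometric-series computation more explicit than the paper does. One small slip in the parenthetical: the bound should read $(k/(cp))^{k/p}e^{-k/p}\le (cp)^{-k/p}(k!)^{1/p}$ (obtained by raising $(k/(cp))^k e^{-k}\le (cp)^{-k}k!$ to the $1/p$ power), since as written $(k/p)^{k/p}e^{-k/p}\le(k!)^{1/p}$ fails for $p<1$; however your displayed conclusion $\|\partial^k f_\lambda\|_{C^0}\le (k!)^\alpha((\sin\sigma)(cp)^{\alpha-1})^{-k}$ is correct and the proof goes through.
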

		
		\begin{proof}
			Since $\alpha \in (1, \infty)$, we have $p \in (0, \infty)$ and
			\begin{equation}
				f_\lambda(x) =
				\begin{cases}
					0, & x \leq 0, \\
					\exp\left(-\lambda\sqrt{2} x^{-p}\right), & x > 0.
				\end{cases}
			\end{equation}
			
			Let $k \in \mathbb{N}$ and $x > 0$. Note that $f_\lambda|_{\mathbb{R}^+}$ extends to a holomorphic function on $\mathbb{C} \setminus (-\infty, 0]$. Let $\Sigma_\sigma = \{z \in \mathbb{C} \mid |\arg z| \leq \sigma\}$. The closed disk $D_z$ centered at $x$ with radius $x \sin \sigma$ is the largest disk centered at $x$ contained in $\Sigma_\sigma$. By the Cauchy estimate,
			\[
			\left|f_\lambda^{(k)}(x)\right| \leq \frac{k!}{(x \sin \sigma)^k} \max_{D_z} |f_\lambda|.
			\]
			
			Let $z = r e^{i\theta} \in D_z$. Since $|\theta| \leq \sigma = \frac{\pi}{4} \min\left\{1, \frac{1}{p}\right\}$, we have
			\[
			\mathfrak{Re}(z^{-p}) = r^{-p} \cos(p\theta) \geq \frac{1}{\sqrt{2} |z|^p}, \quad \text{and} \quad |z| \leq 2x.
			\]
			Hence,
			\[
			\max_{D_z} |f_\lambda| \leq \exp\left(-\frac{\lambda}{(2x)^p}\right).
			\]
			
			The maximum of the function $y \mapsto y^k e^{-\lambda y^p}$ is $\left(\frac{k}{\lambda p e}\right)^{k/p}$, so we obtain
			\begin{equation}\label{kd}
				\left|f_\lambda^{(k)}(x)\right| \leq \left(\frac{2}{\sin \sigma}\right)^k \left(\frac{k}{\lambda p e}\right)^{k/p} k!.
			\end{equation}
			
			By Stirling's formula, for any given $L > 0$, if
			\[
			\lambda > \frac{(2L^\alpha / \sin\sigma)^p}{p},
			\]
			then
			\[
			\sum_{k \in \mathbb{N}} \frac{L^{k\alpha}}{k!^\alpha} \left\|f_\lambda^{(k)}(x)\right\|_{C^0(\mathbb{R})} < \infty.
			\]
			Therefore, $f_\lambda(x)$ is a Gevrey-$\alpha$ function on $\mathbb{R}$.
		\end{proof}

		\subsection{Construction of  $\xi_m$}
		Define
\[
\Delta_m := \exp\left(-\frac{b}{2} m^\beta\right) = \sqrt{\delta_m}.
\]

Fix \(\alpha > 1 + \frac{a}{\varepsilon}\) and \(L \in (0, b)\). Let \(\lambda_0\) be as in Lemma~\ref{gealp}, and set
\[
\lambda := \lambda_0 + 1.
\]
For \(x \in [0, 1]\), define the function
\[
v_m(x) =
\begin{cases}
\Delta_m^2 f_\lambda\left(\frac{1}{8}\Delta_m - \frac{1}{2} + x\right) f_\lambda\left(\frac{1}{8}\Delta_m + \frac{1}{2} - x\right), & x \in \left[\frac{1}{2} - \frac{1}{8}\Delta_m, \frac{1}{2} + \frac{1}{8}\Delta_m\right], \\
0, & \text{otherwise}.
\end{cases}
\]
More explicitly, on the interval \(\left[\frac{1}{2} - \frac{1}{8}\Delta_m, \frac{1}{2} + \frac{1}{8}\Delta_m\right]\),
\[
v_m(x) = \Delta_m^2 \exp\left(-\lambda \sqrt{2} \left[ \left(\frac{1}{8}\Delta_m - \frac{1}{2} + x\right)^{-\frac{1}{\alpha-1}} + \left(\frac{1}{8}\Delta_m + \frac{1}{2} - x\right)^{-\frac{1}{\alpha-1}} \right] \right),
\]
where \(\lambda > 0\) is independent of \(m\). Extend \(v_m\) periodically to \(\mathbb{R}\) via \(v_m(x + 1) = v_m(x)\). By Lemma~\ref{gealp}, \(v_m\) is a Gevrey-\(\alpha\) function on \(\mathbb{R}\), and by the choice of \(\lambda\),
\[
\|v_m\|_{\alpha, L} \to 0 \quad \text{as } m \to \infty.
\]

A direct computation from the definition of \(v_m\) shows that for \(\alpha \in (1, \infty)\),
\begin{equation}\label{maxv}
\max_{x \in [0, 1]} v_m(x) = v_m\left(\frac{1}{2}\right) = \Delta_m^2 \exp\left(-\lambda 2^{\frac{3}{\alpha-1} + \frac{1}{2}} \Delta_m^{-\frac{1}{\alpha-1}}\right).
\end{equation}

To estimate the lower bound of \(P_{0^+}^{H_m}\) (see Subsection~\ref{persb}) at a specific point, we shift the axis of symmetry of \(v_m\) to a point \(\tau\) satisfying (\ref{taumo}) below. Define
\[
\xi_m(x) := v_m\left(x - \left(\tau - \frac{1}{2}\right)\right).
\]
Then
\[
\operatorname{supp} \xi_m = \left[\tau - \frac{1}{8}\Delta_m, \tau + \frac{1}{8}\Delta_m\right],
\]
and, as in \eqref{maxv},
\[
\xi_m(\tau) = \max_x \xi_m(x) = v_m\left(\frac{1}{2}\right).
\]

		\vspace{1em}
		
		\section{Destruction of $\Gamma_\omega$ for $F_{m,a}$}\label{3}
		
		\subsection{Basic tools from Aubry--Mather theory}
		Let $G$ be a diffeomorphism of $\R^2$ denoted by \break $G(x,y)=(X(x,y),Y(x,y))$. Let $G$ satisfy:
		\begin{itemize}
			\item {\it Lift condition:} $G$ is isotopic to the identity;
			\item {\it Twist condition:} the map $\psi:(x,y)\mapsto(x,X(x,y))$ is a diffeomorphism of $\R^2$;
			\item {\it Exact symplectic:} there exists a real-valued function $h$ on $\R^2$ with $h(x+1,y)=h(x,y)$ such that
			\[YdX-ydx=dh.\]
		\end{itemize}
		Then $G$ induces a map on the cylinder denoted by $g$: $\T\times\R\mapsto \T\times\R$. $g$ is called an exact
		area-preserving  twist map. The function  $h$: $\R^2\rightarrow\R$ is called
		a generating function of $G$, namely $G$
		is generated by the following equations
		\begin{equation*}
			\begin{cases}
				y=-\partial_1 h(x,x'),\\
				y'=\partial_2 h(x,x'),
			\end{cases}
		\end{equation*}
		where $G(x,y)=(x',y')$.
		\subsubsection{\bf Minimal configuration}
		The function $G$ gives rise to a dynamical
		system whose orbits are given by the images of points of $\R^2$
		under the successive iterates of $G$. The orbit of the point
		$(x_0,y_0)$ is the bi-infinite sequence
		\[\{...,(x_{-k},y_{-k}),...,(x_{-1},y_{-1}),(x_0,y_0),(x_1,y_1),...,(x_k,y_k),...\},\]
		where $(x_k,y_k)=G(x_{k-1},y_{k-1})$. The sequence
		\[(...,x_{-k},...,x_{-1},x_0,x_1,...,x_k,...)\] denoted by $(x_i)_{i\in\Z}$ is called a
		stationary configuration if it satisfies the identity
		\[\partial_1 h(x_i,x_{i+1})+\partial_2 h(x_{i-1},x_i)=0,\ \text{for\ every\ }i\in\Z.\]
		Given a sequence of points $(z_i,...,z_j)$, we can associate its
		action
		\[h(z_i,...,z_j)=\sum_{i\leq s<j}h(z_s,z_{s+1}).\] A configuration $(x_i)_{i\in\Z}$
		is called minimal if for any $i<j\in \Z$, the segment
		$(x_i,...,x_j)$ minimizes $h(z_i,...,z_j)$ among all segments
		$(z_i,...,z_j)$ of the configuration  satisfying $z_i=x_i$ and
		$z_j=x_j$. It is easy to see that every minimal configuration is a
		stationary configuration. There is a visual way to describe  configurations. A configuration $(x_i)_{i\in\Z}$ is a function from $\Z$ to $\R$. One can interpolate this function linearly and obtain a piecewise affine function $\R\rightarrow\R$ denoted by $t\mapsto x_t$. The graph of this function is sometimes called the Aubry diagram of the configuration.
		By \cite{B}, minimal configurations satisfy a
		group of remarkable properties as follows:
		\begin{itemize}
			\item Two distinct minimal configurations seen as the Aubry diagrams cross at most once, which
			is so called Aubry's crossing lemma.
			\item For every minimal configuration $\bold{x}=(x_i)_{i\in\Z}$, the limit
			\[\rho(\bold{x}):=\lim_{n\rightarrow\infty}\frac{x_{i+n}-x_i}{n}\]
			exists and doesn't depend on $i\in\Z$. $\rho(\bold{x})$ is called
			the frequency of $\bold{x}$.
			\item For every $\omega\in \R$, there exists a minimal configuration
			with frequency $\omega$. Following the notations of \cite{B}, the
			set of all minimal configurations with frequency $\omega$ is
			denoted by $M_\omega^h$, which can be endowed with the topology
			induced from the product topology on $\R^\Z$. If
			$\bold{x}=(x_i)_{i\in\Z}$ is a minimal configuration, considering
			the projection $pr:\ M_\omega^h\rightarrow\R$ defined by
			$pr(\bold{x})=x_0$, we set $\mathcal {A}_\omega^h=pr(M_\omega^h)$.
			\item If $\omega\in\Q$, say $\omega=p/q$ (in lowest terms), then it is convenient to define the rotation symbol to detect the structure of
			$M_{p/q}^h$. If $\bold{x}$ is a minimal configuration with frequency $p/q $, then the rotation symbol $\sigma(\bold{x})$ of
			$\bold{x}$ is defined as follows
			\begin{equation*}
				\sigma(\bold{x}):=\left\{\begin{array}{ll}
					\hspace{-0.4em}p/q+,&\text{if}\ x_{i+q}>x_i+p\ \text{for\ all\ }i,\\
					\hspace{-0.4em}p/q,&\text{if}\ x_{i+q}=x_i+p\ \text{for\ all\ }i,\\
					\hspace{-0.4em}p/q-,&\text{if}\ x_{i+q}<x_i+p\ \text{for\ all\ }i.\\
				\end{array}\right.
			\end{equation*}
			Moreover, we set
			\begin{align*}
				&M_{{p/q}^+}^h:=\{\bold{x} \text{\  is a minimal configuration with
					rotation symbol}\  p/q \text{\ or\ } p/q+\},\\
				&M_{{p/q}^-}^h:=\{\bold{x} \text{\ is a minimal configuration with
					rotation symbol}\  p/q \text{\ or\ } p/q-\},
			\end{align*}
			then both $M_{{p/q}^+}^h$ and $M_{{p/q}^+}^h$ are totally ordered.
			Namely, every two configurations in each of them (seen as Aubry diagrams) do not cross. We
			denote $pr(M_{{p/q}^+}^h)$ and $pr(M_{{p/q}^-}^h)$ by $\mathcal
			{A}_{{p/q}^+}^h$ and $\mathcal {A}_{{p/q}^-}^h$ respectively.
			\item If $\omega\in\R\backslash\Q$ and $\bold{x}$ is a minimal
			configuration with frequency $\omega$, then
			$\sigma(\bold{x})=\omega$ and $M_\omega^h$ is totally ordered.
			\item $\mathcal {A}_\omega^h$ is a closed subset of $\R$ for every rotation symbol
			$\omega$.
		\end{itemize}
		\subsubsection{\bf Peierls's barrier}\label{persb}
		In \cite{M3}, Mather introduced the notion of Peierls's barrier and gave
		a criterion of existence of invariant circle. Namely, the exact
		area-preserving  twist map generated by $h$ admits an
		invariant circle with frequency $\omega$ if and only if the
		Peierls barrier $P_\omega^h(\xi)$ vanishes identically for all
		$\xi\in\R$. The Peierls barrier is defined as follows:
		\begin{itemize}
			\item If $\xi\in \mathcal {A}_\omega^h$, we set $P_\omega^h(\xi)$=0.
			\item If $\xi \not\in \mathcal {A}_\omega^h$, since $\mathcal {A}_\omega^h$ is a closed set in $\R$, then $\xi$ belongs to some
			complementary interval $(\xi^-,\xi^+)$ of $\mathcal {A}_\omega^h$ in
			$\R$. By the definition of $\mathcal {A}_\omega^h$, there exist
			minimal configurations with rotation symbol $\omega$,
			$\bold{x^-}=(x_i^-)_{i\in\Z}$ and $\bold{x^+}=(x_i^+)_{i\in\Z}$
			satisfying $x_0^-=\xi^-$ and $x_0^+=\xi^+$. For every configuration
			$\bold{x}=(x_i)_{i\in\Z}$ satisfying $x_i^-\leq x_i\leq x_i^+$, we
			set
			\[G_\omega(\bold{x}):=\sum_I(h(x_i,x_{i+1})-h(x_i^-,x_{i+1}^-)),\]
			where $I=\Z$, if $\omega$ is not a rational number, and $I=\{0,...,
			q-1 \}$, if $\omega=p/q$. $P_\omega^h(\xi)$ is defined as the
			minimum of $G_\omega(\bold{x})$ over the configurations $\bold{x}\in
			\Pi=\prod_{i\in I}[x_i^-,x_i^+]$ satisfying $x_0=\xi$. Namely
			\[P_\omega^h(\xi):=\min_{\bold{x}}\{G_\omega(\bold{x})|\bold{x}\in \Pi\ \text{and}\ \ x_0=\xi\}.\]
		\end{itemize}
		By \cite{M3}, $P_\omega^h(\xi)$ is a non-negative periodic function of
		the variable $\xi\in\R$ with the modulus of continuity with respect
		to $\omega$ and its modulus of continuity with respect to $\omega$ can be bounded from above. Due to the periodicity of $P_\omega^h(\xi)$ with
		respect to $\xi$, we only need to consider it in the interval $[0,1]$.
		\subsection{A lower bound of the Peierls barrier}
		Recall \begin{equation*}
			h_m(x, x'): = \frac{1}{2}(x - x')^2 + \delta_m (1 - \cos 2\pi x').
		\end{equation*}
		\begin{equation*}
			H_m(x, x'): = \frac{1}{2}(x - x')^2 + \delta_m (1 - \cos 2\pi x') + \xi_m(x'),
		\end{equation*}
		Let
		\[
		u_m(x) := \delta_m(1 - \cos 2\pi x) = \exp(-b m^\beta)(1 - \cos 2\pi x).
		\]
		We first estimate the lower bound of the Peierls barrier \(P_{0^+}^{H_m}\) at a given point. To do this, we need to estimate the distances between pairwise adjacent elements of the minimal configuration.
		More precisely, we have:
		
		\begin{Lemma}\label{lowstep}
			Let \((x_i)_{i \in \mathbb{Z}}\) be a minimal configuration of \(h_m\) with rotation symbol \(\omega > 0\). Then
			\[
			x_{i+1} - x_i \geq \frac{1}{2} \Delta_m \quad \text{for} \quad x_i \in \left[\frac{1}{4}, \frac{3}{4}\right].
			\]
		\end{Lemma}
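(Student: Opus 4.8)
The plan is to exploit the stationary equation for $h_m$ together with the smallness of $\delta_m$. Recall that a stationary configuration of $h_m$ satisfies $\partial_1 h_m(x_i,x_{i+1})+\partial_2 h_m(x_{i-1},x_i)=0$; with the explicit form $h_m(x,x')=\tfrac12(x-x')^2+\delta_m(1-\cos 2\pi x')$ this becomes the discrete Newton equation
\[
x_{i+1}-2x_i+x_{i-1}=2\pi\delta_m\sin 2\pi x_i,
\]
i.e. $(x_{i+1}-x_i)-(x_i-x_{i-1})=2\pi\delta_m\sin 2\pi x_i$. Writing $s_i:=x_{i+1}-x_i$ for the ``steps,'' we get $s_i-s_{i-1}=2\pi\delta_m\sin 2\pi x_i$, so consecutive steps differ by at most $2\pi\delta_m$. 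The key point is then to rule out the possibility that $s_i$ is small (of order $\Delta_m=\sqrt{\delta_m}$ or less) when $x_i\in[\tfrac14,\tfrac34]$.

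First I would argue that since $(x_i)$ is minimal with rotation symbol $\omega>0$, it is in particular monotone: $s_i>0$ for all $i$ (a configuration with positive rotation symbol cannot have $x_{i+1}\le x_i$, by the crossing lemma comparison with the trivial configuration $x_i\mapsto x_i+\text{const}$ of the integrable map, or directly from the order properties listed in the excerpt). Next, suppose for contradiction that some $x_i\in[\tfrac14,\tfrac34]$ has $s_i<\tfrac12\Delta_m$. Using $|s_{i+1}-s_i|\le 2\pi\delta_m$ repeatedly, the steps stay below $\tfrac12\Delta_m+2\pi k\delta_m$ for $k$ iterates; since $\delta_m=\Delta_m^2\ll\Delta_m$, the steps remain of size $O(\Delta_m)$ for roughly $k\sim \Delta_m/\delta_m=\Delta_m^{-1}$ iterates in either direction. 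Over that window the orbit advances by only $O(k\Delta_m)=O(1)$ in $x$ — one must be quantitative here — so one can trap a long sub-orbit inside a region where $\sin 2\pi x_i$ has a definite sign (say the orbit stays in $[\tfrac14,\tfrac34+c]$ where $\sin 2\pi x\le 0$ on the relevant part, or more carefully one localizes near a point where the forcing has fixed sign). Then $s_i-s_{i-1}=2\pi\delta_m\sin 2\pi x_i$ has a fixed sign along the window, forcing $s_i$ to be monotone there; combined with positivity and the assumed smallness this produces either a step that becomes negative (contradicting monotonicity) or an orbit that fails to cross the interval $[\tfrac14,\tfrac34]$ with a positive asymptotic rate (contradicting $\omega>0$).

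The cleanest version of this is an energy/comparison argument: near a minimum of the step $s_i$, the forcing term $2\pi\delta_m\sin 2\pi x_i$ acts like the linearization of the pendulum, and the ``width'' of the region in which an orbit can linger with small step is governed by the separatrix scale, which for forcing amplitude $\delta_m$ is exactly $\sqrt{\delta_m}=\Delta_m$ — this is why the bound $\tfrac12\Delta_m$ (rather than, say, $\delta_m$) is the natural threshold. Concretely, I would compare $(x_i)$ with an explicit minimal configuration of the pendulum-type map whose minimal step is known to be $\asymp\Delta_m$ in the region $[\tfrac14,\tfrac34]$ (the phase-space picture: any minimal orbit crossing the oscillation region of the pendulum must do so with transverse speed at least the separatrix value at the boundary of $[\tfrac14,\tfrac34]$), and invoke Aubry's crossing lemma to transfer the lower bound: two minimal configurations cross at most once, so if $(x_i)$ dipped below $\tfrac12\Delta_m$ at some $x_i\in[\tfrac14,\tfrac34]$ while the comparison configuration stays above it, the Aubry diagrams would be forced to cross more than once.

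The main obstacle is making the ``long window / trapping'' estimate genuinely quantitative and uniform: one must show that the number of iterates over which the orbit is confined to a region of favorable sign of $\sin 2\pi x$ is large enough (of order $\Delta_m^{-1}$) to overwhelm the perturbation, and that the error accumulated from $|s_{i+1}-s_i|\le 2\pi\delta_m$ over that many steps stays $o(\Delta_m)$ — this is precisely the borderline regime $\delta_m\cdot\Delta_m^{-1}=\Delta_m=o(1)$, so the constants matter and one needs to choose the sub-interval (slightly larger than $[\tfrac14,\tfrac34]$, or a neighborhood of $x=\tfrac12$ where $\sin 2\pi x$ is bounded away from $0$) carefully to keep the sign condition and the advance estimate simultaneously in force. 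Once that is set up, the contradiction with either monotonicity ($s_i>0$) or with $\rho=\omega>0$ closes the argument.
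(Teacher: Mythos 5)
Your proposal does not close; it is a sketch of a dynamical trapping argument that you yourself flag as stuck in a borderline regime, and it misses the one idea that makes this lemma elementary. The paper's proof is a \emph{single-step minimality comparison}: delete the point $x_i$ from the configuration (i.e.\ compare $(x_j)_j$ with $(\xi_j)_j$ where $\xi_j = x_j$ for $j<i$ and $\xi_j = x_{j+1}$ for $j\ge i$). Minimality forces
\[
0 \le h_m(x_{i-1},x_{i+1}) - h_m(x_{i-1},x_i) - h_m(x_i,x_{i+1}) = (x_{i+1}-x_i)(x_i-x_{i-1}) - u_m(x_i),
\]
so $s_{i-1}s_i \ge u_m(x_i)$ where $s_i := x_{i+1}-x_i$. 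By AM--GM, $s_{i-1}+s_i \ge 2\sqrt{u_m(x_i)} \ge 2\Delta_m$ on $[\tfrac14,\tfrac34]$. Combined with the stationary identity $s_i - s_{i-1} = u_m'(x_i) = O(\delta_m) = O(\Delta_m^2)$, one gets $2s_i = (s_{i-1}+s_i) + (s_i-s_{i-1}) \ge 2\Delta_m - O(\Delta_m^2) \ge \Delta_m$ for $m$ large, and the lemma follows. No iteration, no sign-of-forcing localization, no window.

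By contrast, your propagation argument ($|s_{i+1}-s_i|\le 2\pi\delta_m$ over $\sim\Delta_m^{-1}$ iterates) accumulates a drift of order $\delta_m\cdot\Delta_m^{-1}=\Delta_m$, exactly the size of the threshold you want to establish, so — as you correctly note — the constants do not obviously work out, and the argument does not terminate. The Aubry-crossing alternative you float is also problematic as stated: the crossing lemma constrains the Aubry diagrams (the values $x_i$), not their slopes $s_i$; converting a lower bound on steps of a reference configuration into one on steps of $(x_i)$ through a ``crosses at most once'' statement is not a direct inference. The structural lesson here is that for quantitative lower bounds of this kind, the Aubry--Mather toolkit favors \emph{comparison of actions under a local surgery} (here: deletion of a single point) rather than long-time dynamical control of the discrete Newton equation; your heuristic about the $\sqrt{\delta_m}$ separatrix scale is right, but the efficient way to realize it is via the product inequality $s_{i-1}s_i \ge u_m(x_i)$, which is precisely the discrete ``energy'' statement you were looking for, obtained in one line rather than by trapping.
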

		
		\begin{proof}
			Without loss of generality, assume \(x_i \in [0, 1]\) for all \(i \in \mathbb{Z}\). By Aubry's crossing lemma, we have
			\[
			\cdots < x_{i-1} < x_i < x_{i+1} < \cdots.
			\]
			Consider the configuration \((\xi_i)_{i \in \mathbb{Z}}\) defined by
			\[
			\xi_j =
			\begin{cases}
				x_j, & j < i, \\
				x_{j+1}, & j \geq i.
			\end{cases}
			\]
			Since \((x_i)_{i \in \mathbb{Z}}\) is minimal, we have
			\[
			\sum_{i \in \mathbb{Z}} h_m(\xi_i, \xi_{i+1}) - \sum_{i \in \mathbb{Z}} h_m(x_i, x_{i+1}) \geq 0.
			\]
			By the definitions of \(h_m\) and \((\xi_i)_{i \in \mathbb{Z}}\), we obtain
			\begin{align*}
				0 &\leq \sum_{i \in \mathbb{Z}} h_m(\xi_i, \xi_{i+1}) - \sum_{i \in \mathbb{Z}} h_m(x_i, x_{i+1}) \\
				&= h_m(x_{i-1}, x_{i+1}) - h_m(x_{i-1}, x_i) - h_m(x_i, x_{i+1}) \\
				&= (x_{i+1} - x_i)(x_i - x_{i-1}) - u_m(x_i).
			\end{align*}
			Moreover,
			\[
			u_m(x_i) \leq (x_{i+1} - x_i)(x_i - x_{i-1}) \leq \frac{1}{4}(x_{i+1} - x_{i-1})^2.
			\]
			Therefore,
			\begin{equation}\label{uwith}
				x_{i+1} - x_{i-1} \geq 2 \sqrt{u_m(x_i)}.
			\end{equation}
			For \(x_i \in \left[\frac{1}{4}, \frac{3}{4}\right]\), we have \(u_m(x_i) \geq \Delta_m^2\), hence
			\begin{equation}\label{ls}
				x_{i+1} - x_{i-1} \geq 2 \Delta_m.
			\end{equation}
			
			Since \((x_i)_{i \in \mathbb{Z}}\) is a stationary configuration, we have
			\begin{align*}
				x_{i+1} - x_i &= -\partial_1 h_m(x_i, x_{i+1}) \\
				&= \partial_2 h_m(x_{i-1}, x_i) \\
				&= x_i - x_{i-1} + u_m'(x_i).
			\end{align*}
			As \(u_m'(x) = \exp(-b m^\beta) \sin x\), it follows from (\ref{ls}) that
			\[
			x_{i+1} - x_i \geq \frac{1}{2} \Delta_m, \quad \text{for } x_i \in \left[\frac{1}{4}, \frac{3}{4}\right].
			\]
			This completes the proof of Lemma \ref{lowstep}.
		\end{proof}

		Let \((x_i)_{i \in \mathbb{Z}}\) be a minimal configuration of
		\[
		h_m(x_i, x_{i+1}) = h_0(x_i, x_{i+1}) + u_m(x_{i+1})
		\]
		with rotation symbol \(0^+\). From Lemma \ref{lowstep}, we have
		\[
		x_{i+1} - x_i \geq \frac{1}{2} v_m\left(\frac{1}{2}\right), \quad \text{for } x_i \in \left[\frac{1}{4}, \frac{3}{4}\right].
		\]
		Hence, there exists \(\tau \in \left[\frac{3}{8}, \frac{5}{8}\right]\) such that
		\begin{equation}\label{taumo}
			(x_i)_{i \in \mathbb{Z}} \cap \text{supp } \xi_m = \emptyset.
		\end{equation}
		Moreover, for all \(i \in \mathbb{Z}\),
		\[
		\xi_m(x_i) = 0.
		\]
		
		Based on \cite{M4} (pp. 207--208), the Peierls barrier \(P_{0^+}^{H_m}(\tau)\) is defined as
		\[
		P_{0^+}^{H_m}(\tau) = \min_{\xi_0 = \tau} \sum_{i \in \mathbb{Z}} H_m(\xi_i, \xi_{i+1}) - \min \sum_{i \in \mathbb{Z}} H_m(z_i, z_{i+1}),
		\]
		where \((\xi_i)_{i \in \mathbb{Z}}\) and \((z_i)_{i \in \mathbb{Z}}\) are monotone increasing configurations limiting to \(0\) and \(1\), respectively.
		
		Let \((\xi_i)_{i \in \mathbb{Z}}\) and \((z_i)_{i \in \mathbb{Z}}\) be minimal configurations of \(H_m\) (defined in (\ref{h})) with rotation symbol \(0^+\), satisfying \(\xi_0 = \tau\). Then we have
		\begin{align*}
			\sum_{i \in \mathbb{Z}} &\left(H_m(\xi_i, \xi_{i+1}) - H_m(z_i, z_{i+1})\right) \\
			&\geq \xi_m(\tau) + \sum_{i \in \mathbb{Z}} h_m(\xi_i, \xi_{i+1}) - \sum_{i \in \mathbb{Z}} H_m(z_i, z_{i+1}) \\
			&\geq \xi_m(\tau) + \sum_{i \in \mathbb{Z}} h_m(x_i, x_{i+1}) - \sum_{i \in \mathbb{Z}} H_m(z_i, z_{i+1}) \\
			&\geq \xi_m(\tau) + \sum_{i \in \mathbb{Z}} h_m(x_i, x_{i+1}) - \sum_{i \in \mathbb{Z}} H_m(x_i, x_{i+1}) \\
			&= \xi_m(\tau) - \sum_{i \in \mathbb{Z}} \xi_m(x_{i+1}) \\
			&= \xi_m(\tau),
		\end{align*}
		where the first inequality holds because \(\xi_m \geq 0\), the second because \((x_i)_{i \in \mathbb{Z}}\) is a minimal configuration of \(h_m\), the third because \((z_i)_{i \in \mathbb{Z}}\) is a minimal configuration of \(H_m\), and the last equality because \(\xi_m(x_i) = 0\) for all \(i \in \mathbb{Z}\).
		
		Therefore,
		\[
		P_{0^+}^{H_m}(\tau) \geq \xi_m(\tau).
		\]
		It follows that
		\begin{equation}\label{lowb}
			P_{0^+}^{H_m}(\tau) \geq \Delta_m^2 \exp\left(-\lambda 2^{\frac{3}{\alpha-1} + \frac{1}{2}} \exp\left(\frac{b}{2(\alpha-1)} m^\beta\right)\right).
		\end{equation}

		\subsection{The modulus of continuity of the Peierls barrier}

		Following a similar argument as \cite{W}, one can obtain an improvement in the modulus of continuity of the Peierls barrier based on the hyperbolicity of \(H_m\). More precisely, we have the following lemma.
		
		\begin{Lemma}\label{pw}
			Fix \(a > 1\), $b>0$ and \(0 < \varepsilon \ll 1\). For every irrational rotation symbol \(\omega\) satisfying
			\[
			0 < \omega < \exp\left(-\left(\frac{b}{2} + \frac{b}{a}\varepsilon\right) m^\beta\right),
			\]
			we have
			\begin{equation}\label{app}
				\left|P_{\omega}^{H_m}(\tau) - P_{0^+}^{H_m}(\tau)\right| \leq C \exp\left(-2 \exp\left(\frac{b \varepsilon}{2a} m^\beta\right)\right),
			\end{equation}
			where \(\tau \in [3/8, 5/8]\).
		\end{Lemma}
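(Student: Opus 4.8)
The plan is to transplant to the family $H_m$ the hyperbolicity-based improvement of the modulus of continuity of the Peierls barrier from \cite{W}, and then to substitute the numerology of the present construction. First I would record the relevant hyperbolic structure: the twist map generated by $h_m$ has a hyperbolic fixed point where $1-\cos 2\pi x$ vanishes, i.e. at $x=0$, realizing the minimal configuration of rotation symbol $0$, and its hyperbolicity exponent is $\kappa_m\asymp\sqrt{\delta_m}=\Delta_m$. Since $\operatorname{supp}\xi_m$ is an $\tfrac14\Delta_m$-neighbourhood of $\tau\in[3/8,5/8]$, hence uniformly bounded away from $x=0$, the map generated by $H_m$ coincides with the one generated by $h_m$ on a fixed neighbourhood of that point; thus $F_{m,a}$ retains the same hyperbolic fixed point, the same local stable and unstable manifolds and the same exponent $\kappa_m$, and the problem is to compare $P_\omega^{H_m}(\tau)$ with $P_{0^+}^{H_m}(\tau)$ as $\omega\to 0^+$.

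The core estimate, obtained exactly as in \cite{W}, is that the hyperbolicity of the fixed orbit upgrades Mather's general modulus of continuity of $\omega\mapsto P_\omega^{H_m}(\tau)$ near $\omega=0$ to the exponential bound
\[
\bigl|P_\omega^{H_m}(\tau)-P_{0^+}^{H_m}(\tau)\bigr| \le C_m\exp\bigl(-c\,\kappa_m/\omega\bigr),
\]
for some $c>0$ and a prefactor $C_m$ growing at most polynomially in $\kappa_m^{-1}$. The mechanism is the classical separatrix picture. A minimal configuration of small frequency $\omega>0$ sits just above the separatrix of the hyperbolic fixed point: it spends $\asymp\kappa_m^{-1}\log(1/\omega)$ consecutive iterates inside a small neighbourhood of that point, shadowing its local stable and unstable manifolds with error $O(e^{-c\kappa_m/\omega})$, and its closest approach to the fixed point is $O(e^{-c\kappa_m/\omega})$. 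Cutting such a configuration near this closest approach and regluing it along the true separatrix turns it into an admissible competitor for the $0^+$-barrier; conversely one unwinds the $0^+$-minimizer into a frequency-$\omega$ competitor by inserting slow revolutions near the saddle. Because the action is stationary at minimizers, the induced change of action is quadratic in the $O(e^{-c\kappa_m/\omega})$ mismatch, hence $O(e^{-c\kappa_m/\omega})$ after summing the (polynomially many) reglued steps and the analogous correction to the reference configuration. The bump enters both barriers in the same way — one forced crossing at $\tau$ contributing $\xi_m(\tau)$, all other near-crossings taking place at points where $\xi_m$ is super-exponentially small because $(x_i)_{i\in\mathbb{Z}}\cap\operatorname{supp}\xi_m=\emptyset$ by \eqref{taumo} while the separatrix speed near $\tau$ is $\asymp\Delta_m$ against a support of width $\tfrac14\Delta_m$ — so its contribution cancels in the difference and only the hyperbolic slop remains.

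It then remains to insert the numbers. With $\kappa_m\asymp\Delta_m=\exp(-\tfrac b2 m^\beta)$ and $\omega<\exp(-(\tfrac b2+\tfrac ba\varepsilon)m^\beta)$ one has $\kappa_m/\omega\gtrsim\exp(\tfrac ba\varepsilon m^\beta)=\bigl(\exp(\tfrac{b\varepsilon}{2a}m^\beta)\bigr)^{2}$, so
\[
C_m\exp(-c\,\kappa_m/\omega) \le \exp\bigl(O(m^\beta)\bigr)\exp\Bigl(-c\bigl(\exp(\tfrac{b\varepsilon}{2a}m^\beta)\bigr)^{2}\Bigr) \le C\exp\bigl(-2\exp(\tfrac{b\varepsilon}{2a}m^\beta)\bigr)
\]
for all $m$ large: squaring the single exponential $\exp(\tfrac{b\varepsilon}{2a}m^\beta)$ absorbs both the constant $2$ and the single-exponential prefactor, which is exactly why the crude constant $2$ in the statement is affordable. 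This yields \eqref{app}. The main obstacle is precisely the step that goes beyond \cite{W}: there the ambient system is fixed, whereas here $\kappa_m\to 0$ as $m\to\infty$, so the whole comparison must be made uniform in the hyperbolicity exponent — one has to verify that the size of the linearizing chart at the fixed point, the shadowing constants, and the prefactor $C_m$ all degrade only polynomially in $\kappa_m^{-1}$, which is harmless against the double-exponential gain coming from the extreme smallness of $\omega$. A secondary technical point to check carefully is the bump bookkeeping, i.e. that no configuration entering either barrier meets the high part of $\operatorname{supp}\xi_m$ at any time $\ne 0$.
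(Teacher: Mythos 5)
Your plan is, at bottom, the same one the paper uses: both you and the authors take the hyperbolicity-based improvement of the modulus of continuity from \cite{W}, exploit the fact that the orbit of rotation number $\omega$ spends almost all of its period near the hyperbolic fixed point, cut and paste configurations there with a quadratic error in the mismatch, and finally substitute the numerology $\Delta_m = e^{-\frac{b}{2}m^\beta}$, $\omega < e^{-(\frac{b}{2}+\frac{b\varepsilon}{a})m^\beta}$. So this is essentially the paper's argument, but the way you package the quadratic error is different and worth contrasting.

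The paper does not work at the actual closest approach of the orbit. It fixes a much coarser cut-off $\epsilon_m := \exp\bigl(-\exp(\tfrac{b\varepsilon}{2a}m^\beta)\bigr)$, uses the counting lemmas of \cite{W} (Lemmas 5.2--5.4 there) to show that $\Sigma_m = \{i: x_i \in [\epsilon_m, 1-\epsilon_m]\}$ has at most $C\exp(\tfrac{b}{2}(1+\tfrac{\varepsilon}{a})m^\beta) \ll 1/\omega$ elements so that the orbit certainly dips below level $\epsilon_m$, and then runs the three-step cut-and-paste of \cite{W} at level $\epsilon_m$ to get $C\epsilon_m^2$. This is a purely discrete, variational argument built on Aubry's crossing lemma and Mather's description of $P_{0^+}$ as a difference of infinite actions; no linearizing chart, no shadowing estimate, no separatrix geometry is invoked. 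Your version instead pushes the cut down to the actual closest approach $d \sim e^{-c\kappa_m/\omega}$ via a shadowing heuristic, which if made rigorous would give the formally sharper bound $\exp(-c'\exp(\tfrac{b\varepsilon}{a}m^\beta))$; the paper's milder $\exp(-2\exp(\tfrac{b\varepsilon}{2a}m^\beta))$ is exactly what it needs for the contradiction with \eqref{lowb} under $\alpha > 1+\tfrac{a}{\varepsilon}$, so it has no reason to work harder. The trade-off is that the paper's route sidesteps precisely the ``main obstacle'' you flag: by choosing $\epsilon_m$ so generously, the counting lemmas of \cite{W} carry over verbatim with absolute constants, and one never needs to show that a linearizing chart or shadowing constant is uniform in $\kappa_m\to 0$.

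Two small inaccuracies in your sketch. First, an orbit of rotation number $\omega$ spends $\asymp 1/\omega$ iterates (essentially the whole period), not $\asymp \kappa_m^{-1}\log(1/\omega)$, inside a fixed neighbourhood of the hyperbolic fixed point; $\kappa_m^{-1}\log(1/\epsilon_m)$ is the transit time spent \emph{outside} the $\epsilon_m$-neighbourhood, which is the quantity the paper's $\sharp\Sigma_m$ estimate controls. Second, the ``bump bookkeeping'' you describe is really the content of the separate lower-bound estimate \eqref{lowb} for $P_{0^+}^{H_m}(\tau)$, not of this lemma: here both barriers are computed for the same generating function $H_m$ (bump included), and the minimal configurations of $H_m$ need not avoid $\operatorname{supp}\xi_m$; the relation \eqref{taumo} applies to the $h_m$-minimizer, not to these. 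The lemma is purely a modulus-of-continuity statement in $\omega$, and the bump is just along for the ride.
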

		
		\begin{proof}
			If \(\tau \in \mathcal{A}_\omega^{H_m}\), then \(P_\omega^{H_m}(\tau) = 0\). Hence, it suffices to consider the case \(\tau \notin \mathcal{A}_\omega^{H_m}\). Since the proof of Lemma \ref{pw} is similar to Lemma 5.1 in \cite{W}, we only provide a sketch to highlight the main differences. For simplicity, denote
			\[
			\epsilon_m := \exp\left(-\exp\left(\frac{b \varepsilon}{2a} m^\beta\right)\right).
			\]
			The proof proceeds in three steps.
			
			\textbf{Step 1.} We show that each of the intervals \([0, \epsilon_m]\) and \([1 - \epsilon_m, 1]\) contains a large number of elements of the minimal configuration \((x_i)_{i \in \mathbb{Z}}\) of \(H_m\) with irrational rotation symbol
			\[
			0 < \omega < \exp\left(-\left(\frac{b}{2} + \frac{b}{a}\varepsilon\right) m^\beta\right).
			\]
			Let
			\[
			\Sigma_m = \left\{i \in \mathbb{Z} \mid x_i \in \left[\epsilon_m, 1 - \epsilon_m\right]\right\}.
			\]
			By an argument similar to Lemma 5.2 in \cite{W}, we have
			\begin{equation}\label{sig}
				\sharp \Sigma_m \leq C \exp\left(\frac{b}{2}\left(1 + \frac{\varepsilon}{a}\right) m^\beta\right),
			\end{equation}
			where \(\sharp \Sigma_m\) denotes the number of elements in \(\Sigma_m\). Let \(I\) be an interval of length \(1\), and define \(\Omega_\omega := \{i \in \mathbb{Z} \mid x_i \in I\}\). Since \((x_i)_{i \in \mathbb{Z}}\) is a minimal configuration with irrational rotation number \(\omega\), Lemma 5.3 in \cite{W} implies
			\[
			\frac{1}{\omega} - 1 \leq \sharp \Omega_\omega \leq \frac{1}{\omega} + 1.
			\]
			Combining this with (\ref{sig}), we obtain
			\[
			\sharp \Omega_\omega \geq C \exp\left(\left(\frac{b}{2} + \frac{b}{a}\varepsilon\right) m^\beta\right) \gg C \exp\left(\frac{b}{2}\left(1 + \frac{\varepsilon}{a}\right) m^\beta\right).
			\]
			Hence, for large \(m\), each of the intervals \([0, \epsilon_m]\) and \([1 - \epsilon_m, 1]\) contains many elements of \((x_i)_{i \in \mathbb{Z}}\) (see Lemma 5.4 in \cite{W}).
			
			\textbf{Step 2.} We approximate \(P_\omega^{H_m}(\tau)\) for \(\tau \in [3/8, 5/8]\) by the difference of actions of segments of a given length. Let \((\xi^-, \xi^+)\) be the complementary interval of \(\mathcal{A}_\omega^{H_m}\) in \(\mathbb{R}\) containing \(\tau\). Let \((\xi_i^\pm)_{i \in \mathbb{Z}}\) be minimal configurations with rotation symbol \(\omega\) satisfying \(\xi_0^\pm = \xi^\pm\), and let \((\xi_i)_{i \in \mathbb{Z}}\) be a minimal configuration with rotation symbol \(\omega\) satisfying \(\xi_0 = \tau\) and \(\xi_i^- \leq \xi_i \leq \xi_i^+\). Define \(d(x) := \min\{|x|, |x - 1|\}\). By Step 1, there exist \(i^-, i^+\) such that
			\begin{equation}\label{5ww}
				d(\xi_i^-) < \epsilon_m \quad \text{and} \quad \xi_{i+1}^- - \xi_{i-1}^- \leq \epsilon_m \quad \text{for} \quad i = i^-, i^+.
			\end{equation}
			By Aubry's crossing lemma, \(\xi_i^- \leq \xi_i \leq \xi_i^+ \leq \xi_{i+1}^-\), so
			\[
			\xi_i - \xi_i^- \leq \epsilon_m \quad \text{for} \quad i = i^-, i^+.
			\]
			Define the configuration:
			\[
			y_i =
			\begin{cases}
				\xi_i, & i^- < i < i^+, \\
				\xi_i^-, & i \leq i^-,\ i \geq i^+.
			\end{cases}
			\]
			Since \(\tau \in [3/8, 5/8] \subset [\epsilon_m, 1 - \epsilon_m]\) for large \(m\), the point \(\xi_0 = \tau\) is contained in \((y_i)_{i \in \mathbb{Z}}\) up to index rearrangement. By a direct calculation (see (11)--(15) in \cite{W}), we have
			\begin{equation}\label{step1}
				P_\omega^{H_m}(\tau) \leq \sum_{i \in \mathbb{Z}} \left(H_m(y_i, y_{i+1}) - H_m(\xi_i^-, \xi_{i+1}^-)\right) \leq P_\omega^{H_m}(\tau) + C \epsilon_m^2.
			\end{equation}
			
			\textbf{Step 3.} We compare \(P_{0^+}^{H_m}(\tau)\) with \(\sum_{i \in \mathbb{Z}} \left(H_m(y_i, y_{i+1}) - H_m(\xi_i^-, \xi_{i+1}^-)\right)\). By \cite{M4},
			\[
			P_{0^+}^{H_m}(\tau) = \min_{\xi_0 = \tau} \sum_{i \in \mathbb{Z}} H_m(\xi_i, \xi_{i+1}) - \min \sum_{i \in \mathbb{Z}} H_m(z_i, z_{i+1}),
			\]
			where \((\xi_i)_{i \in \mathbb{Z}}\) and \((z_i)_{i \in \mathbb{Z}}\) are monotone increasing configurations limiting to \(0\) and \(1\), respectively. Let
			\[
			K(\tau) = \min_{\xi_0 = \tau} \sum_{i \in \mathbb{Z}} H_m(\xi_i, \xi_{i+1}), \quad K = \min \sum_{i \in \mathbb{Z}} H_m(z_i, z_{i+1}).
			\]
			By a direct calculation (see (17)--(28) in \cite{W}), we have
			\begin{equation}\label{13}
				\left|\sum_{i \in \mathbb{Z}} H_m(\xi_i^-, \xi_{i+1}^-) - K\right| \leq C \epsilon_m^2, \quad \left|\sum_{i \in \mathbb{Z}} H_m(y_i, y_{i+1}) - K(\tau)\right| \leq C \epsilon_m^2.
			\end{equation}
			From (\ref{step1}) and (\ref{13}), we obtain
			\begin{align*}
				\left|P_{\omega}^{H_m}(\tau) - P_{0^+}^{H_m}(\tau)\right| &\leq \left| \sum_{i \in \mathbb{Z}} H_m(y_i, y_{i+1}) - \sum_{i \in \mathbb{Z}} H_m(\xi_i^-, \xi_{i+1}^-) + K - K(\tau) \right| + C_1 \epsilon_m^2 \\
				&\leq \left| \sum_{i \in \mathbb{Z}} H_m(y_i, y_{i+1}) - K(\tau) \right| + \left| \sum_{i \in \mathbb{Z}} H_m(\xi_i^-, \xi_{i+1}^-) - K \right| + C_1 \epsilon_m^2 \\
				&\leq C \epsilon_m^2.
			\end{align*}
			This implies (\ref{app}), completing the proof of Lemma \ref{pw}.
		\end{proof}
		
		Comparing (\ref{lowb}) and (\ref{app}), we obtain a contradiction if \(\alpha > 1 + \frac{a}{\varepsilon}\). This completes the proof of Theorem \ref{Mthm1}.
		
		\vspace{1em}

		\section{A reduction of Theorem \ref{Mthm2}}\label{4}
		The analysis for a given irrational rotation number can be reduced to the case of a sufficiently small rotation number, and vice versa, via a finite covering argument \cite[p.~78, 4.8]{H1}. This reduction is made precise in the following lemma.
		
		\begin{Lemma}\label{Herm}
			Let \( H_P \) be a generating function of the form
			\[
			H_P(x,x') = H_0(x,x') + P(x'),
			\]
			where \( P \) is a \(1\)-periodic function. Define the rescaled function
			\[
			Q(x) = q^{-2} P(qx), \quad q \in \mathbb{N}.
			\]
			Then the exact area-preserving monotone twist map generated by
			\[
			H_Q(x,x') = H_0(x,x') + Q(x')
			\]
			admits an invariant circle with rotation number \( \omega \in \mathbb{R} \setminus \mathbb{Q} \)
			if and only if the exact area-preserving monotone twist map generated by \( H_P \)
			admits an invariant circle with rotation number \( q\omega - p \) for some \( p \in \mathbb{Z} \).
		\end{Lemma}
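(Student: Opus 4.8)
The plan is to transport the whole Aubry--Mather picture through the dilation $x\mapsto qx$ in configuration space, reduce the equivalence to Mather's criterion by means of an exact scaling relation for the Peierls barrier, and then account for the integer $p$ through the vertical-translation symmetry of the generated maps. Recall that $H_0(x,x')=\tfrac12(x-x')^2$, so that the maps generated by $H_P$ and $H_Q$ have the standard form $g_P(x,y)=(x+y,\,y+P'(x+y))$ and $g_Q(x,y)=(x+y,\,y+Q'(x+y))$ with $Q'(t)=q^{-1}P'(qt)$.

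First I would record the scaling identity. Since $Q(x')=q^{-2}P(qx')$, a direct computation gives $H_Q(x,x')=q^{-2}H_P(qx,qx')$, hence $H_Q(z_i,\dots,z_j)=q^{-2}H_P(qz_i,\dots,qz_j)$ for every finite segment. From the definition of minimality (for segments with prescribed endpoints, the correspondence $z\mapsto qz$ being a bijection) it follows at once that $\mathbf{x}=(x_i)_{i\in\Z}$ is a minimal configuration of $H_Q$ with frequency $\omega$ if and only if $q\mathbf{x}:=(qx_i)_{i\in\Z}$ is a minimal configuration of $H_P$ with frequency $q\omega$; this is a bijection of $M_\omega^{H_Q}$ onto $M_{q\omega}^{H_P}$ (inverse $\mathbf{y}\mapsto q^{-1}\mathbf{y}$). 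Consequently $\mathcal{A}_\omega^{H_Q}=q^{-1}\mathcal{A}_{q\omega}^{H_P}$, a complementary interval $(\xi^-,\xi^+)$ of $\mathcal{A}_\omega^{H_Q}$ corresponds to the complementary interval $(q\xi^-,q\xi^+)$ of $\mathcal{A}_{q\omega}^{H_P}$, and the two bounding minimal configurations $\mathbf{x}^\pm$ correspond to $q\mathbf{x}^\pm$.

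Next I would feed this into the definition of the Peierls barrier. For $\xi\in\mathcal{A}_\omega^{H_Q}$ both barriers vanish. For $\xi\notin\mathcal{A}_\omega^{H_Q}$, the dilation $\mathbf{x}\mapsto q\mathbf{x}$ maps the admissible product $\prod_i[x_i^-,x_i^+]$ subject to $x_0=\xi$ bijectively onto $\prod_i[qx_i^-,qx_i^+]$ subject to the constraint value $q\xi$, while the bi-infinite (convergent, by the usual asymptoticity of minimal configurations) action differences satisfy $G_\omega^{H_Q}(\mathbf{x})=q^{-2}G_{q\omega}^{H_P}(q\mathbf{x})$; taking the infimum on each side yields
\[
P_\omega^{H_Q}(\xi)=q^{-2}\,P_{q\omega}^{H_P}(q\xi),\qquad \xi\in\R.
\]
Since $\xi\mapsto q\xi$ is onto $\R$, this gives $P_\omega^{H_Q}\equiv 0$ on $\R$ if and only if $P_{q\omega}^{H_P}\equiv 0$ on $\R$, so by Mather's criterion $g_Q$ admits an invariant circle of rotation number $\omega$ if and only if $g_P$ admits an invariant circle of rotation number $q\omega$.

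Finally I would eliminate the leftover integer. Because $P'$ is $1$-periodic, $g_P$ commutes on $\T\times\R$ with the vertical translation $\tau(x,y)=(x,y+1)$, so $\Gamma\mapsto\tau(\Gamma)$ permutes the invariant circles of $g_P$ and raises the rotation number by exactly $1$; hence the existence of an invariant circle of $g_P$ of rotation number $q\omega$ is equivalent to the existence of one of rotation number $q\omega-p$ for some (equivalently, for every) $p\in\Z$. Combined with the previous step this is the assertion of the lemma. The argument is essentially bookkeeping; the only points I would treat with some care are the verification that $\mathbf{x}\mapsto q\mathbf{x}$ genuinely preserves minimality and matches the two Aubry sets together with their complementary intervals --- so that the Peierls barriers are compared term by term, including convergence of the bi-infinite sums --- and the recognition that the free parameter $p$ in the statement is exactly the ambiguity in the rotation number of an invariant circle produced by the vertical symmetry of a standard-form twist map with $1$-periodic perturbation.
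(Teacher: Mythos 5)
Your proof is correct. It is worth noting that it takes a somewhat different route from the one the paper implicitly relies on: the paper attributes this lemma to Herman's "finite covering argument," which works directly with the $q$-fold covering $(x,y)\mapsto(qx,qy)$ of the cylinder and the resulting semi-conjugacy $\Phi\circ g_Q=g_P\circ\Phi$, transporting invariant circles geometrically. You instead stay entirely within the variational framework of Section 3: the scaling identity $H_Q(x,x')=q^{-2}H_P(qx,qx')$ gives $P^{H_Q}_\omega(\xi)=q^{-2}P^{H_P}_{q\omega}(q\xi)$ on the nose, and Mather's criterion then does the rest, with the integer $p$ absorbed by the vertical-translation symmetry $\tau(x,y)=(x,y+1)$. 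This variational route has a mild advantage over the covering picture in the direction ``circle for $g_Q$ $\Rightarrow$ circle for $g_P$'': the image of a graph under $(x,y)\mapsto(qx,qy)$ is not a priori a graph on $\T\times\R$ unless the generating function of the circle happens to be $1/q$-periodic, and Mather's criterion lets you bypass that point (it is restored a posteriori by uniqueness of the invariant circle at irrational frequency). Two small remarks: the equality $G^{H_Q}_\omega(\mathbf{x})=q^{-2}G^{H_P}_{q\omega}(q\mathbf{x})$ requires the convergence of the bi-infinite action difference, which you flag but should rightly attribute to Mather's construction of the barrier; and you correctly verify the translation symmetry using the $1$-periodicity of $P'$, which also confirms that the free parameter $p$ in the statement is vacuous (``for some $p$'' $\Leftrightarrow$ ``for every $p$'').
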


		Let \(\bar{\omega}\) be an irrational number. Let \(\{p_n / q_n\}_{n \in \mathbb{N}}\) be the sequence of convergents in the continued fraction expansion of \(\bar{\omega}\). Then $q_n$ satisfies the recurrence relation:
		\begin{equation}\label{recc}
			q_{n+1}=a_{n+1}q_n+q_{n-1}.
		\end{equation}By applying Lemma~\ref{Herm}, we consider the generating functions corresponding to (\ref{h2}) and (\ref{f}) as follows:
		\[
		\tilde{h}_m(x, x') = \tfrac{1}{2}(x - x')^2 + \frac{\delta_{q_m}}{q_m^2}\bigl(1 - \cos(2\pi q_m x')\bigr),
		\]
		\[
		\tilde{f}_{m,a}(x, y) = \Bigl(x + y,\; y + \tfrac{\delta_{q_m}}{q_m} \sin\bigl(2\pi q_m(x + y)\bigr)\Bigr).
		\]For $x \in \mathbb{R}$, define the distance to the nearest integer by
		\begin{equation}\label{2.2}
			\|x\| = \inf_{y \in \mathbb{Z}} |x - y|.
		\end{equation} The proof of Theorem~\ref{Mthm2} reduces to establishing the following result.
		
		\begin{Theorem}\label{Mthm21}
			Fix $a > 1$, $b>4$ and $0 < \varepsilon \ll 1$. There exists an irrational number \( \bar{\omega} \) satisfying
			\begin{equation}\label{neve}
				0 < \|q_m \bar{\omega}\| < n_{q_m}^{-\frac{a}{2} - \varepsilon},
			\end{equation}
			where $n_{q_m}$ is determined by (\ref{n_m}),
			such that for all \( m \geq M_0 \) (with \( M_0 \) sufficiently large), the map \( \tilde{f}_{m,a} \) admits an invariant circle  with rotation number \( \bar{\omega} \).
		\end{Theorem}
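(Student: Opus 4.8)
The plan is to reduce, via Lemma~\ref{Herm}, to the Chirikov standard map with a small coefficient and a rapidly degenerating rotation number, and then to construct the invariant circle by a renormalized Lindstedt series --- the direct KAM method. Applying Lemma~\ref{Herm} with $P(x)=\delta_{q_m}(1-\cos 2\pi x)$ and $q=q_m$, the map $\tilde f_{m,a}$ (generated by $\tilde h_m$) admits an invariant circle of rotation number $\bar\omega$ if and only if $f_{q_m,a}$, generated by $h_{q_m}(x,x')=\tfrac12(x-x')^2+\delta_{q_m}(1-\cos 2\pi x')$, admits one of rotation number $\gamma_m:=q_m\bar\omega-p_m$, which by (\ref{neve}) satisfies $0<|\gamma_m|=\|q_m\bar\omega\|<n_{q_m}^{-a/2-\varepsilon}$; by reversibility of $f_{q_m,a}$ we may assume $\gamma_m>0$. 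So it suffices to (a) build a suitable $\bar\omega$ and (b) show $f_{q_m,a}$ has a real-analytic invariant circle of rotation number $\gamma_m$ for all large $m$. For (a), write $\beta:=1/\alpha$ and define $\bar\omega=[a_1,a_2,\dots]$ recursively: the initial partial quotients are arbitrary, and once $p_m/q_m$ is fixed one takes $a_{m+1}$ (hence $q_{m+1}=a_{m+1}q_m+q_{m-1}$) essentially minimal with $\|q_m\bar\omega\|<n_{q_m}^{-a/2-\varepsilon}$; since $\|q_m\bar\omega\|\asymp q_{m+1}^{-1}$ and $n_{q_m}^{a/2+\varepsilon}=\exp\bigl(b(\tfrac12+\tfrac\varepsilon a)q_m^{\beta}+o(1)\bigr)$, this forces $q_{m+1}\asymp\exp\bigl(b(\tfrac12+\tfrac\varepsilon a)q_m^{\beta}\bigr)$. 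One checks that (\ref{neve}) holds for all large $m$, and that $\bar\omega\in\mathcal B$: indeed $\log q_{m+1}\asymp q_m^{\beta}$ with $\beta<1$ while $q_m$ grows faster than any exponential, so $\sum_m\frac{\log q_{m+1}}{q_m}\asymp\sum_m q_m^{\beta-1}<\infty$ (in fact $\bar\omega\notin\mathcal{BR}_\alpha$, since $\frac{\log q_{m+1}}{q_m^{1/\alpha}}\asymp1$, matching Theorem~\ref{Mthm3}). The key arithmetic fact is that the convergent denominators $Q^{(m)}_n$ of $\gamma_m$ satisfy $Q^{(m)}_1=q_{m+1}$ and $Q^{(m)}_n\asymp q_{m+n}/q_m$ for $n\ge1$, so the Brjuno sum of $\gamma_m$ is dominated by its first term: $B(\gamma_m)=\log q_{m+1}+o(1)\le b(\tfrac12+\tfrac\varepsilon a+o(1))q_m^{\beta}$.

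For (b), look for the circle as $\{(\theta+u_m(\theta),\,\cdot\,):\theta\in\T\}$ with internal dynamics $\theta\mapsto\theta+\gamma_m$; the invariance equation for $u_m:\T\to\R$ is
\[
u_m(\theta+\gamma_m)-2u_m(\theta)+u_m(\theta-\gamma_m)=2\pi\delta_{q_m}\sin\bigl(2\pi(\theta+u_m(\theta))\bigr).
\]
Expanding $u_m=\sum_{k\ge1}(2\pi\delta_{q_m})^k u_m^{(k)}$, the Fourier coefficients $u_{m,\nu}^{(k)}$ are represented as sums of values of rooted trees with $k$ nodes, each node carrying a mode $\pm1$, each line $\ell$ a momentum $\nu_\ell\in\Z$ and a propagator $g_\ell=\bigl(-4\sin^2(\pi\nu_\ell\gamma_m)\bigr)^{-1}\asymp\|\nu_\ell\gamma_m\|^{-2}$, with node and line factors of combined size $\asymp C^k$. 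The naive Siegel--Brjuno bound $\prod_\ell|g_\ell|\le \exp\bigl(2kB(\gamma_m)(1+o(1))\bigr)$ is \emph{not} enough: combined with $\delta_{q_m}^k=\exp(-bq_m^{\beta}k)$ it produces a $k$-th term of size $\exp\bigl(\tfrac{2b\varepsilon}{a}q_m^{\beta}k(1+o(1))\bigr)$, which diverges in $k$.

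The heart of the argument is therefore an improved Siegel--Brjuno estimate, adapting the renormalization scheme of \cite[Sec.~4--5]{BG1}. Its ingredients are: (i) an $m$-dependent multiscale decomposition keyed to the convergents $Q^{(m)}_n$ of $\gamma_m$ --- because $Q^{(m)}_1=q_{m+1}$ is astronomically large relative to $q_m$ (and $Q^{(m)}_2/Q^{(m)}_1$ is larger still), only the lowest scale is effectively active in tree values not already killed by the smallness of $\delta_{q_m}^k$, each such line contributing $|g_\ell|\lesssim q_{m+1}^2$; (ii) a refined notion of resonance (self-energy clusters whose entering and exiting lines both have momentum $\pm1$ and whose total internal momentum vanishes) and a Siegel--Brjuno-type counting showing that essentially all lines carrying the smallest divisors $\asymp q_{m+1}^{-1}$ occur inside resonances; and (iii) a scale-by-scale renormalization: resumming resonant insertions along a line of momentum $\nu$ replaces its propagator by a dressed one $\bigl(g_\nu^{-1}-\mathcal M_m(\nu)\bigr)^{-1}$, where the self-energy obeys $\mathcal M_m(-\nu)=\mathcal M_m(\nu)$, $\mathcal M_m(0)=0$ and $\partial_\nu\mathcal M_m(0)=0$ by reversibility/parity, and the Eliasson-type cancellations inside $\mathcal M_m$ make the renormalized series converge. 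The net effect --- and this is the improvement over the generic standard-map bound, available here because $\gamma_m$ has a single enormous partial quotient --- is that the small divisors enter the final bound effectively to the first power rather than squared (as in the Siegel linearization problem), so that $|u_{m,\nu}^{(k)}|\le C^k\exp\bigl(k(B(\gamma_m)+o(q_m^{\beta}))\bigr)$, with the exponent $B(\gamma_m)$ in place of $2B(\gamma_m)$; the hypothesis $b>4$ is precisely what makes the error terms in this counting negligible against the main term.

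Plugging in: since $\delta_{q_m}=\exp(-bq_m^{\beta})$ and $B(\gamma_m)+o(q_m^\beta)\le b(\tfrac12+\tfrac\varepsilon a+o(1))q_m^{\beta}$, and $\tfrac12+\tfrac\varepsilon a<1$ for $0<\varepsilon\ll1$, the $k$-th term of the renormalized Lindstedt series is bounded by $\exp\bigl(-b(\tfrac12-\tfrac\varepsilon a-o(1))q_m^{\beta}k\bigr)$, which is summable in $k$ for all large $m$. Hence the series converges, producing a real-analytic $u_m$ with $\|u_m\|_{C^1}$ small, and therefore a real-analytic invariant circle of $f_{q_m,a}$ (a graph over $\theta\mapsto\theta+u_m(\theta)$, hence genuine) of rotation number $\gamma_m$ on which the dynamics is the rotation $R_{\gamma_m}$. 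Unwinding Lemma~\ref{Herm} gives a real-analytic invariant circle of $\tilde f_{m,a}$ of rotation number $\bar\omega$, $C^\omega$-conjugate to $R_{\bar\omega}$, for all $m\ge M_0$, which is Theorem~\ref{Mthm21}. \emph{The main obstacle is item (iii)} (and its interplay with (i)--(ii)): the coordinated degeneration $\delta_{q_m}\to0$, $\gamma_m\to0$ puts one exactly at the threshold of the direct method, and recovering the $O(\varepsilon)$-fraction of $B(\gamma_m)$ that separates convergence from divergence requires the $m$-adapted scales, the sharpened resonance combinatorics, and a renormalization carried out scale by scale rather than in a single step.
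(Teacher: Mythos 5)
Your plan is recognizably the same programme as the paper's (direct KAM, Lindstedt series, tree expansion, resonance renormalization), and your reduction via Lemma~\ref{Herm} is legitimate --- it brings Theorem~\ref{Mthm21} back to the equivalent statement about $f_{q_m,a}$ with the tiny rotation number $\gamma_m$. However, there are three substantive deviations from the paper's proof, and the central one is a genuine gap.

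\emph{Different choice of $\bar\omega$.}\ You construct $\bar\omega$ by taking each $a_{m+1}$ essentially \emph{minimal} subject to (\ref{neve}), which forces $\ln q_{m+1}\asymp b\bigl(\tfrac12+\tfrac{\varepsilon}{a}\bigr)q_m^\beta$. The paper instead fixes $a_{n+1}=\lfloor\exp(bq_n^\beta)\rfloor$, giving $\ln q_{m+1}\sim bq_m^\beta$, and Lemma~\ref{arh} is proved for that specific choice. Both constructions give $\bar\omega\in\mathcal B\setminus\mathcal{BR}_\alpha$ satisfying (\ref{neve}), but they are not the same, and the quantitative Brjuno-sum input to the final estimate differs.

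\emph{Different formulation of the small-divisor problem.}\ The paper does \emph{not} go back to $f_{q_m,a}$. It stays with $\tilde f_{m,a}$, where the nonlinearity has Fourier modes $\pm q_m$ and the rotation number is $\bar\omega$. This is what makes the $m$-dependent multiscale decomposition (\ref{2.6}) indexed by the shifted convergents $q_{n+m}$, the threshold $\kappa(n)=\min\{k:q_mk\ge q_{m+n}\}$ in (\ref{kann}), and the six-condition resonance definition (Definition~\ref{defcon}) all natural; the resulting Siegel--Brjuno lemma (Lemma~\ref{Lemma 5}) takes the form $M_n(\vartheta)\le \frac{2q_mk}{q_{n+m}}+N_n^R(\vartheta)$, explicitly displaying the $q_m$-mode structure. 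Your formulation, with modes $\pm1$ and rotation $\gamma_m$, flattens this structure into the generic Brjuno function $B(\gamma_m)$, where the whole difficulty is hidden in a single enormous term.

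\emph{The unproven ``first-power'' improvement.}\ This is the real gap. You assert that the small divisors ``enter the final bound effectively to the first power rather than squared,'' i.e.\ $|u_{m,\nu}^{(k)}|\lesssim C^k\exp\bigl(kB(\gamma_m)\bigr)$ in place of the generic $\exp\bigl(2kB(\gamma_m)\bigr)$. But Berretti--Gentile \cite{BG1} (whom you and the paper both cite) prove that $|\ln\rho(\omega)+2B(\omega)|$ is \emph{uniformly} bounded for the standard map, so a factor-of-two improvement on $B$ is not available in general, and the parity/reversibility cancellations you invoke ($\mathcal M_m(-\nu)=\mathcal M_m(\nu)$, $\mathcal M_m(0)=\partial_\nu\mathcal M_m(0)=0$) are precisely the cancellations already used to obtain the $2B$ bound --- they do not give a further halving. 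Whatever improvement is possible must come from the specific arithmetic of $\gamma_m$ (a single huge partial quotient followed by tame ones), and establishing it is exactly the content of the paper's refined resonance definition, Lemma~\ref{Lemma 7}, Proposition~\ref{Proposition 3.1}, and the resummation of Appendix~\ref{rerA}. Asserting (i)--(iii) of your outline without carrying out the counting --- in particular without showing \emph{which} resonant clusters are admissible, how they are bounded via $\kappa(n)$ and conditions (v)--(vi), and how the resummed propagator is controlled --- leaves the heart of the argument missing. Relatedly, your claim that ``$b>4$ is precisely what makes the error terms negligible'' does not match the paper's accounting: there the $4$ is the main-term constant ($2$ from the Davie-type counting times $2$ from the squared propagator), not an error bound, and $b>4$ is what beats it after the $m$-adapted scales pin the dominant contribution to $q_mk\sum_{n}\frac{\ln q_{n+m+1}}{q_{n+m}}\lesssim kq_m^\beta$.

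In short, the architecture of your argument is the right one, but the key analytic claim on which everything rests --- the improved Siegel--Brjuno estimate --- is stated rather than proved, and its stated form (``first power of $B(\gamma_m)$'') is not what the paper establishes and is not obtainable from the generic standard-map theory.
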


		To align the corresponding parameters, we define
		\begin{equation}\label{oqqm}
			|\omega_m| := \|q_m \bar{\omega}\|, \quad n_{j_m} := n_{q_m}.
		\end{equation}
		Theorem~\ref{Mthm2} then follows directly from Theorem~\ref{Mthm21}.
		
		We now proceed to specify the explicit construction of $\bar{\omega}$. Let \( \beta: = \tfrac{1}{\alpha} \). Recall that \(\{p_n / q_n\}_{n \in \mathbb{N}}\) denote the sequence of convergents in the continued fraction expansion of \(\bar{\omega}=[a_0;a_1,\ldots,a_n,\ldots]\). Define a sequence \(\{a_n\}_{n\in\N}\) by
		\begin{equation}\label{barome}
			a_{n+1} = \left\lfloor \exp\!\bigl(b q_n^{\beta} \bigr) \right\rfloor,
		\end{equation}
		where
		\[q_{n+1}=a_{n+1}q_n+q_{n-1},\quad q_0=p_0=0,\quad p_1=q_0=1,\quad q_1=a_1.\]
		This construction ensures that the tail sum
		\[
		\sum_{n=0}^{\infty} \frac{\ln q_{n+m+1}}{q_{n+m}}
		\]
		is dominated by its first term. For notational clarity, we adopt the following conventions throughout:
		\begin{itemize}
			\item $u \lesssim v$ (resp. $u \gtrsim v$) denotes $u \leq Cv$ (resp. $u \geq Cv$)
			\item $u \sim v$ means $\frac{1}{C}v \leq u \leq Cv$
		\end{itemize}
		for some positive constant $C$.
		
		\begin{Lemma}\label{arh}
			Let  $\bar{\omega}$ be determined by (\ref{barome}). Then \(\bar{\omega} \in \mathcal{B} \setminus \mathcal{BR}_\alpha\) satisfies (\ref{neve}) and
			\begin{equation}\label{quo}
				\sum_{n=0}^{\infty} \frac{\ln q_{n+m+1}}{q_{n+m}} \lesssim  q_m^{\beta - 1}.
			\end{equation}
		\end{Lemma}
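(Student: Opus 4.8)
The plan is to reduce the whole lemma to one growth asymptotic forced by the choice (\ref{barome}) (which defines the $a_n$ inductively, all $\ge 1$, so that $\bar\omega$ is a well-defined irrational): namely $\ln q_{n+1}\sim q_n^{\beta}$ for all large $n$. \textbf{Step 1 (key estimate).} From $q_{n+1}=a_{n+1}q_n+q_{n-1}$ and $a_{n+1}=\lfloor\exp(bq_n^{\beta})\rfloor$ one obtains, whenever $q_n\ge 2$,
\[
bq_n^{\beta}-\ln 2+\ln q_n\;\le\;\ln q_{n+1}\;\le\;bq_n^{\beta}+\ln 2+\ln q_n,
\]
the upper bound via $q_{n+1}\le(a_{n+1}+1)q_n\le 2\exp(bq_n^{\beta})q_n$ and the lower bound via $q_{n+1}\ge a_{n+1}q_n\ge\tfrac12\exp(bq_n^{\beta})q_n$. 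Since $b>4$ gives $\lfloor e^{b}\rfloor\ge 2$, the recurrence forces $q_{n+1}\ge 2q_n$ after finitely many steps, so $q_n\to\infty$; as $\beta=1/\alpha\in(0,1)$, this makes $\ln q_n=o(q_n^{\beta})$, so for all large $n$
\[
\tfrac b2 q_n^{\beta}\;\le\;\ln q_{n+1}\;\le\;2bq_n^{\beta},\qquad q_{n+1}\ge\exp\!\bigl(\tfrac b2 q_n^{\beta}\bigr),
\]
and in particular $q_k\ge R_m^{k-m}q_m$ for $k\ge m$, where $R_m:=\exp(\tfrac b2 q_m^{\beta})\to\infty$.

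\textbf{Step 2 ($\bar\omega\in\mathcal{B}\setminus\mathcal{BR}_\alpha$).} By Step 1, $\frac{\ln q_{n+1}}{q_n}\sim q_n^{\beta-1}$ for large $n$, with $\beta-1<0$; the super-geometric growth of $q_n$ makes $\sum_n q_n^{\beta-1}<\infty$, hence $\sum_n\frac{\ln q_{n+1}}{q_n}<\infty$, i.e. $\bar\omega\in\mathcal{B}$. For the $\alpha$-Brjuno--R\"{u}ssmann series, $\frac{\ln q_{n+1}}{q_n^{1/\alpha}}=\frac{\ln q_{n+1}}{q_n^{\beta}}\ge\tfrac b2$ for all large $n$, so its general term does not go to $0$ and the series diverges: $\bar\omega\notin\mathcal{BR}_\alpha$.

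\textbf{Step 3 ((\ref{neve}) and (\ref{quo})).} Using the classical estimate $\|q_m\bar\omega\|<q_{m+1}^{-1}$ with the lower bound on $\ln q_{m+1}$, $\|q_m\bar\omega\|<q_{m+1}^{-1}\le 2q_m^{-1}\exp(-bq_m^{\beta})\le 2\exp(-bq_m^{\beta})$; on the other hand $n_{q_m}=\lfloor\exp(\tfrac ba q_m^{\beta})\rfloor\le\exp(\tfrac ba q_m^{\beta})$ gives $n_{q_m}^{-\frac a2-\varepsilon}\ge\exp\!\bigl(-(\tfrac b2+\tfrac{b\varepsilon}{a})q_m^{\beta}\bigr)$. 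Since $\varepsilon\ll 1$ forces $\tfrac12-\tfrac{\varepsilon}{a}>0$ and $q_m^{\beta}\to\infty$, we obtain $2\exp(-bq_m^{\beta})<\exp\!\bigl(-(\tfrac b2+\tfrac{b\varepsilon}{a})q_m^{\beta}\bigr)$ for large $m$, which together with $\|q_m\bar\omega\|>0$ is precisely (\ref{neve}). For (\ref{quo}), setting $k=n+m$ and using $\frac{\ln q_{k+1}}{q_k}\le 2bq_k^{\beta-1}$ together with $q_k\ge R_m^{k-m}q_m$,
\[
\sum_{n\ge0}\frac{\ln q_{n+m+1}}{q_{n+m}}\;\le\;2b\sum_{k\ge m}q_k^{\beta-1}\;\le\;2b\,q_m^{\beta-1}\sum_{j\ge0}R_m^{-j(1-\beta)}\;\le\;4b\,q_m^{\beta-1}
\]
for large $m$; the finitely many remaining values of $m$ are absorbed into the constant since the full Brjuno sum is finite. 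This gives (\ref{quo}).

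\textbf{Main obstacle.} There is no deep difficulty here; the content is classical. The only delicate points are the bookkeeping in Step 1 --- verifying that the floor operation, the term $q_{n-1}$ in the recurrence, and the correction $\ln q_n$ are all negligible against $q_n^{\beta}$, which first requires $q_n\to\infty$ (where $b>4$ is used) --- and the observation, which is exactly the design goal of (\ref{barome}), that the tails of the Brjuno sum and of the series in (\ref{quo}) are dominated by their first term, a consequence of the super-geometric lower bound $q_k\ge R_m^{k-m}q_m$.
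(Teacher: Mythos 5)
Your proof is correct and follows essentially the same route as the paper: extract the two-sided estimate $\ln q_{n+1} = b q_n^{\beta} + O(\ln q_n)$ from the recurrence and the choice of $a_{n+1}$, then use the classical bound $\|q_m\bar\omega\| < 1/q_{m+1}$ for (\ref{neve}), the super-geometric growth of $q_n$ for the convergence in (\ref{quo}) and for $\bar\omega\in\mathcal{B}$, and the fact that $\ln q_{n+1}/q_n^{\beta}\geq b/2$ eventually for $\bar\omega\notin\mathcal{BR}_\alpha$. Your treatment of the tail in (\ref{quo}) (the explicit $q_k\geq R_m^{k-m}q_m$ bound and the resulting geometric series with ratio $R_m^{-(1-\beta)}$) is in fact more detailed and self-contained than the paper's brief appeal to ``decays at least exponentially in $n$''; otherwise the two arguments coincide.
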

		
		\begin{proof}
			From the recurrence relation \eqref{recc},
\[
q_m\left\lfloor \exp\!\bigl(b q_m^{\beta} \bigr) \right\rfloor < q_{m+1} \leq q_m\left(\left\lfloor \exp\!\bigl(b q_m^{\beta} \bigr) \right\rfloor + 1\right),
\]
which yields
\[
\|q_m \bar{\omega}\| < \frac{1}{q_{m+1}} < \left\lfloor \exp\!\bigl(b q_m^{\beta} \bigr) \right\rfloor^{-1} < \exp\!\left(-\Bigl(\frac{a}{2} + \varepsilon\Bigr)\frac{b}{a} q_m^{\beta}\right) \leq n_m^{-\frac{a}{2} - \varepsilon}.
\]

Applying \eqref{recc} again gives
\[
\frac{\ln q_{n+1}}{q_n} \leq \frac{\ln q_n}{q_n} + \frac{\ln 2}{q_n} + \frac{\ln a_{n+1}}{q_n}.
\]
By construction, the sequence \(\frac{\ln q_{n+m+1}}{q_{n+m}}\) decays at least exponentially in \(n\). Indeed, even for the golden ratio---the irrational with slowest denominator growth---we have \(q_n \geq (\sqrt{2})^n\). Consequently, the sum is dominated by its first term:
\[
\sum_{n=0}^{\infty} \frac{\ln q_{n+m+1}}{q_{n+m}} \lesssim \frac{\ln q_{m+1}}{q_m} \lesssim q_m^{\beta - 1}.
\]
In particular, this implies \(\bar{\omega} \in \mathcal{B}\).

To prove \(\bar{\omega} \notin \mathcal{BR}_\alpha\), note that
\[
\sum_{n=0}^{\infty} \frac{\ln q_{n+1}}{q_n^{\beta}} \geq \sum_{n=0}^{\infty} \frac{\ln a_{n+1}}{q_n^{\beta}} \geq \sum_{k=0}^{\infty} \frac{\ln \left\lfloor \exp\!\bigl(b q_{n_k}^{\beta} \bigr) \right\rfloor}{q_{n_k}^{\beta}} \geq \frac{1}{2} \sum_{k=0}^{\infty} b \to +\infty.
\]
This completes the proof.
		\end{proof}
		
		\begin{Remark}
			An alternative, weaker definition of \(\mathcal{BR}_\alpha\) requires
			\[
			\sum_{n=0}^{\infty} \ln q_{n+1} \left( \frac{1}{q_n^{\beta}} - \frac{1}{q_{n+1}^{\beta}} \right) < \infty.
			\]
			Fortunately, \(\bar{\omega}\) still satisfies \(\bar{\omega} \notin \mathcal{BR}_\alpha\) under this definition, since
			\begin{equation*}
				\begin{split}
					\sum_{n=0}^{\infty} \ln q_{n+1} \left( \frac{1}{q_n^{\beta}} - \frac{1}{q_{n+1}^{\beta}} \right)
					&\geq \sum_{n=0}^{\infty} \ln a_{n+1} \left( \frac{1}{q_n^{\beta}} - \frac{1}{q_{n+1}^{\beta}} \right) \\
					&\geq \sum_{k=0}^{\infty} \left( \frac{\ln \left\lfloor \exp\!\bigl(b q_{n_k}^{\beta} \bigr) \right\rfloor}{q_{n_k}^{\beta}} - \frac{\ln \left\lfloor \exp\!\bigl(b q_{n_k}^{\beta} \bigr) \right\rfloor}{q_{n_k+1}^{\beta}} \right) \\
					&\geq \frac{1}{2} \sum_{k=0}^{\infty} b \left( 1 - \left( \frac{q_{n_k}}{q_{n_k+1}} \right)^{\beta} \right) \to +\infty.
				\end{split}
			\end{equation*}
			Indeed,
			\[
			\sum_{k=0}^{\infty} \left( \frac{q_{n_k}}{q_{n_k+1}} \right)^{\beta} \leq \sum_{k=0}^{\infty} \left( \frac{1}{a_{n_k+1}} \right)^{\beta} \leq 2 \sum_{k=0}^{\infty} e^{-b \beta q_{n_k}^{\beta}} < +\infty.
			\]
		\end{Remark}
		

		\vspace{1em}
		
		\section{Persistence of $\Gamma_{\bar{\omega}}$ for $f_{m,a}$}\label{5}

		We define the parameter $\varepsilon_m := \delta_{q_m} / q_m$. For notational simplicity, we write $\omega$ in place of $\bar{\omega}$ and $\varepsilon$ in place of $\varepsilon_m$. In these terms, the map $\tilde{f}_{m,a}(x, y) = (x', y')$ takes the form:
		\begin{equation}\label{1.1}
			\begin{cases}
				x' = x + y, \\
				y' = y + \varepsilon \sin\bigl(2\pi q_m (x + y)\bigr).
			\end{cases}
		\end{equation}
		
		The invariant circle $\Gamma_{\varepsilon, \omega}$ of rotation number $\omega$ for the map $\tilde{f}_{\varepsilon,a}$ can be studied via a coordinate transformation on $\mathbb{T} \times \mathbb{R}$:
		\begin{equation}\label{1.2}
			\begin{cases}
				x = \theta + u(\theta, \varepsilon, \omega), \\
				y = \omega + v(\theta, \varepsilon, \omega).
			\end{cases}
		\end{equation}
		Under this change of variables, the dynamics in $(\theta, \omega)$ is given by the rigid rotation:
		\[
		\begin{cases}
			\theta' = \theta + \omega, \\
			\omega' = \omega.
		\end{cases}
		\]
		
		The transformation \eqref{1.2} conjugates the dynamics on the invariant curve to a rotation; the function $u$ is referred to as the conjugating function. From \eqref{1.1}, we obtain the relation
		\[
		y' = x' - x + \varepsilon \sin(2\pi q_m x'),
		\]
		which implies the following identity for the function $v$:
		\[
		v(\theta, \varepsilon, \omega) = u(\theta, \varepsilon, \omega) - u(\theta - \omega, \varepsilon, \omega) + \varepsilon \sin\bigl(2\pi q_m (\theta + u(\theta, \varepsilon, \omega))\bigr).
		\]
		
		A direct computation shows that $u$ satisfies the functional equation:
		\begin{equation}\label{1.4}
			D^2_\omega u(\theta, \varepsilon, \omega) = \varepsilon \sin(2\pi q_m(\theta + u(\theta, \varepsilon, \omega))),
		\end{equation}
		where the operator $D^2_\omega$ is defined on functions of $\theta$ by
		\[
		D^2_\omega \phi(\theta) = \phi(\theta + \omega) - \phi(\theta) + \phi(\theta - \omega).
		\]
		
		By imposing that $u$ has zero average over $\theta$, the formal solutions of \eqref{1.4} are unique and odd in $\theta$. Each smooth solution corresponds to an invariant curve $\Gamma_{\varepsilon,\omega}$, and the smoothness of $u$ determines that of $\Gamma_{\varepsilon,\omega}$. For simplicity, we shall often suppress the dependence on $\omega$ and write $u(\theta, \varepsilon)$.
		
		The conjugating function $u$ admits a formal power series expansion---known as the Lindstedt series---of the form:
		\begin{equation}\label{1.5}
			u(\theta, \varepsilon) = \sum_{\nu \in \mathbb{Z}} u_\nu(\varepsilon) e^{i2\pi\nu \theta} = \sum_{k \geq 1} u^{(k)}(\theta) \varepsilon^k = \sum_{k \geq 1} \sum_{\nu \in \mathbb{Z}} u^{(k)}_\nu e^{i2\pi \nu \theta} \varepsilon^k.
		\end{equation}
		
		The convergence of the series is obstructed by the small divisors problem. To analyze this, let $\omega \in [0,1) \setminus \mathbb{Q}$ be an irrational rotation number, and let $\{p_n / q_n\}$ denote the sequence of convergents arising from its continued fraction expansion.
		
		Substituting the formal series \eqref{1.5} into \eqref{1.4} and equating coefficients yields the following recurrence relation for the Fourier--Taylor coefficients $u^{(k)}_\nu$:
		\begin{equation}\label{1.6}
			u^{(k)}_\nu = \frac{1}{\gamma(\nu)} \sum_{l \geq 0} \frac{1}{l!} \sum_{\substack{\nu_0 + \ldots + \nu_l = \nu \\ k_1 + \ldots + k_l = k-1}}  \left(-\frac{i\nu_0}{2 q_m}\right) (i\nu_0)^l \prod_{j=1}^l u^{(k_j)}_{\nu_j},
		\end{equation}
		where $\nu_0 = \pm q_m$ and
		\begin{equation}\label{1.7}
			\gamma(\nu) = 2 (\cos(2\pi\omega \nu) - 1).
		\end{equation}
		For $\nu \neq 0$, we have $u^{(k)}_0 = 0$ for all $k \geq 1$. The case $m = 0$ in \eqref{1.6} is interpreted as $u^{(k)}_\nu = \left(-i\frac{\nu_0}{ q_m}\right) / \gamma(\nu)$, which requires $k = 1$ and $\nu = \nu_0$.
		
		We have the estimate
		\begin{equation}\label{2.4}
			|\gamma(\nu)| = 2|\cos(2\pi\omega \nu) - 1| \gtrsim \|\omega\nu\|^2.
		\end{equation}
		The small divisors problem arises because $\gamma(\nu)$ may become arbitrarily small when $\omega$ is irrational, and vanishes when $\omega$ is rational.
		
		The radius of convergence of the Lindstedt series is defined as
		\[
		\rho(\omega) = \inf_{\theta \in \mathbb{T}} \left( \limsup_{k \to \infty} \left| u^{(k)}(\theta) \right|^{1/k} \right)^{-1}.
		\]

		\subsection{The tree expansion}\label{tree}
		
		As in \cite{BG1}, the coefficients \( u^{(k)}_\nu \) in \eqref{1.4} admit a graphical representation in terms of \textit{trees}. Here we briefly recall the essential definitions and notations, referring to \cite{BG1} and related references for complete details of the tree expansion formalism adapted to our context.
		
		A tree \(\vartheta\) is defined as a collection of lines connecting a partially ordered set of points (nodes), with the partial order \(\preccurlyeq\) defined as follows: for two nodes \(u\), \(v\), we write \(v \preccurlyeq u\) if \(u\) lies on the path from \(v\) to the root \(r\) of the tree (including the case \(v = u\)); we write \(v \prec u\) if \(v \preccurlyeq u\) and \(v \neq u\). Thus, our trees are rooted trees.
		
		Each line \(\ell\) connects two nodes \(u\) and \(u'\), with an arrow pointing from the higher node to the lower node according to the order \(\preccurlyeq\). If \(u \prec u'\), we say that \(\ell\) exits \(u\) and enters \(u'\), and \(u'\) is the immediate successor of \(u\). We denote by \(u'_0 = r\) the root, although \(r\) is not formally considered a node. Every node \(u\) has exactly one exiting line and \(m_u \geq 0\) entering lines. There is a bijection between lines and nodes: each node \(u\) is associated with the line \(\ell_u\) exiting from it. The line \(\ell_{u_0}\) exiting the last node \(u_0\) is called the root line. Each line \(\ell_u\) may be viewed as the root line of the subtree consisting of all nodes \(v\) with \(v \preccurlyeq u\). The order \(k\) of the tree is the total number of nodes.
		
		To each node \(u \in \vartheta\) we assign a mode label \(\nu_u = \pm q_m\). The momentum flowing through the line \(\ell_u\) is defined as:
		\begin{equation}\label{2.1}
			\nu_{\ell_u} = \sum_{w \preccurlyeq u} \nu_w, \quad \nu_w = \pm q_m.
		\end{equation}
		Note that \(\nu_{\ell_u} \neq 0\) for all lines \(\ell_u\), since \(u^{(k)}_0 = 0\) in \eqref{1.4}.
		
		Following \cite{BG1}, we perform a multi-scale decomposition of the momenta associated to each line. Let \(\chi(x)\) be a \(C^\infty\), non-increasing function on \(\mathbb{R}^+\) with compact support, satisfying:
		\begin{equation}\label{2.5}
			\chi(x) =
			\begin{cases}
				1, & x \leq 1, \\
				0, & x \geq 2.
			\end{cases}
		\end{equation}
		For each \(n \in \mathbb{N}\), define:
		\begin{equation}\label{2.6}
			\begin{cases}
				\chi_0(x) = 1 - \chi(96q_{m+1}x), \\
				\chi_n(x) = \chi(96q_{n+m}x) - \chi(96q_{n+m+1}x), & n \geq 1.
			\end{cases}
		\end{equation}
		Then for any line \(\ell\), we decompose the propagator as:
		\begin{equation}\label{2.7}
			g(\nu_\ell) \equiv \frac{1}{\gamma(\nu_\ell)} = \sum_{n=0}^{\infty} \frac{\chi_n(\|\omega\nu_\ell\|)}{\gamma(\nu_\ell)} \equiv \sum_{n=0}^{\infty} g_n(\nu_\ell),
		\end{equation}
		where \(g_n(\nu_\ell)\) is called the propagator on scale \(n\).
		
		Given a tree \(\vartheta\), we assign to each line \(\ell\) a scale label \(n_\ell\) via the decomposition \eqref{2.7}, selecting the term with \(n = n_\ell\). We say that the line \(\ell\) is on scale \(n_\ell\). If a line \(\ell\) has momentum \(\nu_\ell\) and scale \(n_\ell\), then:
		\begin{equation}\label{2.8}
			\frac{1}{96q_{n_\ell+m+1}} \leq \|\omega\nu_\ell\| \leq \frac{1}{48q_{n_\ell+m}},
		\end{equation}
		provided that \(\chi_{n_\ell}(\|\omega\nu_\ell\|) \neq 0\).
		
		\begin{Remark}
			The choice of the numerical constants (such as 96, 48, and later 768, 8, 24 etc.) is essentially based on certain multiples of the constant 4 appearing in Lemma~\ref{Lemma 1}. We refer to \cite[Section 6]{BG2} for a detailed explanation.
		\end{Remark}
		
		An equivalence relation is introduced on trees via a group \(G\) generated by permutations of the subtrees attached to each node with at least one entering line. Specifically, \(G\) is a Cartesian product of symmetric groups acting on the branches. Two trees that are equivalent under the action of \(G\) are considered identical.
		
		Let \(\mathcal{T}_{\nu,k}\) denote the set of distinct trees of order \(k\), with nonvanishing value and total momentum \(\nu_{\ell_{u_0}} = \nu\), where \(u_0\) is the last node. Denote $\sharp\mathcal{T}_{\nu,k}$ the cardinality of \(\mathcal{T}_{\nu,k}\).  Accounting for the choices of mode labels per node (2 possibilities) and scale labels per line (2 possibilities), along with the combinatorial bound \(2^{2k}\) for the number of semitopological trees of order \(k\), we have
		\begin{equation}\label{car}
			\sharp\mathcal{T}_{\nu,k}\leq 2^{4k}.
		\end{equation}

		Then, as derived in \cite{BG1}, the coefficients admit the tree expansion:
		\begin{equation}\label{2.9}
			u^{(k)}_\nu = \frac{1}{2^k} \sum_{\vartheta \in \mathcal{T}_{\nu,k}} \mathrm{Val}(\vartheta), \quad \mathrm{Val}(\vartheta) = -i \left[ \prod_{u \in \vartheta} \frac{\nu_u^{m_u+1}}{m_u! \, q_m} \right] \left[ \prod_{\ell \in \vartheta} g_{n_\ell}(\nu_\ell) \right].
		\end{equation}
		Here, the factors \(g_{n_\ell}(\nu_\ell)\) are  {\it the propagators}  on scale \(n_\ell\), and \(\mathrm{Val}(\vartheta)\) is called  {\it the value of the tree} \(\vartheta\).

		\subsection{The cluster and $m$-resonance}

		\begin{Definition}[Cluster]
			Given a tree \(\vartheta\), a \textit{cluster} \(T\) of scale \(n\) is a maximal connected set of lines on scale \(\leq n\) containing at least one line on scale exactly \(n\). The lines belonging to \(T\) are called \textit{internal lines} of \(T\), denoted \(\ell \in T\). A node \(u\) is \textit{internal} to \(T\) (written \(u \in T\)) if at least one of its entering lines or its exiting line is in \(T\). Each cluster has \(l_T \geq 0\) entering lines and exactly one or zero exiting lines; these are called \textit{external lines} of \(T\) and all have scale \(> n\). We denote by \(n_T\) the scale of the cluster \(T\), by \(n_T^i\) the minimum scale among the lines entering \(T\), by \(n_T^o\) the scale of the line exiting \(T\) (if it exists), and by \(k_T\) the number of nodes in \(T\).
		\end{Definition}
		
		Following \cite{BG1}, we introduce the notion of resonance relative to a fixed \(m \in \mathbb{N}\). For \(n \in \mathbb{N}\), define
		\begin{equation}\label{kann}
			\kappa(n) := \min\{k \in \mathbb{N} \mid q_m k \geq q_{m+n}\}.
		\end{equation}
		
		\begin{Definition}[$m$-Resonance]\label{defcon}
			A cluster \(V\) in a tree \(\vartheta\) is called a \textit{resonance} with \textit{resonance-scale} \(n_V^R := \min\{n_V^i, n_V^o\}\) if the following conditions hold:
			\begin{enumerate}[label=(\roman*)]
				\item The sum of the mode labels of its nodes is zero:
				\begin{equation}\label{2.10}
					\nu_V \equiv \sum_{u \in V} \nu_u = 0.
				\end{equation}
				\item All entering lines of \(V\) are on the same scale, except possibly one which may be on a higher scale.
				\item If \(l_V \geq 2\), then \(n_V^i \leq n_V^o\); if \(l_V = 1\), then \(|n_V^i - n_V^o| \leq 1\).
				\item \(k_V < \kappa(n)\).
				\item If \(q_{n+m+1} \leq 4q_{n+m}q_m\), then \(l_V = 1\).
				\item If \(q_{n+m+1} > 4q_{n+m}q_m\) and \(l_V \geq 2\), then letting \(k_0\) be the total order of subtrees of order \(< q_{n+m+1}/(4q_m)\) entering \(V\), either:
				\begin{enumerate}[label=(\alph*)]
					\item there is exactly one entering subtree of order \(k_1 \geq q_{n+m+1}/(4q_m)\) and \(k_0 < q_{n+m+1}/(8q_m)\), or
					\item there is no such subtree and \(k_0+k_V < q_{n+m+1}/(4q_m)\).
				\end{enumerate}
			\end{enumerate}
		\end{Definition}
		
		Note that for any resonance \(V\), one has \(n_V^R \geq n_V + 1\), where \(n_V\) is the scale of \(V\) as a cluster.
		
		In the subsequent analysis, we will need to consider trees where a line $\ell$ is assigned a scale $n_\ell$ even when its momentum violates \eqref{2.8}. Although the value of such a tree $\vartheta$ vanishes (due to $\chi_{n_\ell}(\|\omega \nu_\ell\|) = 0$), it is advantageous to decompose $\mathrm{Val}(\vartheta)$ into two potentially non-zero terms: one that cancels analogous contributions from other trees, and another that requires explicit bounds. Consequently, we will work with trees containing lines $\ell$ with momentum $\nu_\ell$ and scale $n_\ell$ that do not satisfy \eqref{2.8}. Nevertheless, such lines can be shown to obey the weaker bound
\begin{equation}\label{2.11}
\frac{1}{768q_{n_\ell+m+1}} \leq \|\omega \nu_\ell\| \leq \frac{1}{8q_{n_\ell+m}}.
\end{equation}
Furthermore, for fixed $\nu_\ell$, the number of admissible scales is bounded by an absolute constant, as established in the following lemma.

\begin{Lemma}\label{Lemma 4}
Let $\nu$ be an integer satisfying
\begin{equation}\label{2.14}
\frac{1}{768q_{n+m+1}} \leq \|\omega \nu\| \leq \frac{1}{8q_{n+m}}.
\end{equation}
Then $\chi_{n'}(\|\omega \nu\|) \neq 0$ only for scales $n'$ in the range $n - 8 \leq n' \leq n + 8$.
\end{Lemma}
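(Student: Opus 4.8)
The plan is to reduce the statement to an elementary estimate on continued-fraction denominators. First I would translate $\chi_{n'}(\|\omega\nu\|)\neq 0$ into a two‑sided bound on $\|\omega\nu\|$ of the same shape as \eqref{2.8}; then, cross‑multiplying this bound with the window \eqref{2.14}, I would deduce that $q_{n'+m}$ is comparable to $q_{n+m+1}$ and that $q_{n+m}$ is comparable to $q_{n'+m+1}$, with absolute constants; finally, since the denominators grow at least geometrically ($q_{j+2}\ge 2q_j$), comparability up to a fixed constant forces $|n'-n|$ to be bounded, and a short count pins the bound down to $8$.

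For the first step, fix $n'\ge 1$ and put $x=\|\omega\nu\|$. By \eqref{2.6}, $\chi_{n'}(x)=\chi(96q_{n'+m}x)-\chi(96q_{n'+m+1}x)$. Using only that $\chi$ is non‑increasing with $\chi\equiv 1$ on $[0,1]$ and $\chi\equiv 0$ on $[2,\infty)$, together with $q_{n'+m}\le q_{n'+m+1}$: if $96q_{n'+m}x\ge 2$ both terms vanish, while if $96q_{n'+m+1}x\le 1$ both terms equal $1$, so in either case $\chi_{n'}(x)=0$. Hence $\chi_{n'}(x)\neq 0$ forces
\[
\frac{1}{96\,q_{n'+m+1}}<x<\frac{1}{48\,q_{n'+m}},\qquad n'\ge 1,
\]
whereas for $n'=0$ the relation $\chi_0(x)=1-\chi(96q_{m+1}x)\neq 0$ forces $x>1/(96q_{m+1})$. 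Combining the upper bound $x<1/(48q_{n'+m})$ with the lower bound $x\ge 1/(768q_{n+m+1})$ from \eqref{2.14} yields $q_{n'+m}<16\,q_{n+m+1}$, and combining the lower bound $x>1/(96q_{n'+m+1})$ with $x\le 1/(8q_{n+m})$ yields $q_{n+m}<12\,q_{n'+m+1}$ (for $n'=0$ the latter reads $q_{n+m}<12\,q_{m+1}$).

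To close, I would use that continued‑fraction denominators are non‑decreasing, so from \eqref{recc} one gets $q_{j+2}=a_{j+2}q_{j+1}+q_j\ge q_{j+1}+q_j\ge 2q_j$ and, by iteration, $q_i\ge 2^{\lfloor(i-j)/2\rfloor}q_j$ for $i\ge j$. If $n'\ge n+9$ then $q_{n'+m}\ge 2^{\lfloor(n'-n-1)/2\rfloor}q_{n+m+1}\ge 2^{4}q_{n+m+1}=16\,q_{n+m+1}$, contradicting $q_{n'+m}<16q_{n+m+1}$, so $n'\le n+8$; if $n'\le n-9$ then $q_{n+m}\ge 2^{\lfloor(n-n'-1)/2\rfloor}q_{n'+m+1}\ge 16\,q_{n'+m+1}$, contradicting $q_{n+m}<12q_{n'+m+1}$, so $n'\ge n-8$. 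The residual case $n'=0$ with $n\ge 9$ is excluded the same way, since then $q_{n+m}\ge 2^{4}q_{m+1}=16q_{m+1}>12q_{m+1}$; hence $n'=0$ can occur only when $n\le 8$, i.e. $0\in[n-8,n+8]$. I expect no conceptual obstacle: the only delicate point is the arithmetic bookkeeping — arranging the numerical constants $8,48,96,768,12,16$ so that the iteration of $q_{j+2}\ge 2q_j$ lands exactly on the window $|n'-n|\le 8$ of the statement rather than on a weaker bound.
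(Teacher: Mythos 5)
Your argument is correct and follows essentially the same route as the paper's proof: from \eqref{2.6} you extract the window $1/(96q_{n'+m+1}) < \|\omega\nu\| < 1/(48q_{n'+m})$ on the support of $\chi_{n'}$ (plus the one-sided bound for $n'=0$), intersect this with \eqref{2.14}, and iterate $q_{j+2}\ge 2q_j$ to force $|n'-n|\le 8$. The paper's terse two-line proof appears to carry index typos in its displayed inequalities, but your more explicit bookkeeping reconstructs the intended argument exactly, including the careful treatment of the boundary scale $n'=0$.
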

		
		\begin{proof}
			The result follows from the properties \(q_{n+1} > q_n\) and \(q_{n+2} > 2q_n\) for all \(n > 0\), which imply
			\[
			\frac{1}{48q_{n+m+1}} < \frac{1}{768q_{n+m}} \quad \text{and} \quad \frac{1}{96q_{n+m-1}} > \frac{1}{8q_{n+m}}.
			\]
			Thus, the support conditions of the cutoff functions \(\chi_{n'}\) restrict \(n'\) to the interval \([n-8, n+8]\).
		\end{proof}
		
		Let \(N_n(\vartheta)\) denote the number of lines in \(\vartheta\) on scale \(n\). Then, for any tree \(\vartheta\), we have the trivial bound:
		\begin{equation}\label{2.12}
			|\mathrm{Val}(\vartheta)| \lesssim  (2q_m)^{k} \prod_{n=0}^{\infty} (768q_{n+m+1})^{2N_n(\vartheta)}.
		\end{equation}
		
		For a tree \(\vartheta\), let \(N_n^R(\vartheta)\) denote the number of resonances with resonance-scale \(n\), and let \(P_n(\vartheta)\) denote the number of resonances on scale \(n\). Note that \(N_0^R = 0\) by definition.
		
		
		\subsection{The arithmetics and counting}
		
		Fix \(m \in \mathbb{N}\). The following elementary lemmata contain all the arithmetic facts we shall need, and are essentially adapted from \cite{Dav}.
		
		\begin{Lemma}\label{Lemma 1}
			Let \(v \in \mathbb{Z}\) satisfy \(\|\omega v\| \leq 1/(4q_{n+m})\). Then:
			\begin{enumerate}[label=(\roman*)]
				\item Either \(v = 0\) or \(|v| \geq q_{n+m}\).
				\item Either \(|v| \geq q_{n+m+1}/4\) or \(v = s q_{n+m}\) for some integer \(s\).
			\end{enumerate}
		\end{Lemma}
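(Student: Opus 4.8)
\textbf{Proof proposal for Lemma~\ref{Lemma 1}.}
The plan is to prove both statements as elementary consequences of the best-approximation property of continued-fraction convergents, applied at the index $n+m$. Write $q:=q_{n+m}$ and $q':=q_{n+m+1}$ for brevity, and recall that for every integer $v$ with $0<|v|<q'$ one has the classical lower bound $\|\omega v\|\geq\|\omega q_{n+m-1}\|>\tfrac{1}{2q}$, while $\|\omega q\|<\tfrac{1}{q'}\leq\tfrac{1}{2q}$. These are exactly the ``law of best approximation'' facts recorded in \cite{Dav}; I would quote them rather than re-derive the continued-fraction theory.

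For part~(i): suppose $v\neq0$ and $|v|<q=q_{n+m}$. Then, since $|v|$ is a nonzero integer strictly below $q_{n+m}$, the best-approximation property at the previous convergent gives $\|\omega v\|\geq\|\omega q_{n+m-1}\|>\tfrac{1}{2q_{n+m}}>\tfrac{1}{4q_{n+m}}$, contradicting the hypothesis $\|\omega v\|\leq\tfrac{1}{4q_{n+m}}$. Hence $|v|\geq q_{n+m}$, which is the claim.

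For part~(ii): assume $v\neq0$ and $|v|<q'/4=q_{n+m+1}/4$; I must show $v$ is an integer multiple of $q_{n+m}$. Write the division $v=sq_{n+m}+r$ with $0\leq r<q_{n+m}$, and suppose toward a contradiction that $r\neq0$. On one hand, $\|\omega v\|\leq\tfrac{1}{4q_{n+m}}$ by hypothesis and $|s|\leq |v|/q_{n+m}<q_{n+m+1}/(4q_{n+m})$, so
\[
\|\omega r\|=\|\omega v-s\,\omega q_{n+m}\|\leq\|\omega v\|+|s|\,\|\omega q_{n+m}\|<\frac{1}{4q_{n+m}}+\frac{q_{n+m+1}}{4q_{n+m}}\cdot\frac{1}{q_{n+m+1}}=\frac{1}{2q_{n+m}}.
\]
On the other hand $r$ is a nonzero integer with $0<r<q_{n+m}$, so by the best-approximation bound used in part~(i), $\|\omega r\|>\tfrac{1}{2q_{n+m}}$. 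This contradiction forces $r=0$, i.e.\ $v=sq_{n+m}$, completing the proof. \qed

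The only genuine point requiring care is the direction of the inequalities in the two best-approximation estimates --- that $\|\omega q_{n+m}\|<1/q_{n+m+1}$ (so that $|s|\|\omega q_{n+m}\|$ is genuinely small) and that no smaller nonzero integer can do better than $q_{n+m-1}$. Both are standard, but one must be slightly attentive to the edge cases $n+m=0$ or $n+m=1$, where $q_{n+m-1}$ may be $0$ or $1$; since the lemma is applied only for $n>0$ (as in the proof of Lemma~\ref{Lemma 4}) and $m\in\mathbb N$, one has $n+m\geq1$ and the convergent denominators involved are all genuine. Beyond that the argument is purely the triangle inequality plus the two quoted facts, so there is no substantial obstacle.
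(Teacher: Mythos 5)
Your proof is correct and takes essentially the same route as the paper: both parts rest on the classical lower bound $\|\omega v\|\geq\|\omega q_{n+m-1}\|>1/(2q_{n+m})$ for $0<|v|<q_{n+m}$ together with $\|\omega q_{n+m}\|<1/q_{n+m+1}$, and part~(ii) uses the same division $v=sq_{n+m}+r$ with a triangle-inequality contradiction (the paper bounds $\|\omega v\|$ from below, you bound $\|\omega r\|$ from above — an equivalent rearrangement). One micro-caveat: the step $|s|\leq|v|/q_{n+m}$ is only valid for $v\geq0$ when $r>0$; the paper finesses this by first treating $v>0$ and then invoking evenness of $\|\cdot\|$, which you should add as a one-line remark.
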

		
		\begin{proof}
			Let \(\{q_n\}\) be the denominators of the convergents of \(\omega\). Then (see, e.g., \cite[Chapter 5]{H11}):
			\begin{equation}\label{2.15}
				\frac{1}{2q_{n+1}} < \|\omega q_n\| < \frac{1}{q_{n+1}},
			\end{equation}
			and for any nonzero integer \(v\) with \(|v| < q_{n+1}\) and \(v \neq q_n\), one has:
			\begin{equation}\label{2.16}
				\|\omega v\| > \|\omega q_n\|.
			\end{equation}
			
			To prove (i), assume \(v \neq 0\). If \(|v| < q_{n+m}\), then by \eqref{2.15} and \eqref{2.16},
			\[
			\|\omega v\| \geq \|\omega q_{n+m-1}\| > \frac{1}{2q_{n+m}}.
			\]
			But this contradicts the hypothesis \(\|\omega v\| \leq 1/(4q_{n+m})\). Hence \(|v| \geq q_{n+m}\).
			
			To prove (ii), again assume \(v \neq 0\) and suppose \(|v| < q_{n+m+1}/4\). If \(v\) is not an integer multiple of \(q_{n+m}\), write \(v = l q_{n+m} + r\) with \(0 < r < q_{n+m}\) and \(l < q_{n+m+1}/(4q_{n+m})\). Then by \eqref{2.15},
			\[
			\|\omega l q_{n+m}\| \leq l \|\omega q_{n+m}\| < \frac{l}{q_{n+m+1}} < \frac{1}{4q_{n+m}},
			\]
			and by \eqref{2.16},
			\[
			\|\omega r\| \geq \|\omega q_{n+m-1}\| > \frac{1}{2q_{n+m}}.
			\]
			By the triangle inequality,
			\[
			\|\omega v\| \geq \|\omega r\| - \|\omega l q_{n+m}\| > \frac{1}{4q_{n+m}},
			\]
			contradicting the hypothesis. The case of negative \(v\) is similar since \(\|\cdot\|\) is even.
		\end{proof}
		
		Recall that \(\kappa(n)\) is defined in \eqref{kann}.
		
		\begin{Lemma}\label{Lemma 2}
			Let \(n \in \mathbb{N}\). If a tree \(\vartheta\) has order \(k < \kappa(n)\), then \(N_n(\vartheta) = 0\) and \(P_{n-1}(\vartheta) = 0\).
		\end{Lemma}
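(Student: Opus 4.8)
The plan is to obtain both assertions from one contradiction: if $\vartheta$ contained a line of scale $\ge n$, the arithmetic lower bound of Lemma~\ref{Lemma 1}(i) would force the momentum of that line to exceed what an order-$k$ tree can produce.

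First I would show that $\vartheta$ has no line of scale $\ge n$, which in particular gives $N_n(\vartheta)=0$. Let $\ell$ be a line on scale $n'\ge n$. By \eqref{2.11}, $\|\omega\nu_\ell\|\le \tfrac{1}{8q_{n'+m}}\le\tfrac{1}{4q_{n'+m}}$, so Lemma~\ref{Lemma 1}(i) gives $\nu_\ell=0$ or $|\nu_\ell|\ge q_{n'+m}$. On the other hand, by \eqref{2.1} the momentum $\nu_\ell=\sum_{w\preccurlyeq u}\nu_w$ is a sum of at most $k$ mode labels, each of modulus $q_m$, so $|\nu_\ell|\le kq_m$. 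Since $k<\kappa(n)$, the definition \eqref{kann} of $\kappa$ gives $kq_m<q_{m+n}$, and as $q_{m+n'}\ge q_{m+n}$ we obtain $|\nu_\ell|\le kq_m<q_{m+n'}$. Combined with $\nu_\ell\ne 0$ (recorded just after \eqref{2.1}), this contradicts the dichotomy above. Hence no line of $\vartheta$ lies on a scale $\ge n$; in particular $N_n(\vartheta)=0$.

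Next I would deduce $P_{n-1}(\vartheta)=0$. Suppose $V$ is a resonance with cluster-scale $n_V=n-1$. As noted in the excerpt, every resonance satisfies $n_V^R\ge n_V+1$, so here $n_V^R\ge n$; since $n_V^R=\min\{n_V^i,n_V^o\}$ is attained by an external line $\ell$ of $V$, that line has scale $n_\ell=n_V^R\ge n$, contradicting the first step. Therefore $P_{n-1}(\vartheta)=0$.

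The argument is essentially bookkeeping on top of Lemma~\ref{Lemma 1}(i); the only point that needs care is matching the crude bound $|\nu_\ell|\le kq_m$ against the hypothesis $k<\kappa(n)$ through the monotonicity $q_{m+n'}\ge q_{m+n}$, so I do not expect a genuine obstacle.
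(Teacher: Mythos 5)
Your proof is correct and follows essentially the same route as the paper's: both observe that $|\nu_\ell|\le kq_m<q_{n+m}\le q_{n'+m}$ forces $\|\omega\nu_\ell\|$ to be too large for any scale $\ge n$, and both dispose of $P_{n-1}$ by noting that a resonance on cluster-scale $n-1$ would need an external line of scale $\ge n$. The only cosmetic difference is that you route the small-divisor estimate through Lemma~\ref{Lemma 1}(i), whereas the paper applies \eqref{2.15}–\eqref{2.16} directly; these are the same arithmetic fact.
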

		
		\begin{proof}
			If \(k < \kappa(n)\), then for any line \(\ell \in \vartheta\), one has \(|\nu_\ell| \leq q_m k < q_{n+m}\). By \eqref{2.15} and \eqref{2.16},
			\[
			\|\omega \nu_\ell\| \geq \|\omega q_{n+m-1}\| > \frac{1}{2q_{n+m}},
			\]
			so \(n_\ell < n\), hence \(N_n(\vartheta) = 0\). The absence of lines on scale \(\geq n\) implies that no cluster (and hence no resonance) on scale \(n-1\) can exist apart from the entire tree itself.
		\end{proof}

		The following ``counting lemma'' is also known as the Siegel--Brjuno estimate.
		
		\begin{Lemma}\label{Lemma 5}
			Let \(\vartheta\) be a tree of order \(k\), and define \(M_n(\vartheta) = N_n(\vartheta) + P_n(\vartheta)\). Then
			\begin{equation}\label{2.19}
				M_n(\vartheta) \leq \frac{2q_m k}{q_{n+m}} + N_n^R(\vartheta),
			\end{equation}
			where \(N_n^R(\vartheta)\) is the number of resonances with resonance-scale \(n\).
		\end{Lemma}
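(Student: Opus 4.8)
The plan is to prove \eqref{2.19} by induction on the order $k$ of $\vartheta$, adapting the Siegel--Brjuno counting argument of \cite{BG1} to the $m$-dependent scale decomposition \eqref{2.6} and to the notion of $m$-resonance of Definition~\ref{defcon}. The only arithmetic input is Lemma~\ref{Lemma 1}, invoked at the shifted indices $n+m$: if two lines $\ell,\ell'$ of $\vartheta$ lie on scales $\le n$ and satisfy $\|\omega\nu_\ell\|,\|\omega\nu_{\ell'}\|\le 1/(8q_{n+m})$, then $\|\omega(\nu_\ell-\nu_{\ell'})\|\le 1/(4q_{n+m})$, so by Lemma~\ref{Lemma 1}(i) either $\nu_\ell=\nu_{\ell'}$ --- in which case the nodes strictly between $\ell$ and $\ell'$ have modes summing to $0$, i.e.\ satisfy the first requirement \eqref{2.10} of a resonance --- or $|\nu_\ell-\nu_{\ell'}|\ge q_{n+m}$, which, since each node carries a mode $\pm q_m$, forces the path joining $\ell$ and $\ell'$ to contain at least $\kappa(n)$ nodes. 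This dichotomy --- ``far apart in the tree, or bounding a resonance'' --- is the engine of the whole count.

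\textbf{Base case.} If $\vartheta$ has no line on scale $\ge n$ --- in particular whenever $k<\kappa(n)$, by Lemma~\ref{Lemma 2} --- then $N_n(\vartheta)=P_n(\vartheta)=0$ while the right-hand side of \eqref{2.19} is non-negative, and the bound is immediate. Henceforth I assume $k\ge\kappa(n)$ and that some line of $\vartheta$ has scale $\ge n$.

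\textbf{Inductive step.} I would let $\ell_1,\dots,\ell_r$ be the lines of $\vartheta$ on scale $\ge n$ closest to the root (maximal with respect to $\preccurlyeq$ among all such lines), and let $\vartheta_i$ be the subtree with root line $\ell_i$, of order $k_i$, so $\sum_i k_i\le k$. If the root line $\ell_{u_0}$ is \emph{not} among the $\ell_i$ (equivalently $n_{\ell_{u_0}}<n$), then every $k_i<k$; every line on scale $n$ lies inside some $\vartheta_i$, and so does every cluster of scale $n$ except possibly one straddling the cut (which can occur only if several $\ell_i$ lie on scale exactly $n$), so applying the inductive hypothesis to each $\vartheta_i$, summing, and reconciling the straddling cluster with the $P_n$ and $N_n^R$ terms --- exactly as in \cite[\S\S4--5]{BG1} --- yields \eqref{2.19} for $\vartheta$. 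If instead $\ell_{u_0}$ is on scale $\ge n$ (so $r=1$, $\vartheta_1=\vartheta$), I would split into two subcases. Either $\ell_{u_0}$ is the exiting line of an $m$-resonance $V$: then $V$ is excised, the inductive hypothesis is applied to the tree obtained by collapsing $V$ (its external momenta are unchanged since $\nu_V=0$) and to the subtrees entering $V$ --- condition (iv) of Definition~\ref{defcon} forces $k_V<\kappa(n)$, so the order drops by a controlled amount, while (v)--(vi) control the orders of the entering subtrees --- and $V$ is paid for by the $N_n^R$ term. Or it is not: then by the dichotomy above, each line on scale $\ge n$ immediately below $\ell_{u_0}$ is either joined to $\ell_{u_0}$ by a path of at least $\kappa(n)$ nodes --- consuming a proportional share of the budget $2q_mk/q_{n+m}$ --- or bounds a resonance with $\ell_{u_0}$, and the induction closes on the resulting subtrees. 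Collecting the estimates and using $\kappa(n)\sim q_{n+m}/q_m$ gives \eqref{2.19}, the factor $2$ in $2q_mk/q_{n+m}$ (rather than $1$) providing the slack needed to absorb the $O(1)$ boundary terms produced at each step of the recursion.

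\textbf{The main obstacle.} The arithmetic is routine --- essentially \cite{Dav} with every index shifted by $m$. The genuine difficulty, and the point where this estimate improves on \cite[Lemma~5]{BG1}, is the combinatorial bookkeeping: one must track nested and branch-straddling $m$-resonances and charge each to the correct $N_n^R$ or $P_n$ contribution so that the recursion closes \emph{with a constant uniform in $m$}, and one must handle the delicate interplay between the renormalization conditions (iv)--(vi) of Definition~\ref{defcon} and the multi-scale recursion. I carry out these details in Appendix~A.
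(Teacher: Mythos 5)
Your overall strategy matches the paper's: induction on the order $k$, the arithmetic input from Lemma~\ref{Lemma 1} applied at the shifted index $n+m$, the dichotomy ``far apart in the tree or bounding a zero-sum cluster'', and a case split on the scale of the root line and whether it exits a resonance. You also correctly identify that the only arithmetic needed is elementary and that the factor $2$ in the bound is not merely decorative.

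There is, however, a genuine gap that prevents the recursion from closing as written. You invoke the inductive hypothesis in the unstrengthened form \eqref{2.19}. But with this form alone, the case in which the root line has scale $n$ and the cluster $T$ of scale $<n$ just below it is entered by $l\ge 2$ lines on scale $\ge n$ does not close: one gets
\[
M_n(\vartheta)=1+\sum_{j=1}^l M_n(\vartheta_j)\le 1+\frac{2q_m(k-k_T)}{q_{n+m}}+\sum_j N_n^R(\vartheta_j),
\]
and there is no a priori lower bound on $k_T$ (the cluster can consist of a single node with several entering branches), so the leading $+1$ cannot be absorbed by the factor $2$. The paper handles this by proving the strictly stronger statement
\[
M_n(\vartheta)\le \frac{2q_m k}{q_{n+m}}-1+N_n^R(\vartheta)\qquad\text{for }k\ge\tfrac12\kappa(n),
\]
by induction: the extra $-1$ per subtree is exactly what absorbs the $+1$ at the root once $l\ge 2$, and for $l=1$ it is the trichotomy from Lemma~\ref{Lemma 1} applied to $\nu_T$ ($|\nu_T|\ge q_{n+m}$, or $\nu_T=0$ with $k_T\ge\kappa(n)$, or $\nu_T=0$ with $k_T<\kappa(n)$ making $T$ an $m$-resonance charged to $N_n^R$) that produces the requisite loss. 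Your remark that ``the factor $2$ provides slack to absorb $O(1)$ boundary terms'' gestures at the right phenomenon, but without formulating the strengthened hypothesis the argument is not valid.

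Two smaller points. First, you defer the ``combinatorial bookkeeping'' to Appendix~A; in the paper, Appendix~A contains the renormalization of resonances (cancellation of localized resonance factors, estimates \eqref{4.21}--\eqref{4.22}), which is used in the proof of Theorem~\ref{Mthm21}, not in the proof of Lemma~\ref{Lemma 5} --- the latter is self-contained in Section~5. Second, only condition (iv) of Definition~\ref{defcon} ($k_V<\kappa(n)$) is actually used in this lemma; conditions (v)--(vi) enter later, in Lemma~\ref{Lemma 7} and Proposition~\ref{Proposition 3.1}, to control momentum deformation under line reattachments, so their ``delicate interplay'' with the multi-scale recursion is not part of the present count.
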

		
		\begin{proof}
			For \(k < \kappa(n)\), Lemma~\ref{Lemma 2} gives \(M_n(\vartheta) = 0\), so \eqref{2.19} is trivially satisfied. We now prove by induction on \(k\) that
\begin{equation}\label{2.191}
M_n(\vartheta) \leq \frac{2q_m k}{q_{n+m}} - 1 + N_n^R(\vartheta)
\end{equation}
for \(k \geq \frac{1}{2}\kappa(n)\). The base case \(\frac{1}{2}\kappa(n) \leq k < \kappa(n)\) follows from earlier bounds. Assuming \(k \geq \kappa(n)\) and that \eqref{2.191} holds for all trees of order less than \(k\), we analyze the structure of \(\vartheta\) in several cases.

\textbf{Case A:} The root line \(\ell\) has scale different from \(n\) and is not the entering line of a resonance on scale \(n\). Let \(\ell_1, \ldots, \ell_l\) be the lines entering the last node \(u_0\), with corresponding subtrees \(\vartheta_1, \ldots, \vartheta_l\). Then
\[
M_n(\vartheta) = \sum_{j=1}^l M_n(\vartheta_j), \quad N_n^R(\vartheta) = \sum_{j=1}^l N_n^R(\vartheta_j).
\]
By the induction hypothesis,
\[
M_n(\vartheta) \leq \sum_{j=1}^l \left( \frac{2q_m k_j}{q_{n+m}} + N_n^R(\vartheta_j) \right) = \frac{2q_m k}{q_{n+m}} + N_n^R(\vartheta),
\]
since \(\sum k_j = k\).

\textbf{Case B:} The root line \(\ell\) has scale \(n\). Let \(\ell_1, \ldots, \ell_l\) be the lines on scale at least \(n\) that are nearest to \(\ell\), with corresponding subtrees \(\vartheta_1, \ldots, \vartheta_l\). These enter a cluster \(T\) (possibly a single node) whose exiting line is \(\ell\). Then
\[
M_n(\vartheta) = 1 + \sum_{j=1}^l M_n(\vartheta_j).
\]
By the induction hypothesis,
\[
M_n(\vartheta) \leq 1 + \sum_{j=1}^l \left( \frac{2q_m k_j}{q_{n+m}} - 1 + N_n^R(\vartheta_j) \right).
\]

\begin{itemize}
\item If \(l \geq 2\), then
\[
M_n(\vartheta) \leq 1 + \frac{2q_m (k - k_T)}{q_{n+m}} - l + \sum_{j=1}^l N_n^R(\vartheta_j) \leq \frac{2q_m k}{q_{n+m}} - 1 + N_n^R(\vartheta),
\]
since \(k_T \geq 1\).

\item If \(l = 0\), then \(M_n(\vartheta) = 1\) and \(N_n^R(\vartheta) = 0\). As \(k \geq \kappa(n)\),
\[
1 \leq \frac{2q_m k}{q_{n+m}} - 1 \leq \frac{2q_m k}{q_{n+m}} - 1 + N_n^R(\vartheta).
\]

\item If \(l = 1\), then
\[
M_n(\vartheta) \leq \frac{2q_m k_1}{q_{n+m}} + N_n^R(\vartheta_1).
\]
Let \(\nu\) and \(\nu_1\) be the momenta of \(\ell\) and \(\ell_1\), respectively, so that \(\nu_T = \nu - \nu_1\) satisfies \(\|\omega \nu_T\| \leq 1/(2q_{n+m})\). By Lemma~\ref{Lemma 1}, either \(|\nu_T| \geq q_{n+m}\) or \(\nu_T = 0\).

\begin{itemize}
\item If \(|\nu_T| \geq q_{n+m}\), then \(q_m k_T \geq q_{n+m}\), and
\[
M_n(\vartheta) \leq \frac{2q_m k}{q_{n+m}} - \frac{2q_m k_T}{q_{n+m}} + N_n^R(\vartheta_1) + 1 \leq \frac{2q_m k}{q_{n+m}} - 1 + N_n^R(\vartheta).
\]

\item If \(\nu_T = 0\) and \(k_T \geq \kappa(n)\), the same argument applies.

\item If \(\nu_T = 0\) and \(k_T < \kappa(n)\), then \(T\) is a resonance with resonance-scale \(n\), so \(N_n^R(\vartheta) = 1 + N_n^R(\vartheta_1)\). Hence
\[
M_n(\vartheta) \leq \frac{2q_m k_1}{q_{n+m}} + N_n^R(\vartheta_1) = \frac{2q_m k}{q_{n+m}} - \frac{2q_m k_T}{q_{n+m}} + N_n^R(\vartheta) \leq \frac{2q_m k}{q_{n+m}} - 1 + N_n^R(\vartheta).
\]
\end{itemize}
\end{itemize}

\textbf{Case C:} The root line \(\ell\) has scale greater than \(n\) and is the exiting line of a resonance \(V_n\) on scale \(n\). Let \(\ell_1, \ldots, \ell_l\) be the lines on scale at least \(n\) nearest to \(\ell\), with corresponding subtrees \(\vartheta_1, \ldots, \vartheta_l\). Then
\[
M_n(\vartheta) = 1 + \sum_{j=1}^l M_n(\vartheta_j).
\]
The cluster \(T\) formed by these lines lies inside \(V_n\). If \(T\) is not a resonance, then \(N_n^R(\vartheta) = \sum N_n^R(\vartheta_j)\), and the argument follows as in Case B. If \(T\) is a resonance, then \(N_n^R(\vartheta) = 1 + \sum N_n^R(\vartheta_j)\), and a similar computation yields the bound.

In every case, \eqref{2.19} is verified, completing the induction.
		\end{proof}

		\subsection{Sketch  proof of Theorem \ref{Mthm21}}

	    	By Lemma~\ref{Lemma 5}, the bound \eqref{2.12} for tree values can be rewritten as
\begin{align}\label{2.20}
|\mathrm{Val}(\vartheta)| &\lesssim (2q_m)^k \prod_{n=0}^{\infty} (768q_{n+m+1})^{2(M_n(\vartheta)-P_n(\vartheta))} \\
&\lesssim (2q_m)^k \prod_{n=0}^{\infty} (768q_{n+m+1})^{2\left(\frac{2q_mk}{q_{n+m}} + N_n^R(\vartheta)-P_n(\vartheta)\right)}.
\end{align}

We now introduce a resummation procedure. For each resonance \(V\), this procedure yields a factor \((768q_{n_V+m+1})^2\) from one of the external lines on scale \(n_V^R\). To formalize this step, we apply a sequence of transformations to the trees, generating additional trees and extending \(\mathcal{T}_{\nu,k}\) to an enlarged set \(\mathcal{T}_{\nu,k}^*\).

As a result of resummation, the factor
\[
\prod_{n=0}^{\infty} (768q_{n+m+1})^{2N_n^R(\vartheta)}
\]
can be replaced by
\[
e^{6k} \prod_{n=0}^{\infty} (768q_{n+m+1})^{2P_n(\vartheta)}.
\]

The cardinality of \(\mathcal{T}_{\nu,k}^*\) is controlled by \(C^k \cdot \sharp\mathcal{T}_{\nu,k}\) In fact, 	 Lemma~\ref{Lemma 4} restricts each line to at most 17 possible scale assignments. Consequently, the total number of distinct trees in \(\mathcal{T}_{\nu,k}^*\) satisfies:
		\[
		\sharp\mathcal{T}_{\nu,k}^*\leq 2^{4k} \cdot 17^k = 272^k.
		\]
More details on the resummation procedure will be provided in  Appendix  \ref{rerA}. Thus, for sufficiently large \(m\),
\begin{equation}\label{2.21}
\begin{split}
|u^{(k)}(\theta)| &\leq \left| \frac{1}{2^k} \sum_{|\nu| \leq q_mk} \sum_{\vartheta \in \mathcal{T}_{\nu,k}} \mathrm{Val}(\vartheta) \right| \leq \left| \frac{1}{2^k} \sum_{|\nu| \leq q_mk} \sum_{\vartheta \in \mathcal{T}_{\nu,k}^*} \mathrm{Val}(\vartheta) \right| \\
&\lesssim (2q_m)^{2k} \prod_{n=0}^{\infty} (768q_{n+m+1})^{4q_mk / q_{n+m}} \\
&\lesssim (2q_m)^{2k} \exp\left( 4q_mk \sum_{n=0}^{\infty} \frac{\ln q_{n+m+1}}{q_{n+m}} \right).
\end{split}
\end{equation}

From \eqref{quo}, we know that
\[
\sum_{n=0}^{\infty} \frac{\ln q_{n+m+1}}{q_{n+m}} \lesssim q_m^{\frac{1}{\alpha} - 1}.
\]
Hence,
\[
|u^{(k)}(\theta)| \lesssim (2q_m)^{2k} \exp\left(4k q_m^{\frac{1}{\alpha}}\right).
\]
Choosing \(b > 4\) in \eqref{bval}, we conclude that for sufficiently large \(m\), the parameter \(\varepsilon\) lies within the radius of convergence of the Lindstedt series, i.e., \(\varepsilon < \rho(\omega)\).
		\vspace{2em}
		
		\appendix
		
		\section{Renormalization of resonances}\label{rerA}
		
		The operation in this appendix will be carried out by adapting the argument in \cite[Section 4 and 5]{BG1} to the present context. The main distinction lies in Lemma \ref{Lemma 7} and Proposition \ref{Proposition 3.1}; for the sake of completeness and the reader's convenience, we retain the remaining content of \cite[Section 4 and 5]{BG1}.

	\subsection{The generations of resonances}
	Given a tree \(\vartheta\), we first introduce the notion of \emph{maximal resonances}, i.e. those resonances that are not contained in any larger one. We refer to them as \emph{first-generation resonances}. Within each first-generation resonance, we then consider the next maximal resonances, namely those not contained in any larger resonance other than the first-generation ones; these are called \emph{second-generation resonances}. Proceeding inductively, we define the \(j^{\text{th}}\)-generation resonances (\(j \geq 2\)) as resonances that are maximal within resonances of generation \(j-1\).
	
	Let \({\bf{V}}\) denote the set of all resonances of a tree \(\vartheta\), and let \({\bf{V}}_j\) denote the set of all resonances of generation \(j\), with \(j=1,\ldots,G\), where \(G\) is an integer depending on \(\vartheta\).
	
	Consider a resonance \(V \in {\bf{V}}_j\) with \(l_V\) entering lines. Define \(V_0\) as the set of nodes and lines internal to \({\bf{V}}\) but external to any sub-resonance contained in \(V\). Let
	\[
	L_V = \{\ell_1,\ldots,\ell_{l_V}\}
	\]
	be the set of entering lines of \(V\). Among these, define \(L_V^R \subseteq L_V\) as those lines that enter a resonance of higher generation within \(V\), and \(L_V^0 = L_V \setminus L_V^R\) as those that enter nodes in \(V_0\).
	
	For each \(\ell_l \in L_V^R\), let \(V(\ell_l)\) denote the minimal resonance containing the node into which \(\ell_l\) enters (i.e. the resonance of highest generation containing that node). Define \(V_0(\ell_l)\) as the set of nodes and lines internal to \(V(\ell_l)\) but external to resonances contained in \(V(\ell_l)\). We set
	\[
	\tilde{{\bf{V}}}(V) = \{\tilde{V} \subset V : \tilde{V} = V(\ell_l) \text{ for some } \ell_l \in L_V^R\}.
	\]
	Let \(l_{V_0} = |L_V^0|\). The number of lines in \(L_V^R\) entering the same resonance \(\tilde{{{V}}} \in \tilde{{\bf{V}}}(V)\) is always equal to 1, as ensured by the following result (see \cite[Lemma 6]{BG1}).
	
	\begin{Lemma}\label{Lemma 6}
		For \(j \geq 1\), if \(W \in {\bf{V}}_{j+1}\) is contained inside a resonance \(V \in {\bf{V}}_j\), then among the lines entering \(W\), at most one can also be an entering line of \(V\).
	\end{Lemma}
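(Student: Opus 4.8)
The plan is to argue by contradiction, using only the scale bookkeeping encoded in conditions (ii) and (iii) of Definition~\ref{defcon} together with the fact that a subcluster of a connected cluster is attached to the rest through its entering lines and its unique exiting line. Suppose $W\in{\bf{V}}_{j+1}$ is contained in $V\in{\bf{V}}_j$; since resonances of distinct generations are distinct, the inclusion $W\subsetneq V$ is strict. Assume, for contradiction, that two distinct lines $\ell_1\neq\ell_2$ entering $W$ are also entering lines of $V$. Being entering lines of the cluster $V$, each of $\ell_1,\ell_2$ carries scale strictly greater than $n_V$, the scale of $V$ as a cluster. On the other hand, condition (ii) applied to $W$ allows at most one entering line of $W$ to have scale exceeding $n_W^i:=\min\{n_\ell:\ell\text{ enters }W\}$. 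Hence it suffices to prove
\[
n_W^i\leq n_V,
\]
for then $\ell_1$ and $\ell_2$ would be two entering lines of $W$ of scale $>n_V\geq n_W^i$, contradicting (ii).

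To establish $n_W^i\leq n_V$ I would split into two cases. Every line entering $W$ enters a node of $W\subseteq V$, so it either has both endpoints in $V$ — in which case it is an internal line of $V$, hence of scale $\leq n_V$ — or it exits a node outside $V$, i.e.\ it belongs to $L_V$. If the first alternative holds for at least one entering line of $W$, then $n_W^i\leq n_V$ immediately. If not, then \emph{all} entering lines of $W$ belong to $L_V$; I claim the exiting line $\ell_W$ of $W$ is then internal to $V$. Indeed, $W$ possesses an exiting line ($V$ is not the entire tree, since $\nu_V=0$ whereas the total momentum $\nu_{\ell_{u_0}}$ is non-zero, so both $V$ and the smaller cluster $W$ have genuine exiting lines); and since $W\subsetneq V$ with $V$ connected, the non-empty node set $V\setminus W$ must be joined to $W$ by some line of $V$. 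That line is incident to $W$ but distinct from every entering line of $W$ (those being in $L_V$, hence external to $V$), so it can only be $\ell_W$. Therefore $n_W^o\leq n_V$; and since $\ell_1\neq\ell_2$ forces $l_W\geq 2$, condition (iii) for $W$ gives $n_W^i\leq n_W^o\leq n_V$. Either way $n_W^i\leq n_V$, closing the contradiction.

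The delicate point is the second case: one must exclude the configuration in which $W$ reaches the ``top'' of $V$, so that $\ell_W$ might coincide with the exiting line of $V$ and carry a large scale. This is exactly where strictness of the inclusion $W\subsetneq V$ enters: the non-emptiness of $V\setminus W$ produces an internal line of $V$ meeting $W$, and — once all entering lines of $W$ have been absorbed into $L_V$ — that line is forced to be $\ell_W$ itself. The remaining ingredients are the elementary structural facts that the lines of $V$ incident to a subcluster $W$ are exactly the entering lines of $W$ together with its single exiting line, and that internal lines of a cluster of scale $n_V$ have scale at most $n_V$; both are immediate from the definition of cluster. No arithmetic input — in particular, not Lemma~\ref{Lemma 1} — is needed at this stage.
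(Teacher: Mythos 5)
The paper provides no proof of this lemma at all; the text just points to \cite[Lemma 6]{BG1}. Your blind argument is correct and self-contained, and uses exactly the right structural ingredients: the dichotomy on whether an entering line of $W$ is internal or external to $V$, the connectivity of $V$ forcing $\ell_W$ to be internal to $V$ in the second alternative (thereby ruling out $\ell_W$ being $V$'s exiting line), and conditions (ii) and (iii) of Definition~\ref{defcon}. The unified reduction to $n_W^i \leq n_V$ followed by an appeal to (ii) is valid. You could streamline the second case slightly: once all entering lines of $W$ lie in $L_V$ they all carry scale $> n_V$, so $n_W^i > n_V$ directly, while (iii) with $l_W\geq 2$ gives $n_W^i \leq n_W^o \leq n_V$; the contradiction is therefore already complete without re-invoking (ii). Your closing remark that no arithmetic (in particular Lemma~\ref{Lemma 1}) is needed is also accurate --- this is a purely combinatorial statement about cluster structure and scale labels.
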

	
	\medskip
	
	We now define the \emph{resonance family} \(\mathcal{F}_V(\vartheta)\) of \(V \in {\bf{V}}\) in \(\vartheta\). This is the set of trees obtained from \(\vartheta\) by applying the transformations generated by a group \(P_V\), consisting of the following operations:
	
	1. {\bf Line reattachments}. Detach the line \(\ell_1\).
	- If \(\ell_1 \in L_V^R\), reattach it to any node internal to \(V_0(\ell_1)\).
	- If \(\ell_1 \in L_V^0\), reattach it to any node in \(V_0\).
	Repeat this operation for \(\ell_2, \ldots, \ell_{l_V}\).
	
	2. {\bf Permutations of entering lines}. For each node \(u \in V\) with \(m_u\) entering lines, let \(s_u\) denote those internal to \(V\), and \(r_u = m_u - s_u\) the remaining lines entering \(V\). We act on the set of lines entering \(u\) by the group of permutations of all \(m_u\) entering lines, modulo permutations of the \(s_u\) internal and the \(r_u\) external lines. This yields
	\[
	\binom{m_u}{s_u} = \frac{m_u!}{s_u!\, r_u!}
	\]
	distinct trees.
	
	3. {\bf Mode-label inversion}. Simultaneously change the sign of all mode labels of nodes internal to \(V\).
	
	We refer to operations of type (1)--(3) as \emph{renormalization transformations}.
	
	\medskip
	
	Let \(\mathcal{F}_{\mathbf{V}_1}(\vartheta)\) denote the family obtained by composing the resonance families \(\mathcal{F}_{V_1}(\vartheta)\) for all \(V_1 \in \mathbf{V}_1\). For any tree \(\vartheta_1 \in \mathcal{F}_{\mathbf{V}_1}(\vartheta)\), let \(V_2 \in \mathbf{V}_2\) and define \(\mathcal{F}_{V_2}(\vartheta_1)\) as the resonance family of \(V_2\) in \(\vartheta_1\). Iterating this construction for all \(V_2 \in \mathbf{V}_2\) and all \(\vartheta_1 \in \mathcal{F}_{\mathbf{V}_1}(\vartheta)\), we obtain the family \(\mathcal{F}_{\mathbf{V}_2}(\vartheta)\).
	
	Proceeding recursively through the third generation, and so on up to the \(G^{\text{th}}\) generation, we arrive at a family \(\mathcal{F}(\vartheta)\). This consists of all trees obtained by applying the renormalization transformations associated with each resonance \(V \in \mathbf{V}\), defined inductively through the families \(\mathcal{F}_{\mathbf{V}_j}(\vartheta)\).
	
	\begin{Remark}\label{Remark 8}
		Given a tree \(\vartheta \in \mathcal{T}_{\nu,k}\) and its resonance family \(\mathcal{F}(\vartheta)\), if we select another tree \(\vartheta' \in \mathcal{F}(\vartheta)\) with nonvanishing value \(\text{Val}(\vartheta')\), then by construction \(\mathcal{F}(\vartheta') = \mathcal{F}(\vartheta)\). Note, however, that \(\mathcal{F}(\vartheta)\) may also contain trees of vanishing value, e.g. those containing lines \(\ell\) such that \(\chi_{n_\ell}(\|\omega v_\ell\|)=0\).
	\end{Remark}
	We can write
	\begin{equation}\label{3.2}
		\sum_{\vartheta \in \mathcal{T}_{\nu,k}} \text{Val}(\vartheta)
		= \sum_{\vartheta \in \mathcal{T}^*_{\nu,k}} \frac{1}{|\mathcal{F}(\vartheta)|} \sum_{\vartheta' \in \mathcal{F}(\vartheta)} \text{Val}(\vartheta'),
	\end{equation}
	where the factors \(|\mathcal{F}(\vartheta)|\) serve to avoid overcounting (see Remark \ref{Remark 8}). The sum implicitly defines the set \(\mathcal{T}_{\nu,k}^*\), namely the set of inequivalent trees contained in \(\cup_{\vartheta \in \mathcal{T}_{\nu,k}} \mathcal{F}(\vartheta)\). Thus, if \(\vartheta \in \mathcal{T}_{\nu,k}^*\), then \(\vartheta \in \mathcal{F}(\vartheta_0)\) for some \(\vartheta_0 \in \mathcal{T}_{\nu,k}\). However, unlike \(\vartheta_0\), the value of \(\vartheta\) may vanish.
	
	\subsection{Cancelation in three cases}
	In general, a tree may contain multiple resonances, which can be nested within one another.
	
	Consider a tree $\vartheta \in \mathcal{T}^*_{\nu,k}$ as in (\ref{3.2}).
	For each resonance $V$ (of any generation), we define a pair of \emph{derived lines} $\ell_V^1, \ell_V^2$ internal to $V$ (possibly coinciding),
	subject to the following compatibility condition: if $V$ is contained in some other resonance $W$,
	then the set $\{\ell_V^1,\ell_V^2\}$ must include those lines of $\{\ell_W^1,\ell_W^2\}$ that lie inside $V$.
	Depending on whether 0, 1, or 2 such lines are present, we say that $V$ is of type 2, 1, or 0, respectively.

	We label the type of $V$ by $z_V \in \{0,1,2\}$.
	Additionally, we associate to each resonance $V$ a pair of entering lines $\ell_{l}^{V}, \ell_{l'}^{V}$ if $z_V = 2$,
	or a single entering line $\ell_{l}^{V}$ if $z_V = 1$, where $l, l' = 1, \ldots, l_V$.
	
	For each resonance we introduce an interpolation parameter $t_V$ and a measure $\pi_{z_V}(t_V)  dt_V$ defined by:
	\begin{equation}\label{4.1}
		\pi_z(t) =
		\begin{cases}
			1 - t, & z = 2, \\
			1, & z = 1, \\
			\delta(t - 1), & z = 0.
		\end{cases}
	\end{equation}
	Let $\mathbf{t}= \{t_V\}_{V \in \mathbf{V}}$ denote the collection of all interpolation parameters.
	
	In what follows, we shall always consider the quantities $\omega v$, $v \in \mathbb{Z}$, modulo $1$, and by $\omega v$ we mean the representative of the equivalence class in the interval $(-1/2, 1/2]$.
	
	For a node $u \in V$, let $\mathcal{E}_u$ denote the set of lines entering $V$ whose endpoints are nodes preceding $u$.The momentum flowing through a line $\ell_u$ internal to a resonance $V$ is defined recursively as:
	\begin{equation}\label{4.2}
		\nu_{\ell_u}(\mathbf{t}) = \nu_{\ell_u}^0 + t_V \sum_{\ell \in \mathcal{E}_u} \nu_\ell(\mathbf{t}), \quad \nu_{\ell_u}^0 = \sum_{\substack{w \in V \\ w \preccurlyeq u}} \nu_w.
	\end{equation}
	Note that $\nu_{\ell_u}(t)$ depends only on the interpolation parameters of resonances containing $\ell_u$.
	
	For later convenience, we write
	\begin{equation}
		U_V(\vartheta) = \prod_{u \in V} \frac{\nu_u^{m_u+1}}{m_u!q_m}.
	\end{equation}
	
	The resonance factor $\mathcal{V}_V(\vartheta)$ is defined case by case according to $z_V$:
	
	\begin{itemize}
		\item For $z_V = 2$:
		\begin{equation}\label{4.3}
			\mathcal{V}_V(\vartheta) = U_V(\vartheta) \left[ \prod_{\ell \in V} g_{n_\ell}(\nu_\ell(\mathbf{t})) \right].
		\end{equation}
		
		\item For $z_V = 1$ (with $\ell_V^1$ being the derived line shared with a containing resonance $W$):
		\begin{equation}\label{4.4}
			\mathcal{V}_V(\vartheta) = U_V(\vartheta) \left[ \left( \frac{\partial}{\partial \mu} g_{n_\ell} (\nu_{\ell_V}(\mathbf{t})) \right)
			\left( \prod_{\substack{\ell \in V \\ \ell \neq \ell_V^1}} g_{n_\ell}(\nu_\ell(\mathbf{t})) \right) \right].
		\end{equation}
		
		\item For $z_V = 0$ with $\ell_V^1 = \ell_V^2$:
		\begin{equation}\label{4.5}
			\mathcal{V}_V(\vartheta) = U_V(\vartheta) \left[ \left( \frac{\partial^2}{\partial \mu \partial \mu'} g_{n_\ell} (\nu_{\ell_V}(\mathbf{t})) \right)
			\left( \prod_{\substack{\ell \in V \\ \ell \neq \ell_V^1}} g_{n_\ell}(\nu_\ell(\mathbf{t})) \right) \right].
		\end{equation}
		
		\item For $z_V = 0$ with $\ell_V^1 \neq \ell_V^2$:
		\begin{equation}\label{4.6}
			\mathcal{V}_V(\vartheta) = U_V(\vartheta) \left[ \left( \frac{\partial}{\partial \mu} g_{n_\ell} (\nu_{\ell_V}(\mathbf{t})) \right)
			\left( \frac{\partial}{\partial \mu'} g_{n_\ell^2} (\nu_{\ell_V^2}(\mathbf{t})) \right)
			\left( \prod_{\substack{\ell \in V \\ \ell \neq \ell_V^1, \ell_V^2}} g_{n_\ell}(\nu_\ell(\mathbf{t})) \right) \right].
		\end{equation}
	\end{itemize}
	
	In (\ref{4.4})--(\ref{4.6}), the symbols $\mu$ and $\mu'$ denote $\omega \nu_{\ell_l^W}$ and $\omega \nu_{\ell_{l'}^W}$, respectively,
	where $\ell_{l}^W$ and $\ell_{l'}^W$ (possibly coinciding) are entering lines of resonances $W$ and $W'$ (possibly coinciding) that contain $V$.
	
	We can regard the resonance factor  as a function of the quantities
	$\mu_1 = \omega \nu_{\ell_1}, \ldots, \mu_{l_V} = \omega \nu_{\ell_{l_V}},$
	where $\nu_{\ell_1}, \ldots, \nu_{\ell_{l_V}}$ are the momenta carried by the lines $\ell_1, \ldots, \ell_{l_V}$ entering $V$. More precisely, we set
	\begin{equation}\label{3.6}
		\mathcal{V}_V(\vartheta) \equiv \mathcal{V}_V(\vartheta; \omega \nu_{\ell_1}, \ldots, \omega \nu_{\ell_{l_V}}),
	\end{equation}
	and decompose
	\begin{equation}\label{3.7}
		\mathcal{V}_V(\vartheta; \omega \nu_{\ell_1}, \ldots, \omega \nu_{\ell_{l_V}})
		= \mathcal{L}\mathcal{V}_V(\vartheta; \omega \nu_{\ell_1}, \ldots, \omega \nu_{\ell_{l_V}})
		+ \mathcal{R}\mathcal{V}_V(\vartheta; \omega \nu_{\ell_1}, \ldots, \omega \nu_{\ell_{l_V}}).
	\end{equation}
	
	The renormalization operator $\mathcal{R}$ is defined type-wise:
	
	\begin{itemize}
		\item For $z_V = 2$:
		\begin{equation}\label{4.7}
			\begin{split}
				\mathcal{R}\mathcal{V}_V (\vartheta ; \omega \nu_{\ell_1} (\mathbf{t}), \ldots , \omega \nu_{\ell_{l_V}} (\mathbf{t}))
				=& \sum_{l,l'=1}^{l_V} \omega \nu_{\ell_l} (\mathbf{t}) \omega \nu_{\ell_{l'}} (\mathbf{t}) \\
				&\cdot \int_0^1 dt_V (1 - t_V) \frac{\partial^2}{\partial \mu_l \partial \mu_{l'}} \mathcal{V}_V (\vartheta , t_V \omega \nu_{\ell_1} (\mathbf{t}), \ldots , t_V \omega \nu_{\ell_{l_V}} (\mathbf{t})).
			\end{split}
		\end{equation}
		
		\item For $z_V = 1$:
		\begin{equation}\label{4.8}
			\begin{split}
				\mathcal{R}\mathcal{V}_V (\vartheta ; \omega \nu_{\ell_1} (\mathbf{t}), \ldots , \omega \nu_{\ell_{l_V}} (\mathbf{t}))
				=& \sum_{l=1}^{l_V} \omega \nu_{\ell_l} (\mathbf{t}) \\
				&\cdot \int_0^1 dt_V \frac{\partial}{\partial \mu_l} \mathcal{V}_V (\vartheta , t_V \omega \nu_{\ell_1} (\mathbf{t}), \ldots , t_V \omega \nu_{\ell_{l_V}} (\mathbf{t})).
			\end{split}
		\end{equation}
		
		\item For $z_V = 0$:
		\begin{equation}\label{4.9}
			\mathcal{R}\mathcal{V}_V (\vartheta ; \omega \nu_{\ell_1} (\mathbf{t}), \ldots , \omega \nu_{\ell_{l_V}} (\mathbf{t}))
			= \mathcal{V}_V (\vartheta ; \omega \nu_{\ell_1} (\mathbf{t}), \ldots , \omega \nu_{\ell_{l_V}} (\mathbf{t})).
		\end{equation}
	\end{itemize}
	
	In all cases, set $\mathcal{L} = \mathcal{I} - \mathcal{R}$ where $\mathcal{L}$ is the \emph{localization operator}, while $\mathcal{R} = 1 - \mathcal{L}$ is the \emph{renormalization operator}.
	
	Performing the renormalization transformations in $\mathcal{P}_V$, we find that for all trees obtained by the action of $\mathcal{P}_V$, the contribution to the localized factor from $L_V(\vartheta)$ is the same, i.e.,
	\begin{equation}\label{3.11}
		\mathcal{L}\mathcal{V}_V(\vartheta) = \mathcal{L}\mathcal{V}_V(\vartheta'), \quad \forall \vartheta' \in \mathcal{F}_V(\vartheta).
	\end{equation}
	Therefore we may consider
	\begin{equation}\label{3.12}
		\sum_{\vartheta' \in \mathcal{F}_V(\vartheta)} \mathcal{L}\mathcal{V}_V(\vartheta').
	\end{equation}
	
	The sum of localized factors over all trees in the resonance family $\mathcal{F}_V(\vartheta)$ vanishes, so only the renormalized part contributes. More precisely:
	
	\begin{Lemma}\label{Lemma 8}
		Let $\vartheta$ be a tree and $V \subset \vartheta$ a resonance. Then the localized resonance factor satisfies
		\[
		\sum_{\vartheta' \in \mathcal{F}_V(\vartheta)} \mathcal{L}\mathcal{V}_V(\vartheta') = 0.
		\]
	\end{Lemma}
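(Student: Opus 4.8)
The plan is to reproduce the cancellation argument of \cite[Lemma 8]{BG1}, adapted to the mode labels $\nu_u=\pm q_m$ and the propagators $g_n$ of the present setting; since this part of \cite{BG1} is being retained, the proof will amount to a (mildly annotated) transcription. I would organize it according to the type $z_V\in\{0,1,2\}$ of the resonance $V$.

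First, the case $z_V=0$ is immediate: by \eqref{4.9} we have $\mathcal{R}\mathcal{V}_V=\mathcal{V}_V$, hence $\mathcal{L}\mathcal{V}_V\equiv 0$ and every summand already vanishes. For $z_V\in\{1,2\}$ I would use \eqref{4.7}--\eqref{4.8} to identify $\mathcal{L}\mathcal{V}_V$ as the Taylor polynomial of $\mathcal{V}_V(\vartheta;\omega\nu_{\ell_1},\dots,\omega\nu_{\ell_{l_V}})$ in the external momenta at the origin, of order $0$ if $z_V=1$ and of order $1$ if $z_V=2$. The structural point is that in each term of this polynomial at least one entering line $\ell_l$ of $V$ carries momentum set to $0$, so that by \eqref{4.2} the interpolated momenta $\nu_{\ell_u}(\mathbf{t})$ of all internal lines of $V$ become independent of the node of $V_0$ (or of $V_0(\ell_l)$, when $\ell_l\in L_V^R$) to which that line is attached.

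Next, I would sum over the orbit $\mathcal{F}_V(\vartheta)$ of $\mathcal{P}_V$, invoking \eqref{3.11} so that the localized factor is common along the reattachment/permutation orbit. Reattaching such a zero-momentum entering line at a node $w$ replaces the node factor of $w$ by one proportional to $\nu_w$, while the permutation move contributes the multiplicity $\binom{m_w}{s_w}$, which absorbs the $1/m_w!$ produced by the reattachment; summing over all admissible attachment nodes therefore produces exactly the factor $\sum_{w}\nu_w$, equal to $\nu_V=0$ (respectively $\nu_{V(\ell_l)}=0$) by the resonance condition \eqref{2.10}, which kills the sum. I would then check that the mode-label inversion is consistent with this --- the propagators depend on their momenta only through $\|\omega\nu_\ell\|$ and are hence even, while the product of node factors picks up the explicit sign $(-1)^{\sum_{u\in V}(m_u+1)}$, with $\sum_{u\in V}(m_u+1)=2k_V-1+l_V$ since $V$ has $k_V$ nodes, $k_V-1$ internal lines and $l_V$ entering lines --- and that for a type-$2$ resonance all of whose external momenta appear in a given first-order term one exchanges the sum over attachment nodes with the sum over internal lines and collapses the inner sum to $\nu_{\ell}^{0}$ via \eqref{2.10}, so that the same mechanism applies.

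The hard part will be the bookkeeping when $V$ is nested inside a larger resonance $W$. Then one or two derived lines of $V$ are forced to coincide with derived lines of $W$ (this is precisely the role of the compatibility condition on $\{\ell_V^1,\ell_V^2\}$ and of the type label $z_V$), so the interpolations $t_W$ and $t_V$ in \eqref{4.2} interact and one must verify that the cancellation performed at the level of $V$ is consistent with the derivatives already carried at the level of $W$. I would dispatch this by an induction on the generation of resonances, exactly as in \cite[Sec.~4--5]{BG1}, using Lemma~\ref{Lemma 6} --- that at most one entering line of $W$ can enter a given sub-resonance --- to keep the inductive step under control.
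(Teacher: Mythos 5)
Your proposal lays out the Berretti--Gentile cancellation argument that the paper itself invokes as a black box (the paper's entire proof of this lemma is the single sentence ``The proof of Lemma~\ref{Lemma 8} is identical to that of \cite[Lemma]{BG1}''), so the approach is the same. The ingredients you identify --- $\mathcal{L}$ as Taylor truncation at vanishing external momenta, the decoupling of the interpolated internal momenta $\nu_{\ell_u}(\mathbf{t})$ from the reattachment point once those external momenta vanish so that summing over the $\mathcal{P}_V$-orbit reduces to a node-factor sum producing a factor $\nu_V=0$ (or $\nu_{V(\ell_l)}=0$), the parity bookkeeping for mode-label inversion, and the generation-by-generation induction via Lemma~\ref{Lemma 6} for nested resonances --- are precisely those of \cite{BG1}.
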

	
	The proof of Lemma \ref{Lemma 8} is identical to that of \cite[Lemma]{BG1}.
	
	If a resonance $V$ has resonance-scale $n_V^R$, then there exists an entering line $\ell_V^0$ on scale $n_V^R$ such that
	$\| \omega \nu_{\ell} \| \leq \| \omega \nu_{\ell_V^0} \|$ for all $\ell$ entering $V$.
	
	To each derived line $\ell$ we associate the corresponding entering lines $\ell_l(\ell)$ and (if applicable) $\ell_{l'}(\ell)$
	with respect to which the propagator $g_{n_\ell}(\nu_\ell(\mathbf{t}))$ is differentiated.
	Let $V$ be the minimal resonance containing $\ell$.
	If $\ell$ is derived once, let $W$ be the resonance for which $\ell_l(\ell)$ is an entering line;
	if derived twice, let $W$ and $W'$ (with $W' \subseteq W$) be the resonances corresponding to $\ell_l(\ell)$ and $\ell_{l'}(\ell)$, respectively.
	
	Define the following chains of resonances:
	
	\begin{itemize}
		\item For a singly-derived line $\ell$, let $W_0, \ldots, W_p$ be the resonances satisfying
		\begin{equation}\label{4.10}
			V = W_0 \subset W_1 \subset \cdots \subset W_p = W.
		\end{equation}
		The set $\mathbf{W}(\ell) = \{W_0, \ldots, W_p\}$ is called the \emph{simple cloud} of $\ell$.
		
		\item For a doubly-derived line $\ell$, let $W_0, \ldots, W_p$ be such that
		\begin{equation}\label{4.11}
			V = W_0 \subset W_1 \subset \cdots \subset W_{p'} = W' \subset \cdots \subset W_p = W, \quad p' \leq p.
		\end{equation}
		Then $\mathbf{W}_{-}(\ell) = \{W_0, \ldots, W_{p'}\}$ is the \emph{minor cloud}, and $\mathbf{W}_{+}(\ell) = \{W_0, \ldots, W_p\}$ the \emph{major cloud} of $\ell$.
	\end{itemize}
	
	It may happen that in $\vartheta_0$ a line internal to $\vartheta$ carries a scale $n_\ell$
	and momentum $\tilde{\nu}_\ell$ such that $\chi_{n_\ell}(\|\omega \tilde{\nu}_\ell\|)\neq 0$,
	whereas the momentum $\nu_\ell$ of the corresponding line in $\vartheta$ (the conjugate of the line
	in $\vartheta_0$) satisfies $\chi_{n_\ell}(\|\omega \nu_\ell\|)=0$ (see Remark~\ref{Remark 8}).
	In this situation, condition~(\ref{2.8}) fails for that line. Nevertheless, the momentum $\nu_\ell$
	cannot deviate too far from $\tilde{\nu}_\ell$; more precisely, one has
	\begin{equation}\label{3.14}
		\frac{1}{768 q_{n_\ell+m+1}} \leq \|\omega \nu_\ell\| \leq \frac{1}{24 q_{n_\ell+m}},
	\end{equation}
	as will be established below, relying on the following lemma.
	\begin{Lemma}\label{Lemma 7}
		Let $\vartheta_0 \in \mathcal{T}_{v,k}$ be a tree associated with a resonance $V$, and let
		$\vartheta \in \mathcal{T}^*_{v,k}$ be a tree obtained from $\vartheta_0$ through the action
		of $\mathcal{P}_V$, i.e. $\vartheta \in \mathcal{F}_V(\vartheta_0)$. Suppose that for each entering line
		$\ell_l$ of $V$, $l=1,\ldots,l_V$, the inequality
		$\|\omega \nu_{\ell_l}\| \leq 1/(8 q_{n_V^R+m})$ holds. Then, for any line $\ell \in V$,
		with momenta $\nu_\ell$ and $\tilde{\nu}_\ell$ flowing through $\ell$ in $\vartheta$ and $\vartheta_0$, respectively, one has
		\begin{equation}\label{3.15}
			\bigl|\|\omega \nu_\ell\| - \|\omega \tilde{\nu}_\ell\|\bigr| \leq \frac{1}{4q_{n_V^R+m}}, \qquad
			\|\omega \nu_\ell\| \geq \frac{1}{4q_{n_V^R+m}}, \qquad
			\|\omega \tilde{\nu}_\ell\| \geq \frac{1}{4q_{n_V^R+m}}.
		\end{equation}
	\end{Lemma}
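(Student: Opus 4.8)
The plan is to follow the scheme of \cite[Lemma~7]{BG1}, the new ingredient being the scaling factor $q_m$ attached to every mode label. First I would separate, for a line $\ell=\ell_u$ internal to $V$, the part of its momentum produced by the nodes of $V$ from the part carried in by the entering lines of $V$. Setting $t_V=1$ in \eqref{4.2},
\[
\nu_{\ell_u}=\nu_{\ell_u}^{0}+\sum_{\ell\in\mathcal{E}_u}\nu_{\ell},\qquad
\nu_{\ell_u}^{0}=\sum_{\substack{w\in V\\ w\preccurlyeq u}}\nu_w\in q_m\mathbb{Z},
\]
and likewise $\tilde\nu_{\ell_u}=\tilde\nu_{\ell_u}^{0}+\sum_{\ell\in\tilde{\mathcal{E}}_u}\nu_{\ell}$ in $\vartheta_0$. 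The transformations in $\mathcal{P}_V$ change $\nu_{\ell_u}$ only through (i) modifying the set $\mathcal{E}_u$ of entering lines of $V$ whose feet precede $u$ (line reattachments and permutations of entering lines) and (ii) simultaneously reversing the common sign of all the $\nu_{\ell_u}^{0}$ (mode-label inversion). Since $\|\omega\cdot\|$ is even, (ii) is harmless and everything reduces to bounding the ``entering correction'' $\sum_{\mathcal{E}_u}\nu_{\ell}-\sum_{\tilde{\mathcal{E}}_u}\nu_{\ell}$ and the size of $\nu_{\ell_u}^{0}$.

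Next I would estimate the entering correction, which is a signed sum of momenta of entering lines of $V$, each of which satisfies $\|\omega\nu_{\ell_l}\|\le 1/(8q_{n_V^R+m})$ by hypothesis. Condition (ii) of Definition~\ref{defcon} puts all entering lines but at most one on the common lowest scale $n_V^i\ge n_V^R$, and conditions (v)--(vi) bound the total order of the entering subtrees that are not ``large''; combined with the scale--momentum relation \eqref{2.8} for $\vartheta_0\in\mathcal{T}_{v,k}$ and the arithmetic dichotomy of Lemma~\ref{Lemma 1}, these keep the accumulated correction below $1/(4q_{n_V^R+m})$ in the $\|\omega\cdot\|$-metric. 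As (ii) leaves $\|\omega\nu_{\ell_u}^{0}\|$ unchanged, this already gives $\bigl|\|\omega\nu_\ell\|-\|\omega\tilde\nu_\ell\|\bigr|\le 1/(4q_{n_V^R+m})$, the first assertion of \eqref{3.15}.

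For the two lower bounds I would argue according to whether $\nu_{\ell_u}^{0}$ vanishes. By condition (iv) and \eqref{kann} one has $|\nu_{\ell_u}^{0}|\le k_V q_m<q_{n_V^R+m}$, so Lemma~\ref{Lemma 1}(i) yields the dichotomy: either $\nu_{\ell_u}^{0}=0$, or $\|\omega\nu_{\ell_u}^{0}\|>1/(4q_{n_V^R+m})$. In the latter case the triangle inequality together with the correction bound of the previous step gives $\|\omega\nu_\ell\|\ge 1/(4q_{n_V^R+m})$, and symmetrically for $\tilde\nu_\ell$ since $|\tilde\nu_{\ell_u}^{0}|=|\nu_{\ell_u}^{0}|$. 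In the former case $\nu_\ell$ and $\tilde\nu_\ell$ are carried entirely by entering lines of $V$; since in $\vartheta_0$ the line $\ell$ carries a scale $n_\ell\le n_V<n_V^R$, \eqref{2.8} forces $\|\omega\tilde\nu_\ell\|\ge 1/(96q_{n_\ell+m+1})$, and invoking once more conditions (ii), (v), (vi) and Lemma~\ref{Lemma 1} to exclude further cancellation among the entering momenta one obtains the required lower bound for $\tilde\nu_\ell$, which transfers to $\nu_\ell$ by the first assertion. The weaker estimate \eqref{3.14}---for a line in $\vartheta_0$ whose scale label is inconsistent with its momentum---then follows by combining \eqref{3.15} with \eqref{2.8}.

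The delicate point is the second step: one must follow the momentum of \emph{every} line of $V$---including lines lying inside sub-resonances of $V$, which are displaced also by $\mathcal{P}_V$-moves internal to those sub-resonances---through the full composite of renormalization transformations, and verify that the $q_m$-adapted resonance conditions (ii), (v), (vi) are exactly what is needed to keep the accumulated entering correction below the threshold $1/(4q_{n_V^R+m})$. This interplay between \eqref{2.8}, Lemma~\ref{Lemma 1} and the new factor $q_m$ is the only genuine departure from \cite[Lemma~7]{BG1}; the remaining bookkeeping is routine and parallels \cite[Sections~4--5]{BG1}.
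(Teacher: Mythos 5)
Your overall strategy matches the paper's: decompose the momentum of an internal line as $\nu_\ell=\nu^0_\ell+E_\ell$ with $\nu^0_\ell=\sum_{w\in V,\,w\preccurlyeq u}\nu_w$ the internal part (a multiple of $q_m$, invariant up to sign under $\mathcal{P}_V$) and $E_\ell$ a signed sum of entering momenta; bound $E_\ell$ using the $m$-resonance conditions (iv)--(vi), Lemma~\ref{Lemma 1}, and the scale bounds; and close by a triangle inequality. So the route is the same. There is, however, a quantitative gap in the lower bounds that the paper's proof is careful to avoid.

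You deduce the dichotomy \(\nu^0_\ell=0\) or \(\|\omega\nu^0_\ell\|>1/(4q_{n_V^R+m})\) from Lemma~\ref{Lemma 1}(i). This is not strong enough to make the triangle inequality close. Indeed, the entering-correction estimate you establish is $\|\omega E_\ell\|\le \sum_l\|\omega\nu_{\ell_l}\|\le 1/(4q_{n_V^R+m})$, so with only $\|\omega\nu^0_\ell\|>1/(4q_{n_V^R+m})$ you obtain
\[
\|\omega\nu_\ell\|\ge\|\omega\nu^0_\ell\|-\|\omega E_\ell\|>0,
\]
not the claimed $\ge 1/(4q_{n_V^R+m})$. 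The paper does not use Lemma~\ref{Lemma 1} here; instead it applies the best-approximation inequalities \eqref{2.15}--\eqref{2.16} directly to $0<|\nu^0_\ell|\le q_m k_V<q_{n_V^R+m}$ to get the sharper estimate
\[
\|\omega\nu^0_\ell\|\ge\|\omega q_{n_V^R+m-1}\|>\frac{1}{2q_{n_V^R+m}},
\]
and \emph{this} extra factor of $2$ is precisely what makes the subtraction come out $\ge 1/(4q_{n_V^R+m})$. You should replace the appeal to Lemma~\ref{Lemma 1}(i) by this direct argument.

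Relatedly, the branch $\nu^0_\ell=0$ you introduce is not present in the paper and is not actually handled by the vague appeal to ``conditions (ii), (v), (vi) and Lemma~\ref{Lemma 1} to exclude further cancellation.'' If $\nu^0_\ell=0$ then $\tilde\nu_\ell$ is itself a sum of entering momenta, so your step~2 gives $\|\omega\tilde\nu_\ell\|\le 1/(4q_{n_V^R+m})$ --- the opposite inequality to what you need --- and the scale constraint \eqref{2.8} only yields $\|\omega\tilde\nu_\ell\|\ge 1/(96q_{n_\ell+m+1})$, which (since $n_\ell\le n_V<n_V^R$) does not reach $1/(4q_{n_V^R+m})$. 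In the paper's framework the case $\nu^0_\ell=0$ simply does not arise (the argument takes $\nu^0_\ell\neq 0$ as given, consistent with the exclusion of zero-momentum lines in the tree expansion); if you want to retain the case split you must genuinely rule out $\nu^0_\ell=0$ rather than claim a bound it cannot give.
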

	\begin{proof}
		Since $V$ is a resonance, for each line $\ell \in V$ one has
		$|\nu^0_\ell| \leq q_m k_V < q_{n_V^R+m}$ (see Item~(iv) in Definition \ref{defcon}).
		Therefore,
		\begin{equation}\label{3.16}
			\|\omega \nu^0_\ell\| \geq \|\omega q_{n_V^R+m-1}\| > \frac{1}{2q_{n_V^R+m}},
		\end{equation}
		by (\ref{2.15}) and (\ref{2.16}). On the other hand,
		\begin{equation}\label{3.17}
			\|\omega \nu_\ell - \omega \nu^0_\ell\| \leq \sum_{l=1}^{l_V} \|\omega \nu_{\ell_l}\|,
		\end{equation}
		where $\nu_1,\ldots,\nu_{l_V}$ denote the momenta along the lines
		$\ell_1,\ldots,\ell_{l_V}$ entering $V$. By assumption,
		\begin{equation}\label{3.18}
			\|\omega \nu_{\ell_l}\| \leq \frac{1}{8q_{n_V^R+m}}, \qquad \forall l=1,\ldots,l_V.
		\end{equation}
		
		If $l_V \geq 2$, then $q_{n_V^R+m+1} > 4 q_{n_V^R+m} q_m$ (see Item~(v) in Definition \ref{defcon}).
		In this case, if an entering line (say $\ell_1$) is the root line of a subtree of order
		$\geq q_{n_V^R+m+1}/(4q_m)$, then all remaining entering lines correspond to subtrees of
		orders $k_2,\ldots,k_{l_V}$ with $k_0 = k_2 + \cdots + k_{l_V} < q_{n_V^R+m+1}/(8q_m)$ (see Item~6).
		Furthermore, for $l=2,\ldots,l_V$, one has $k_l \geq \kappa(n_V^R)$, otherwise $\ell_l$ would not
		be on scale $\geq n_V^R$. By Lemma~\ref{Lemma 1}, $\nu_{\ell_l} = s_l q_{n_V^R+m}$ with $s_l \in \mathbb{Z}$, and
		\begin{equation}\label{3.19}
			|s_2| + \cdots + |s_{l_V}| \leq \frac{q_m k_0}{q_{n_V^R+m}} \leq \frac{q_{n_V^R+m+1}}{8q_{n_V^R+m}}.
		\end{equation}
		Hence,
		\begin{equation}\label{3.20}
			\sum_{l=1}^{l_V} \|\omega \nu_{\ell_l}\| \leq \frac{1}{8q_{n_V^R+m}}
			+ \sum_{l=2}^{l_V} |s_l| \,\|\omega q_{n_V^R+m}\|
			\leq \frac{1}{8q_{n_V^R+m}} + \frac{1}{8q_{n_V^R+m}}
			= \frac{1}{4q_{n_V^R+m}},
		\end{equation}
		using (\ref{2.15}). Therefore, upon replacing $\vartheta_0$ by $\vartheta$, the bounds in
		(\ref{3.15}) follow.
		
		If no entering line of $V$ is the root line of a tree of order
		$\geq q_{n_V^R+m+1}/(4q_m)$ and the tree rooted at the exiting line of $V$ is of order
		$k < q_{n_V^R+m+1}/(4q_m)$ (see Item~(vi) in Definition \ref{defcon}), then
		\begin{equation}\label{3.21}
			\sum_{l=1}^{l_V} |s_l| q_{n_V^R+m} \leq q_m(k_1 + \cdots + k_{l_V}) = q_m(k-k_V) < q_mk \leq \frac{q_{n_V^R+m+1}}{4},
		\end{equation}
		which implies
		\begin{equation}\label{3.22}
			\sum_{l=1}^{l_V} \|\omega \nu_{\ell_l}\| \leq \sum_{l=1}^{l_V} |s_l| \|\omega q_{n_V^R}\|
			\leq \frac{q_{n_V^R+m+1}}{4q_{n_V^R+m}} \cdot \frac{1}{q_{n_V^R+m+1}}
			= \frac{1}{4q_{n_V^R+m}},
		\end{equation}
		and thus (\ref{3.15}) again follows. The case $l_V=1$ is immediate from
		(\ref{3.17}) and (\ref{3.18}).
	\end{proof}
	
	\bpr\label{Proposition 3.1}
	Let \(\vartheta \in \mathcal{T}^{*}_{\nu,k}\) be a renormalized tree obtained from \(\vartheta_0 \in \mathcal{T}_{\nu,k}\) via the iterative replacement procedure described above. Then the lines of \(\vartheta\) inherit the scales of their conjugate lines in \(\vartheta_0\), and a line $\ell\in\vartheta$ with scale $n_\ell$ satisfies $$\frac{1}{768 q_{n_\ell+m+1}} \leq \|\omega \nu_\ell\| \leq \frac{1}{24 q_{n_\ell+m}}.$$
	\epr
	\begin{proof}
		The first claim follows by construction. The second is proved by induction on the resonance generation.
		
		Firstly, we establish the base case (first-generation resonances). Note that since the entering lines of $V$ satisfy (\ref{2.8}),
		hence also (\ref{2.11}), Lemma~\ref{Lemma 7} applies. Moreover, within $V$ in $\vartheta_0$,
		only lines of scale $n_\ell$ such that
		$1/(48 q_{n_\ell+m}) > 1/(4q_{n_V^R+m})$ are possible, by the second inequality in (\ref{3.15})
		and the definition of scale (see (\ref{2.8})). Consequently, for a line $\ell$ internal to $V$
		with scale $n_\ell$, one has
		\begin{equation}\label{3.23}
			\|\omega \nu_\ell\| \leq \frac{1}{48q_{n_\ell+m}} + \frac{1}{4q_{n_V^R+m}}
			\leq \frac{1}{48q_{n_\ell+m}} + \frac{1}{48q_{n_\ell+m}}
			= \frac{1}{24q_{n_\ell+m}}.
		\end{equation}
		
		Similarly, if $1/(96q_{n_\ell+m+1}) > 2/(q_{n_V^R+m})$, then
		\begin{equation}\label{3.24}
			\|\omega \nu_\ell\| \geq \frac{1}{96q_{n_\ell+m+1}} - \frac{1}{4q_{n_V^R+m}}
			\geq \frac{1}{96q_{n_\ell+m+1}} - \frac{1}{768q_{n_\ell+m+1}}
			= \frac{7}{768q_{n_\ell+m+1}},
		\end{equation}
		while if $1/(96q_{n_\ell+m+1}) < 2/(q_{n_V^R+m})$, then
		\begin{equation}\label{3.25}
			\|\omega \nu_\ell\| \geq \frac{1}{4q_{n_V^R+m}} \geq \frac{1}{768q_{n_\ell+m+1}},
		\end{equation}
		by the third inequality in (\ref{3.15}). Thus (\ref{3.14}) follows, showing in particular that
		the momentum $\nu_\ell$ of any $\ell \in \vartheta$ still satisfies (\ref{2.11}).

		Now assume that (\ref{2.14}) holds for all resonances of generation \(j' < j\).
		Consider a line \(\ell\) contained in a resonance \(V \in \mathbf{V}_j\) but outside any \(\mathbf{V}_{j+1}\)-resonance inside \(V\). Let \(V \equiv W_1 \subset \cdots \subset W_j\) be the chain of resonances containing \(\ell\). Denote by \(\tilde{\nu}_\ell\) the momentum through \(\ell\) in \(\vartheta_0\), and by \(\nu_\ell\) the momentum through its conjugate line in \(\vartheta\). Then:
		\begin{equation}\label{4.16}
			\frac{1}{96q_{n_\ell+m+1}} - \sum_{i=1}^{j} \frac{1}{4q_{n_{W_i}^R+m}}
			\leq \|\omega\tilde{\nu}_\ell\| \leq
			\frac{1}{48q_{n_\ell+m}} + \sum_{i=1}^{j} \frac{1}{4q_{n_{W_i}^R+m}}.
		\end{equation}
		
		Let \(\vartheta_0^V \in F_{\mathbf{V}_j}(\vartheta_0)\) be the tree containing \(V\), and \(\vartheta^V\) the tree in \(F_V(\vartheta_0^V)\) obtained via \(P_V\). Since (\ref{2.11}) holds before renormalizing \(V\), for each entering line \(\ell_l\) (\(l = 1, \ldots, l_V\)) we have \(\|\omega \nu_{\ell_l}\| < 1/(8q_{n_{\ell_l}+m})\). Then, arguing as in Lemma \ref{Lemma 7}, we obtain:
		\begin{equation}\label{4.17}
			\big|\|\omega \nu_\ell\| - \|\omega \tilde{\nu}_\ell\|\big| \leq \frac{1}{4q_{n_V^R+m}}, \quad
			\|\omega \nu_\ell\| \geq \frac{1}{4q_{n_V^R+m}}, \quad
			\|\omega \tilde{\nu}_\ell\| \geq \frac{1}{4q_{n_V^R+m}},
		\end{equation}
		where \(\nu_\ell\) is the momentum in \(\vartheta^V\).
		
		For \(\ell\) to be contained in \(V = W_1\), we must have \(1/(48q_{n_\ell+m}) \geq 1/(4q_{n_V^R+m})\). Set \(j_1 = \lfloor (j - 1)/2 \rfloor\) and \(j_2 = \lfloor j/2 \rfloor\). Using \(q_{n+1} \geq q_n\) and \(q_{n+2} \geq 2q_n\), we have:
		\begin{equation}\label{4.18}
			q_{n_{W_1}^R+m} \leq \frac{q_{n_{W_3}^R+m}}{2} \leq \cdots \leq \frac{q_{n_{W_{j_1}}^R+m}}{2^{j_1}}, \quad
			q_{n_{W_2}^R+m} \leq \frac{q_{n_{W_4}^R+m}}{2} \leq \cdots \leq \frac{q_{n_{W_{j_2}}^R+m}}{2^{j_2}}.
		\end{equation}
		
		Then:
		\begin{equation}\label{4.19}
			\|\omega \nu_\ell\| \leq \frac{1}{48q_{n_\ell+m}} + \frac{1}{4q_{n_V^R+m}} \left( \sum_{i=0}^{j_1} \frac{1}{2^i} + \sum_{i=0}^{j_2} \frac{1}{2^i} \right)
			\leq \frac{1}{48q_{n_\ell+m}} + \frac{1}{q_{n_V^R+m}} \leq \frac{5}{48q_{n_\ell+m}}.
		\end{equation}
		
		Similarly:
		\begin{equation}\label{4.20}
			\|\omega \nu_\ell\| \geq \frac{1}{96q_{n_\ell+m+1}} - \frac{1}{4q_{n_V^R+m}} \left( \sum_{i=0}^{j_1} \frac{1}{2^i} + \sum_{i=0}^{j_2} \frac{1}{2^i} \right)
			\geq \frac{1}{96q_{n_\ell+m+1}} - \frac{1}{q_{n_V^R+m}}.
		\end{equation}
		
		The lower bound is at least \(1/(192q_{n_\ell+m+1})\) if \(1/(96q_{n_\ell+m+1}) > 2/q_{n_V^R+m}\), and at least \(1/(768q_{n_\ell+m+1})\) otherwise.
		
		Thus, (\ref{2.14}) holds for any line \(\ell\) inside \(V \in \mathbf{V}_j\). Since subsequent renormalizations (on resonances of generation \(j' > j\)) do not affect \(\ell\), the momentum \(\nu_\ell\) remains unchanged, completing the induction.
	\end{proof}
	Define the map \(\Lambda: \mathbf{V} \to \Lambda(\mathbf{V}) = \{z_V, \ell^1_V, \ell^2_V, \{\ell^V_l, \ell^V_{l'}\}^{*}\}_{V \in \mathbf{V}}\), which associates to each resonance \(V\) its derived lines \(\ell^1_V, \ell^2_V\) and the set
	\[
	\{\ell^V_l, \ell^V_{l'}\}^{*} =
	\begin{cases}
		\{\ell^V_l, \ell^V_{l'}\}, & z_V = 2, \\
		\ell^V_l, & z_V = 1, \\
		\emptyset, & z_V = 0,
	\end{cases}
	\]
	where \(l, l' = 1, \ldots, l_V\) and \(\ell^1_V, \ldots, \ell^V_{l_V}\) are the lines entering \(V\).
	
	The map \(\Lambda\) induces a decomposition \(L = L_0 \cup L_1 \cup L_2\) of the line set, where \(L_j\) consists of lines derived \(j\) times. By Lemma \ref{Lemma 8}:
	\begin{equation}\label{4.15}
		\begin{split}
			\text{Val}(\vartheta) = &\sum_{\Lambda \mathbf{V}} \left(\prod_{V \in \mathbf{V}} \int_0^1 \pi_{z_V}(t_V)  dt_V\right)\left(\prod_{u \in \vartheta} \frac{\nu_u^{m_u+1}}{m_u!q_m}\right)\\
			&\cdot\left(\prod_{\ell \in L_0} g_{n_\ell}(\nu_\ell(\mathbf{t}))\right)\left(\prod_{\ell \in L_1} \omega \nu_{\ell_l(\ell)} \frac{\partial}{\partial \mu_l} g_{n_\ell}(\nu_\ell(\mathbf{t}))\right)\\
			&\cdot\left(\prod_{\ell \in L_2} \omega \nu_{\ell_l(\ell)} \omega \nu_{\ell_{l'}(\ell)} \frac{\partial^2}{\partial \mu_l \partial \mu_{l'}} g_{n_\ell}(\nu_\ell(\mathbf{t}))\right).
		\end{split}
	\end{equation}
	By Proposition \ref{Proposition 3.1}, after full renormalization, the bound (\ref{2.11}) still holds for the momenta in \(\vartheta\). Thus, in (\ref{4.15}) we can estimate:
	
	For \(\ell \in L_1\) with simple cloud \(\mathbf{W}(\ell) = \{W_0, \ldots, W_p\}\):
	\begin{equation}\label{4.21}
		\begin{split}
			\left| \omega \nu_{\ell_l(\ell)} \frac{\partial}{\partial \mu_l} g_{n_\ell}(\nu_\ell(\mathbf{t})) \right|
			&\lesssim \|\omega \nu_{\ell_l(\ell)}\| (768q_{n_\ell+m+1})^3 \\
			&\lesssim (768q_{n_\ell+m+1})^2 \left[ \prod_{i=0}^{p} \|\omega \nu_{\ell_{W_i}^0}\| \prod_{i=0}^{p} (768q_{n_{W_i}+m+1}) \right].
		\end{split}
	\end{equation}
	
	For \(\ell \in L_2\) with minor cloud \(\mathbf{W}_{-}(\ell) = \{W_0, \ldots, W_p\}\) and major cloud \(\mathbf{W}_{+}(\ell) = \{W_0, \ldots, W_p\}\):
	\begin{equation}\label{4.22}
		\begin{split}
			\left| \omega \nu_{\ell_l(\ell)} \omega \nu_{\ell_{l'}(\ell)} \frac{\partial^2}{\partial \mu_l \partial \mu_{l'}} g_{n_\ell}(\nu_\ell(\mathbf{t})) \right|
			&\lesssim \|\omega \nu_{\ell_l(\ell)}\| \|\omega \nu_{\ell_{l'}(\ell)}\| (768q_{n_\ell+m+1})^4 \\
			&\lesssim (768q_{n_\ell+m+1})^2 \left[ \prod_{i=0}^p \|\omega \nu_{\ell_{W_i}^0}\| \prod_{i=0}^p (768q_{n_{W_i}+m+1}) \right]^2.
		\end{split}
	\end{equation}
	
	This follows by observing that for any line $\ell \in V$, one has $n_\ell \leq n_V$, together with the estimates:
	
	\begin{equation}\label{3.27}
		\left|\frac{\partial^p}{\partial \mu^p} \chi_n(\|\omega \nu_\ell\|)\right|
		\lesssim (768 q_{n+m+1})^p, \quad p=1,2,
	\end{equation}
	
	which imply:
	
	\begin{equation}\label{3.28}
		\left|\frac{\partial^p}{\partial \mu^p} g_n(\nu_\ell)\right|
		\lesssim  (768 q_{n+m+1})^{p+2}, \quad p=0,1,2.
	\end{equation}
	
	The bounds (\ref{4.21}) and (\ref{4.22}) yield a factor
	\begin{equation}\label{4.23}
		\|\omega \nu_{\ell_{W_i}^0}\| (768q_{n_{W_i}+m+1})
	\end{equation}
	for each resonance \(W_i\) in the cloud of \(\ell\). Since each resonance belongs to the cloud of some internal line and contains either two derived lines or one doubly-derived line, we obtain the square of (\ref{4.23}) for each resonance.
	
	Taking into account that each underived propagator can be bounded using (\ref{3.28}) with \( p = 0 \), we can summarize the bounds (\ref{4.21}) and (\ref{4.22}) as follows: for each resummed tree \(\vartheta\),
	
	\begin{itemize}
		\item each resonance \( V \) contributes a factor \( \|\omega \nu_{\ell_V}\|^2 \cdot (768q_{n_V+m+1})^2 \);
		\item each line \(\ell\) contributes a factor \( (768q_{n_\ell+m+1})^2 \).
	\end{itemize}
	The presence of the factors $\|\omega \nu_{\ell_V}\|^2$ allows us to disregard the propagator corresponding to a line entering a resonance with resonance-scale $n_V^R$, provided it is replaced by a factor $(768 q_{n_V+m+1})^2$, where $n_V$ denotes the scale of the resonance when viewed as a cluster. This mechanism aligns with the reasoning leading to (\ref{2.21}).
	
	After bounding individual terms in (\ref{4.15}), it remains to control the number of terms in the sum.

	Applying Lemma~\ref{Lemma 6} yields systematic bounds on term proliferation. For a resonance \(V \in \mathbf{V}_j\) with \(j \geq 1\), denote by \(\mathcal{M}_V\) the number of \((j+1)^{\text{th}}\)-generation subresonances contained within \(V\). Let \(V_0\) represent the set of lines internal to \(V\) but external to all proper subresonances, with cardinality \(k_{V_0} = |V_0|\).
	
	
	Lemma~\ref{Lemma 6} provides the following quantitative controls:
	\begin{itemize}
		\item For first-generation resonances \( V \): at most \( l_V \) terms per sum.
		\item For resonances \( V' \in \mathbf{V}_{j+1} \) inside \( V \in \mathbf{V}_j \): at most \( k_{V_0} + \mathcal{M}_{V} \) terms.
	\end{itemize}
	
	Incorporating all generational contributions and including the summation over derived lines, we obtain the global bound:
	\begin{equation}
		\left[ \prod_{V \in \mathbf{V}_1} k_V^2 \right]
		\left[ \prod_{V \in \mathbf{V}_1} l_V^2 \right]
		\left[ \prod_{V \in \mathbf{V}} (k_{V_0} + \mathcal{M}_{V})^2 \right]
		\leq e^{6k},
	\end{equation}
	where \(k\) denotes the order of the tree \(\vartheta\).


	\vspace{2em}


		\medskip

	\end{document}